\newcommand{\N}{\mathbb{N}}
\newcommand{\R}{\mathbb{R}}
\newcommand{\C}{\mathbb{C}}
\newcommand{\eps}{\varepsilon}
\newcommand{\bddeven}{{\mathrm b, \mathrm e}}
\renewcommand{\Re}{\operatorname{Re}}
\renewcommand{\Im}{\operatorname{Im}}
\DeclareMathOperator{\sech}{\mathrm{sech}}
\numberwithin{equation}{section}
\newtheorem{thm}{Theorem}[section]
\newtheorem{lem}[thm]{Lemma}
\newtheorem{cor}[thm]{Corollary}
\newtheorem{prop}[thm]{Proposition}
\theoremstyle{remark}
\newtheorem{rk}[thm]{Remark}
\title[Solitary solutions to Euler with piecewise constant vorticity]{Solitary solutions to the steady Euler equations with piecewise constant vorticity in a channel}
\author[]{Karsten Matthies \and Jonathan Sewell \and Miles H. Wheeler}
\begin{document}

\begin{abstract}
We consider a two-dimensional, two-layer, incompressible, steady flow, with vorticity which is constant in each layer, in an infinite channel with rigid walls.
The velocity is continuous across the interface, there is no surface tension or difference in density between the two layers, and the flow is inviscid. 
Unlike in previous studies, we consider solutions which are localised perturbations rather than periodic or quasi-periodic perturbations of a background shear flow. 
We rigorously construct a curve of exact solutions and give the leading order terms in an asymptotic expansion.
We also give a thorough qualitative description of the fluid particle paths, which can include stagnation points, critical layers, and streamlines which meet the boundary.
\end{abstract}
\maketitle
\tableofcontents
\newpage

\section{Introduction}

The incompressible Euler equations have been studied for over 250 years, but even when considering time-independent, two-dimensional flows, there is still much being discovered. See \cite{bedrossian:book}, or \cite{saffman:book} for a tighter focus on the steady case.
The time-independent case is of interest both for its own sake, and to better understand possible end states of the time-dependent problem.
For instance, steady states give counterexamples to the phenomenon of \emph{invicid damping}, where solutions of the time-dependent Euler equations decay in some sense, despite energy being conserved \cite{Bedrossian:invDamp,LinZeng:NoDampNearCouette}. 

The study of solutions to the Euler equations with piecewise constant vorticity is classical. There are broadly speaking two cases:  \emph{vortex patches}, where the vorticity has compact support, and \emph{vorticity fronts}, with unbounded regions of vorticity. 
Studies of these phenomena date back to the 19th century, for instance by Rayleigh~\cite{Rayleigh:instability}, Kelvin~\cite{thomson:columnar}, and Kirchhoff~\cite{kirchhoff1897} with a more recent wave of interest in vortex patches including work such as \cite{Burbea:vortexPatches, HmidiMateuVerdera:vortexPatches, Gomez-serrano:vortexpatches,bhm:quasiperiodic, ej:patches}.
Time-dependent results have also been shown for vorticity fronts \cite{HUNTER:burgers3,hunter:burgers4,Hunter:burgers2,hmvsz:Burgers}. 

A common setting for study of the Euler equations is in a two-dimensional channel. It is shown in \cite{HamelNadirashvili:NoStagImpliesShear} that in a channel, any solution to the steady Euler equations is either a shear flow, or has a stagnation point. 
This contrasts with water waves with a free upper boundary, which exhibit many interesting solutions without stagnation~\cite{WaterWavesSurvey}. Indeed, many of the standard techniques in the classical study of water waves assume the stream function is strictly monotone in the vertical variable, so cannot allow for stagnation points. However, there has been great progress in understanding non-monotone stream functions in recent years \cite{Wahlen:critLayer,MatiocAncaVoichita:criticalLayer,walshBuhlerShatah:wind,Wheeler:Coriolis,vavaruca:gravCapWaves}, often using the weaker assumption that the vorticity can be expressed as a \emph{vorticity function} of the stream function.

We consider a two-dimensional flow in a channel with piecewise constant vorticity, and find explicit expansions of exact solutions, while thoroughly describing their qualitative behaviour.
This behaviour includes stagnation points, which complements \cite{HamelNadirashvili:NoStagImpliesShear}, and since the boundary at which the vorticity changes is free, this problem also fits in with the literature on water waves with stagnation.
However, for some values of our parameters, there are solutions for which even the weaker assumption that a vorticity function exists does not hold.
We focus on localised solutions, which have many challenges not present in the periodic case.
Reformulating our problem as an evolution equation, with the horizontal spatial direction playing the role of time, allows us to use powerful tools from spatial dynamics. These are particularly useful when working outside a periodic regime, especially a centre manifold theorem of Mielke \cite{Mielke:CMT}, which, for sufficiently small localised solutions, allows us to reduce our PDE to an ODE.

\subsection{The problem}

We consider a two-dimensional flow which is incompressible and inviscid. The flow has two layers, each with the same constant density, but with different constant vorticities.
There is no surface tension between the two layers. 
The fluid domain is infinite in width, but bounded above and below by rigid walls of constant height. 
It is also steady, that is, the wave profile moves with constant speed. In other words, if $X$ is the horizontal coordinate, and $t$ is time, a steady solution depends on $(X,t)$ only through the combination $X-ct$, for some constant wave speed $c$.
Then moving to a co-moving frame of reference gives a time-independent flow.
We use dimensionless units for distance and time, so that the channel has height 1, and the difference in vorticity between the two layers is 1.

More precisely, given real constants $\omega_0, \omega_1$ with $\omega_0 - \omega_1=1$, and $h \in (0,1)$, we seek $\eta \in C^2(\R)$ which in turn defines regions $\Omega_0=\{(X,Y) \mid 0 < Y < h + \eta(X) \}$
and $\Omega_1=\{(X,Y) \mid h + \eta(X) < Y < 1 \}$. We also seek a velocity field $U,V \in C^1(\overline{\Omega_0}) \cap C^1(\overline{\Omega_1}) \cap C^0(\overline{\Omega_0 \cup \Omega_1})$. These must satisfy
\begin{subequations}\label{eqn:stream}
  \begin{alignat}{2}
	\label{eqn:stream:divfree}
  	\partial_X U + \partial_Y V &= 0 &\qquad& \text{ in } \Omega_{0} \cup \Omega_1 \\  	
  	\label{eqn:stream:lap}
    \partial_Y U - \partial_X V &= \omega_{i} &\qquad& \text{ in } \Omega_{i} \text{ for } i=0,1 ,
  \end{alignat}
where \eqref{eqn:stream:divfree} is the incompressibility condition, and \eqref{eqn:stream:lap} enforces the piecewise constant vorticity. The vorticity of each fluid element is constant, so fluid cannot enter or leave either of $\Omega_0$ and $\Omega_1$. This gives the kinematic boundary conditions
  \begin{alignat}{2}    
    \label{eqn:stream:kintop}
    V &= 0 &\qquad& \text{ on } Y=1 \\
    \label{eqn:stream:kinbot}
    V &= 0 &\qquad& \text{ on } Y=0 \\
    \label{eqn:stream:kinint}
    \eta_X U -V &= 0 &\qquad& \text{ on } Y=h+\eta(X). 
  \end{alignat}
\end{subequations}

\begin{figure}
     \centering
     \begin{subfigure}[b]{0.55\textwidth}
         \centering
         \includegraphics[width=\textwidth]{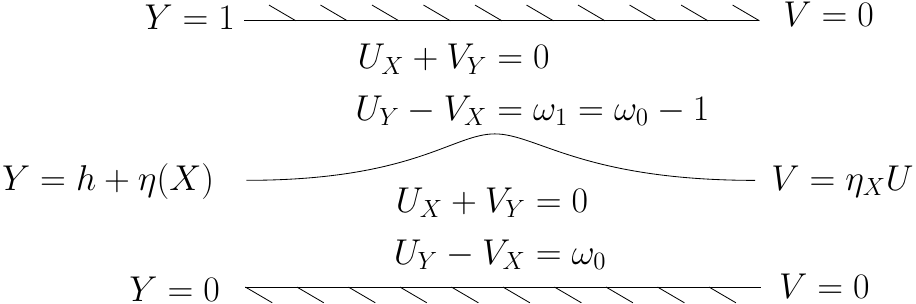}
         \caption{}
         \label{fig:setupandshear:setup}
     \end{subfigure}
     \hfill
     \begin{subfigure}[b]{0.35\textwidth}
         \centering
         \includegraphics[width=\textwidth]{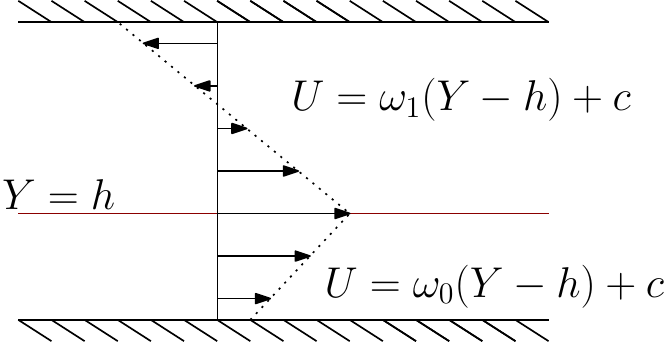}
         \caption{}
         \label{fig:setupandshear:shear}
     \end{subfigure}
        \caption{(A) The setup of the problem; see \eqref{eqn:stream} and \eqref{eqn:introShear}. (B) A shear solution in \eqref{eqn:shear}. }
        \label{fig:setupAndShear}
\end{figure}

We call $\{ (X,Y) \mid Y = h + \eta(X) \}$ the \emph{interface}. Points at which $U = V = 0$ are often of interest, and we refer to these as \emph{stagnation points}, or points at which the flow \emph{stagnates}. 
We use \emph{critical layers} to mean  curves along which the horizontal velocity $U$ vanishes  \cite{Wahlen:critLayer}. 
A solution is \emph{shear}, or a \emph{shear flow} if it has no $X$ dependence.
We say a solution is \emph{localised}, or \emph{homoclinic}, or forms a \emph{solitary wave} if $\lim_{X \to \infty} \eta(X)=\lim_{X \to -\infty} \eta(X) = 0$. A solitary wave with $\eta(X)>0$ for all $X$ is called a \emph{wave of elevation}, and one with $\eta(X)<0$ for all $X$, a \emph{wave of depression}. 

\subsection{Main results}

We construct localised solutions to \eqref{eqn:stream} by bifurcating from a shear flow. Localised solutions present many challenges that periodic solutions do not. 
For instance, since periodic solutions can be considered to have a compact domain, one has access to compact embeddings between H\"older and Sobolev spaces.
Furthermore, small-amplitude periodic solutions are a linear phenomenon, whereas small-amplitude solitary waves are weakly-nonlinear; see \eqref{eqn:ordersOfTildeF} and the surrounding discussion. 
 
We now, slightly informally, state our main result. Let the \emph{trivial solution}, which we will bifurcate from, be given by
\begin{equation}\label{eqn:introShear}
\begin{aligned}
    U(X,Y) &= 
    \begin{cases}
      \omega_0(Y-h)+ \tilde c \quad \text{for} \quad  0 \leq Y \leq h\\
        \omega_1(Y-h)+ \tilde c \quad \text{for} \quad  h \leq Y \leq 1\\
    \end{cases}
    \\
    V(X,Y)&=0\\
    \eta(X)&=0,\\
\end{aligned}
\end{equation}
where the constant $ \tilde c$ is the speed of the flow at the interface; see Figure~\ref{fig:setupAndShear}. 
\begin{thm}\label{thm:main}
Fix $N \in \N$ with $N \geq 4$, $h \in (0,1)$, and $\omega_0, \omega_1 \in \R$ such that $\omega_0 - \omega_1=1$ and
\begin{align}\label{eqn:defineTheta}
    \theta \colonequals  (3h-1)\omega_0-(3h-2)\omega_1 \neq 0.
\end{align}
Let $U^*$ be the horizontal velocity component of the shear flow \eqref{eqn:introShear} with $\tilde c=h(1-h)$.
Then, for $\eps>0$ sufficiently small, there exists a localised solution $(U^\eps, V^\eps, \eta^\eps)$ of \eqref{eqn:stream} that is close to $(U^*,0,0)$ and is approximated by the functions
\begin{equation}\label{thm:main:asym}
\begin{aligned}
    \overline{U}^\eps (X,Y) &= 
    \left\{
        \begin{alignedat}{3}
        &U^*(Y) +\eps  -(1-h)\overline{\eta}^\eps(X) & \quad \textup{for} \quad  Y \in [0, h+\overline{\eta}^\eps(X)]\\
        &U^*(Y) +\eps  +h\overline{\eta}^\eps(X) &  \quad \textup{for} \quad  Y \in [ h + \overline{\eta}^\eps(X),1] \\
      \end{alignedat}
      \right. \\
    \overline{V}^\eps(X,Y)&=\int_0^Y \overline{U}^\eps_X(X,\tilde Y) \; d \tilde Y\\
    \overline{\eta}^\eps(X) &= -\frac{3\eps}{\theta} \sech^2 \Big(\frac{\sqrt{3\eps}}{2h(1-h)} X \Big),
\end{aligned}
\end{equation}
in the sense that
\begin{equation}\label{thm:main:bounds}
    \lVert U^\eps-\overline{U}^\eps\rVert_{L^\infty} = \mathcal{O}(\eps^2),        \quad \lVert V^\eps-\overline{V}^\eps\rVert_{L^\infty} = \mathcal{O}(\eps^{\frac 52}),    \quad \lVert \eta^\eps-\overline{\eta}^\eps\rVert_{L^\infty} = \mathcal{O}(\eps^2).
\end{equation}
The velocity components $U^\eps,V^\eps$ are real-analytic away from the interface, and $\eta^\eps \in C^N(\R)$. Furthermore, $U^\eps$ and $\eta^\eps$ are even in $X$ while $V^\eps$ is odd in $X$.
\end{thm}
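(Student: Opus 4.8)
The plan is to reformulate \eqref{eqn:stream} as a first-order evolution equation in the horizontal variable $X$, treating $X$ as a time-like variable, and then to apply Mielke's centre manifold theorem \cite{Mielke:CMT}. First I would pass to the stream function $\psi$ with $U = \psi_Y$, $V = -\psi_X$, so that \eqref{eqn:stream:divfree} is automatic and \eqref{eqn:stream:lap} becomes $\Delta \psi = \omega_i$ in $\Omega_i$, with $\psi$ constant on each of $Y=0$, $Y=1$, and $Y = h + \eta(X)$. It is convenient to flatten the interface by a change of variables $(X,Y) \mapsto (X, y)$ mapping $\Omega_0 \cup \Omega_1$ to the fixed strip $0 < y < 1$ with the interface at $y = h$; the price is that the Laplacian picks up $\eta$-dependent coefficients, so the PDE becomes quasilinear but posed on a fixed domain. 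Introducing the vector $(\psi, \psi_X, \eta, \eta_X)$ (with suitable modifications to handle the jump in $\psi_{yy}$ across $y=h$ and the free-boundary condition \eqref{eqn:stream:kinint}) as the state, one writes the system as $\frac{d}{dX} \mathbf{u} = \mathcal{L} \mathbf{u} + \mathcal{N}(\mathbf{u})$ on an appropriate pair of Banach spaces, where $\mathcal{L}$ is the linearisation at the shear flow \eqref{eqn:introShear} with $\tilde c = h(1-h)$ and $\mathcal{N}$ collects the quadratic-and-higher terms.

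Next I would analyse the spectrum of $\mathcal{L}$. The reversibility of the problem (invariance under $X \mapsto -X$ together with $V \mapsto -V$) makes $\mathcal{L}$ reversible, so its spectrum is symmetric under $\lambda \mapsto -\lambda$ and under complex conjugation. The relevant bifurcation is a $0^{2}$ (non-semisimple double zero eigenvalue) resonance: at the chosen shear flow the dispersion relation has a double root at the origin, and the condition \eqref{eqn:defineTheta} that $\theta \ne 0$ should be exactly the non-degeneracy condition guaranteeing a genuine Jordan block rather than a higher-order degeneracy, and also that the rest of the spectrum stays bounded away from the imaginary axis (spectral gap). Verifying the hypotheses of Mielke's theorem — the spectral gap, the resolvent estimates on the complementary subspace, and the smoothness of $\mathcal{N}$ between the chosen spaces — is where most of the careful work lies; the resolvent bounds for the flattened, $\eta$-dependent elliptic operator are the main technical obstacle, since one must check them uniformly in the relevant strip of the complex plane.

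Granting the centre manifold theorem, the dynamics on the two-dimensional centre manifold is governed by a reversible ODE in a variable $A(X)$ of the normal form $A'' = a_1 A + a_2 A^2 + \text{(h.o.t.)}$, and I would compute $a_1, a_2$ by expanding the reduced vector field to second order, tracking how $\eps$ enters as the bifurcation parameter (so $a_1 \sim \eps$). The homoclinic orbit of the truncated normal form is the classical $\sech^2$ soliton, and matching constants reproduces $\overline\eta^\eps(X) = -\tfrac{3\eps}{\theta}\sech^2\!\big(\tfrac{\sqrt{3\eps}}{2h(1-h)}X\big)$; the persistence of a nearby genuine homoclinic for the full reduced ODE follows from the reversibility (the homoclinic is the transverse intersection of the one-dimensional stable manifold with the symmetric fixed-point subspace) plus a scaling/implicit function argument. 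The leading-order forms of $\overline U^\eps$ and $\overline V^\eps$ come from undoing the change of variables and the stream-function substitution: the $\eps$ shift is the correction to $\tilde c$, the $-(1-h)\overline\eta^\eps$ and $+h\overline\eta^\eps$ terms are the first-order response of $U$ to the displaced interface, and $\overline V^\eps = \int_0^Y \overline U^\eps_X \, d\tilde Y$ is forced by incompressibility. The error estimates \eqref{thm:main:bounds} follow by inserting the ansatz into the reduced equation, estimating the residual, and using that $\overline\eta^\eps$ has size $\eps$ and horizontal length scale $\eps^{-1/2}$ (so each $X$-derivative costs $\eps^{1/2}$, which is why the $V$ error is $\eps^{5/2}$ while the $U$ and $\eta$ errors are $\eps^2$). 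Finally, real-analyticity of $U^\eps, V^\eps$ away from the interface is elliptic regularity for $\Delta\psi = \omega_i$ with analytic (constant) right-hand side, the $C^N$ regularity of $\eta^\eps$ is read off from the centre manifold (which is $C^N$ for the $N$ fixed in advance), and the parities are inherited from the reversible structure of the reduced ODE.
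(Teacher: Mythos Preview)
Your overall architecture matches the paper's: flatten the domain, write a spatial evolution equation, verify Mielke's hypotheses for a reversible $0^2$ eigenvalue, reduce to a two-dimensional ODE, rescale to a perturbed KdV travelling-wave equation, and show persistence of the $\sech^2$ homoclinic. Two points deserve correction.

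First, the role of $\theta$. You write that $\theta\ne 0$ is the condition guaranteeing a genuine Jordan block and the spectral gap. It is neither. In the paper the $2\times 2$ Jordan block at $c_*=h(1-h)$ and the spectral gap are established purely from the dispersion relation and hold for all $\omega_0,\omega_1$ with $\omega_0-\omega_1=1$; $\theta$ never enters the linear analysis. Rather, $\theta$ is (up to an explicit constant) the coefficient of the quadratic term $a^2$ in the reduced vector field, so $\theta\ne 0$ is what makes the KdV scaling work and gives a nontrivial $\sech^2$ profile. If $\theta=0$ the leading nonlinearity is higher order and the whole asymptotic form \eqref{thm:main:asym} changes.

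Second, a minor difference in formulation: the paper does not work with the stream function as state variable but with $(u,v,\eta)$ where $u=Y_y U-\omega(y-h)-c$ and $v=V$, after a \emph{smooth} (M\"obius) flattening rather than a piecewise-linear one; the smoothness simplifies the regularity bookkeeping. Also, $\eta_X$ need not be carried as an independent state component, since the kinematic condition \eqref{eqn:stream:kinint} expresses it algebraically in terms of the other variables; the paper's state is three-component, not four. These are implementation choices rather than gaps, and your outline would go through with either set of variables.
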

In the terminology of Lin and Zeng \cite{LinZeng:NoDampNearCouette}, there are steady structures which are an arbitrarily small, but non-zero, localised perturbation from a trivial shear flow.
\begin{rk}
    Each of the solutions described in Theorem~\ref{thm:main} in fact gives rise to an infinite family of solutions through phase shifts, i.e., through transformations of the form $X \mapsto X + a$.
\end{rk}
\begin{rk}\label{rk:vortfun}
    Let $\Psi^\eps$ be the stream function (see Lemma~\ref{lem:psi}) associated with $(U^\eps,V^\eps)$. If $\omega_0<1-2h$ or $\omega_0>2-2h$, then for $\eps$ sufficiently small there does not exist a vorticity function $\gamma^\eps$ such that $\omega=\gamma^\eps(\Psi^\eps)$. If $1-2h<\omega_0<2-2h$, on the other hand, then there does exist a vorticity function. See Proposition~\ref{prop:nonStreamFunctionVorticity} for more details.
\end{rk}

We can also give a relatively complete picture of what the flow looks like, and where and how it stagnates. All the solutions constructed in Theorem~\ref{thm:main} have a unique interior stagnation point, and some have two further stagnation points on the boundary.  Much of the qualitative behaviour can be determined by the sign of $\theta$, and whether $\omega_0$ is larger or smaller than the critical value $1-h$.

\begin{thm}\label{thm:main2}
    Every non-shear solution constructed in Theorem~\ref{thm:main} has either a unique stagnation point in $\Omega_0 \cup \Omega_1$, or one stagnation point in $\Omega_0 \cup \Omega_1$ and two saddle points on the boundary closest to the interior stagnation point. 
    The uniqueness, nature and location of the interior stagnation point, and whether $\eta^\eps$ gives a wave of elevation or depression, depends only on $\theta$ and $\omega_0$, as laid out in Table~\ref{tab:solutions in regions of parameter space}. Moreover $\eta^\eps$ is strictly monotone on $X>0$ and on $X<0$. 
    The solution has a critical layer, which is unbounded if $\omega_0 \neq 1-h$, and bounded if $\omega_0=1-h$. If $\omega_0=1-h$, there is a streamline which connects the saddle points on the boundary.
    See Figure~\ref{fig:solutions in regions of parameter space}.
\end{thm}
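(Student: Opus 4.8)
The plan is to exploit the explicit leading-order formulas from Theorem~\ref{thm:main} together with the error bounds \eqref{thm:main:bounds}, and then to locate stagnation points by a perturbative argument. First I would write down the approximate velocity field $\overline U^\eps$ explicitly in each layer: in $\Omega_0$ it is $\omega_0(Y-h) + \tilde c + \eps - (1-h)\overline\eta^\eps(X)$ with $\tilde c = h(1-h)$, and similarly in $\Omega_1$. A stagnation point requires $\overline U^\eps = 0$ (to leading order), and since $\overline\eta^\eps(X)$ is a strictly monotone bijection from $(0,\infty)$ and from $(-\infty,0)$ onto appropriate intervals, the equation $\overline U^\eps = 0$ pins down $Y$ as an affine function of $\overline\eta^\eps(X)$, hence as an explicit function of $X$. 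The sign of $\omega_0 - (1-h)$ controls whether the level set $\{\overline U^\eps = 0\}$ (the approximate critical layer) lies in $\Omega_0$, in $\Omega_1$, or crosses the interface, and the sign of $\theta$ fixes whether $\overline\eta^\eps$ (hence the perturbation) pushes this layer up or down; these two signs are exactly the dichotomies in Table~\ref{tab:solutions in regions of parameter space}. On this critical layer one still needs $\overline V^\eps = 0$; since $\overline V^\eps$ is an explicit integral of $\overline U^\eps_X$, which is proportional to $\overline\eta^\eps_X(X)$, and $\overline\eta^\eps_X$ vanishes only at $X=0$, the stagnation point of the approximate flow sits on $X=0$. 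So the interior stagnation point of $\overline U^\eps,\overline V^\eps$ is unique and explicitly computable, and whether it is a centre or a saddle is read off from the Jacobian of $(\overline U^\eps, \overline V^\eps)$ there.

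Next I would upgrade this from the approximate solution to the true solution $(U^\eps, V^\eps)$. Away from the interface the velocity is real-analytic, and by \eqref{thm:main:bounds} together with interior elliptic estimates (the components are harmonic conjugates up to the affine vorticity terms, so derivatives are controlled) one gets $C^1$-closeness of $(U^\eps,V^\eps)$ to $(\overline U^\eps,\overline V^\eps)$ at the appropriate order in $\eps$. The interior stagnation point of the approximate field is a transverse (nondegenerate) zero of $(\overline U^\eps,\overline V^\eps)$ after rescaling — the Jacobian determinant is nonzero precisely because $\theta\neq0$ and, in the centre case, $\omega_0\neq 1-h$ — so the implicit function theorem (or a degree argument on a small rescaled box) produces a unique nearby zero of $(U^\eps,V^\eps)$, and its type persists. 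Uniqueness in all of $\Omega_0\cup\Omega_1$, not just near $X=0$, follows because outside a neighbourhood of $X=0$ the term $\overline\eta^\eps$ decays and $\overline U^\eps$ is bounded away from zero on the relevant horizontal strip (or else stays away from the strip), so $U^\eps$ cannot vanish there for $\eps$ small.

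For the boundary behaviour, I would evaluate $U^\eps$ on $Y=0$ and $Y=1$: on $Y=0$, $\overline U^\eps = -\omega_0 h + h(1-h) + \eps - (1-h)\overline\eta^\eps(X)$, which can vanish for some $X$ exactly when the sign conditions on $\omega_0$ relative to $1-h$ place the critical layer at the boundary, i.e.\ in the transitional case $\omega_0 = 1-h$ (and the analogous computation on $Y=1$). Since $V^\eps\equiv0$ on both walls by \eqref{eqn:stream:kintop}--\eqref{eqn:stream:kinbot}, a zero of $U^\eps$ on the wall is automatically a stagnation point; there are two of them by evenness in $X$, and computing the Jacobian (now only the $\partial_Y U, \partial_X U$ entries matter, with $\partial_Y V$ determined by incompressibility) shows they are saddles. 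When $\omega_0 = 1-h$ the critical layer of the approximate flow is the bounded curve $\{U^* + \eps - (1-h)\overline\eta^\eps = 0\}$ hitting both walls, so the true critical layer is a bounded curve joining the two boundary saddles, and it is a streamline because $\Psi^\eps$ is constant on it (the saddles lie on the same streamline). When $\omega_0\neq 1-h$ the analogous level set extends to $|X|\to\infty$ where $\overline U^\eps\to U^*+\eps$, giving an unbounded critical layer. Finally, strict monotonicity of $\eta^\eps$ on each half-line follows from that of $\overline\eta^\eps$ (whose derivative is a nonzero multiple of $\tanh\cdot\sech^2$) together with the $C^1$ error bound, since $\overline\eta^\eps_X$ is bounded away from zero away from $X=0$ and has a nondegenerate simple zero at $X=0$.

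The main obstacle I anticipate is the global uniqueness of the interior stagnation point and the careful bookkeeping of which $\eps$-order the various error terms enter at: near $X=0$ one must rescale $X \sim \eps^{-1/2}$, $\eta \sim \eps$, $U - U^* - \eps \sim \eps$, and then verify that $U^\eps - \overline U^\eps = \mathcal O(\eps^2)$ and $V^\eps - \overline V^\eps = \mathcal O(\eps^{5/2})$ are genuinely lower order than the terms whose sign we are tracking, uniformly in the rescaled box — and separately that no spurious stagnation point appears in the far field or near the walls in the generic case. The degenerate case $\omega_0 = 1-h$ needs its own treatment since the interior Jacobian picks up a zero eigenvalue direction at leading order and one must go to the next order (or use the Hamiltonian/streamline structure directly) to classify the interior point and to see the connecting streamline.
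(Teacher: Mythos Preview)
Your strategy — read everything off the explicit approximation $(\overline U^\eps,\overline V^\eps,\overline\eta^\eps)$ and then perturb using the bounds in \eqref{thm:main:bounds} — is natural but differs substantially from the paper's route, and it has genuine gaps at exactly the places where the scales are critical.

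\textbf{Monotonicity of $\eta^\eps$.} Near $X=0$ one computes $\overline\eta^\eps_X(X)\sim C\eps^2 X$, so the ``nondegenerate simple zero'' of $\overline\eta^\eps_X$ at $X=0$ has slope only of order $\eps^2$. Theorem~\ref{thm:main} states $\lVert\eta^\eps-\overline\eta^\eps\rVert_{L^\infty}=O(\eps^2)$ with no $C^1$ bound; even an $O(\eps^2)$ $C^1$ bound (which would itself need to be established) sits at the borderline scale and cannot rule out a spurious sign change of $\eta^\eps_X$ near the origin. The paper bypasses the approximation entirely: working inside the reduced ODE on the centre manifold it proves $\eta^\eps_x/b^\eps = 1+O(\eps)>0$ (Proposition~\ref{thm:SignOfetax}), and $b^\eps$ vanishes only at $x=0$ by the phase-plane structure of the homoclinic (Remark~\ref{thm:uniqe0oftildeb}). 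This is exact, not perturbative.

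\textbf{Sign of $V^\eps$ and nature of the stagnation point.} The same borderline-scale issue hits your argument for the sign of $V^\eps$ near $X=0$ (where $\overline V^\eps\sim \eps^2 X$ against an $O(\eps^{5/2})$ error), and hence the Jacobian computation at the stagnation point. The paper instead proves $\theta X\,V^\eps>0$ in the open strip by the strong maximum principle applied to the harmonic function $V^\eps$ on each half-layer (Proposition~\ref{thm:signOnV}), and then uses the Hopf lemma at $X=0$ to obtain the strict sign of $V^\eps_X(0,Y)$ needed in $\det D(U^\eps,V^\eps)$ (Theorem~\ref{thm:unbddStagnation}). Uniqueness of the interior stagnation point then falls out immediately from these exact sign statements together with $U^\eps_Y=\omega+V^\eps_X$, without any far-field/near-field splitting.

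\textbf{Critical layer versus streamline.} In the case $\omega_0=1-h$ you assert that the bounded critical layer ``is a streamline because $\Psi^\eps$ is constant on it''. This is incorrect: the critical layer is the zero set of $U^\eps=\Psi^\eps_Y$, not a level set of $\Psi^\eps$. The connecting streamline and the critical layer are distinct curves, meeting only at the two boundary saddles $(\pm X_*,1)$. The paper constructs the connecting streamline separately (Proposition~\ref{thm:attachedStreamline}) via a local analysis of $\Psi^\eps$ near $(-X_*,1)$ followed by a forward-flow argument that uses the already-established signs of $U^\eps$ and $V^\eps$ to trap the particle path until it crosses $X=0$.
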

\begin{table}[htp]
    \centering
\begin{tabular}{r|l|l|l|l}
Region & Sign of $\theta$ & Size of $\omega_0$ & Wave profile & \begin{tabular}[c]{@{}l@{}}Location and nature \\ of the stagnation point\end{tabular} \\ \hline 
(i) & $\theta<0$ & $\omega_0 < 1-h$ & Elevation & Upper layer, unique saddle \\
(ii) & $\theta<0$ & $\omega_0 = 1-h$ & Elevation & Lower layer, non-unique centre \\
(iii) & $\theta<0$ & $\omega_0 > 1-h$ & Elevation & Lower layer, unique centre \\
(iv) & $\theta>0$ & $\omega_0 < 1-h$ & Depression & Upper layer, unique centre \\
(v) & $\theta>0$ & $\omega_0 = 1-h$ & Depression & Upper layer, non-unique centre \\
(vi) & $\theta>0$ & $\omega_0 > 1-h$ & Depression & Lower layer, unique saddle \\ 
\end{tabular}
    \caption{Behaviour of solutions in different regions of parameter space; see Figure~\ref{fig:solutions in regions of parameter space}. The stagnation point being referred to is the interior stagnation point.}
    \label{tab:solutions in regions of parameter space}
\end{table}
\begin{figure}[htp]
\includegraphics[scale=1]{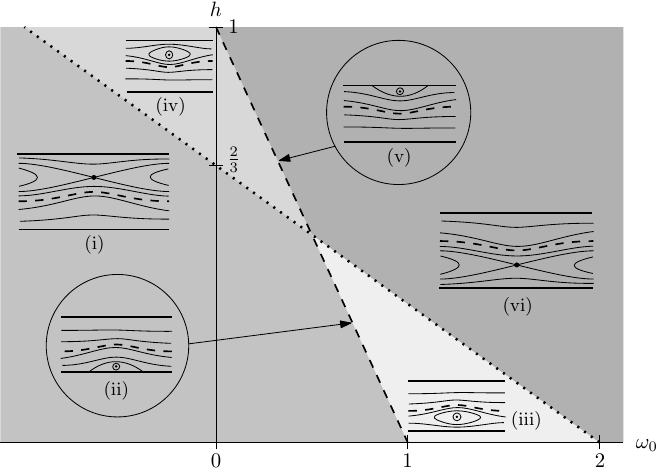}
\caption{\label{fig:solutions in regions of parameter space} Streamlines of solutions in the six different regions of parameter space from Table~\ref{tab:solutions in regions of parameter space}. The dashed streamlines correspond to the interface $Y=h+\eta(X)$. The dotted line in parameter space corresponds to $\theta=0$, we have not constructed solutions here.}
\end{figure}

\subsection{Related work}

One body of work that this problem is related to is that of travelling water waves with critical layers, as we are considering the Euler equations with a free boundary. See \cite{WaterWavesSurvey} and the references within for a general overview of the steady water waves literature, including in particular, problems with solutions exhibiting critical layers. 
Wahl\'en \cite{Wahlen:critLayer} gave the first exact construction of a rotational flow with critical layers and a free surface when considering the one-layer, constant vorticity case.  Since then, new results have taken this line of inquiry in different directions. For example, Ehrnstr\"om, Escher, Villari, and Wahl\'en \cite{EhrnströmWahlen:multipleCritlayers,EhrnströmEscherVillari:multipleCritLayers} found solutions with an arbitrary number of critical layers, with stagnation points exhibiting ``cat's eye'' structures. 
Matioc then moved away from these constant or affine vorticity functions, and considered a two-layer problem, where the two layers had distinct, constant vorticities and densities \cite{MatiocAncaVoichita:criticalLayer}.
Walsh, B\"uhler, and Shatah considered gravity-vorticity waves, \cite{walshBuhlerShatah:wind}, where they took the lower layer to be of finite depth and zero vorticity, and the upper layer to be either of infinite height and constant vorticity, or finite height and a more general vorticity.  
The study of critical layers is not limited to local perturbations, for instance, Constantin, Strauss, and V\u{a}rv\u{a}ruc\u{a} \cite{vavaruca:gravCapWaves}  considered gravity waves of constant vorticity, and used global bifurcation to construct large amplitude, overhanging waves. 
Wheeler considered a flow with an arbitrary number of layers, with waves of vorticity~ \cite{Wheeler:Coriolis}.
Whereas \cite{Wahlen:critLayer, EhrnströmWahlen:multipleCritlayers, EhrnströmEscherVillari:multipleCritLayers, vavaruca:gravCapWaves} considered the case when a vorticity function exists, we can work outside this scope.
More generally, water wave problems usually have a discontinuity in velocity across free surfaces, due to surface tension, or a change in density \cite{MatiocAncaVoichita:criticalLayer, walshBuhlerShatah:wind, Wheeler:Coriolis}. We do not have such a ``jump'' condition.

As already mentioned, our work here is also connected to invicid damping, where small smooth perturbations from a shear flow decay to a shear flow when evolved under the Euler equations. Bedrossian and Masmoudi were the first to demonstrat this phenomenon at the non-linear level \cite{Bedrossian:invDamp}.
Recently \cite{sinambela2023transition}, Sinambela and Zhao constructed less regular flows for which linear invicid damping occurs.
For a general overview see the survey \cite{inviscidDampingSurveyPaper}.
Invicid damping is an analogue of Landau damping; see \cite{Mouhot:landauDamping}.

Results on invicid damping are often complemented by results on the \emph{flexibility} or \emph{rigidity} of a flow. If a given steady solution of the Euler equations, often a shear flow, has nearby steady solutions, this is said to be flexible, if not it is rigid. If one can show flexibility near a shear flow, that is, that steady non-shear flows exist arbitrarily close to a shear flow, then invicid damping cannot hold. We go on to show that such a family of solutions exists.
Lin and Zeng \cite{LinZeng:NoDampNearCouette} showed that any periodic travelling steady solution with vorticity sufficiently close to a constant must be a shear flow. They also showed a flexibility result, that for arbitrary horizontal period, there exists steady flows in any neighbourhood of the Couette flow in $H{^\frac 52}$.
Constantin, Drivas, and Ginsburg \cite{constantin:flexibility} showed that given modest regularity assumptions, shear, steady, non-stagnating flows in a channel can be perturbed to give non-shear, steady, non stagnating flows in a perturbed channel.
Hamel and Nadirashvili \cite{HamelNadirashvili:NoStagImpliesShear} went on to show that in fact, in a channel, any steady flow which does not stagnate is a shear flow, and in the half plane, any flow which does not stagnate and has bounded velocity is a shear flow.
Zelati, Elgindi, and Widmayer consider waves of vorticity, \cite{czew}, and construct steady analytic flows arbitrarily close to a shear flow.
In \cite{sinambela2023transition}, Sinambela and Zhao considered a large family of shear flows, and constructed steady flows arbitrarily close in $H^s$ for arbitrarily large $s$. 
The recent work by Franzoi, Masmoudi, and Montalto \cite{Franzoi:quasiperiodic} constructs spatially quasiperiodic steady flows near the Couette flow. 

Most of the work mentioned so far
considered the periodic case, apart from \cite{Franzoi:quasiperiodic} which considers the quasiperiodic case.
As is often done in water wave problems, in order to consider localised solutions instead, we turn to spatial dynamics. 
This asks us to reformulate our PDE as an evolution equation, with $X$ playing the role of time, 
and turns out to yield many tools which are especially good at finding non-periodic solutions. 
One such spatial dynamical tool is a centre manifold reduction used by Kirchg\"assner~\cite{KirchgassnerJD1982}. However this can only be applied to semi-linear problems. Ours is a quasilinear problem, and for that, we need a stronger centre manifold reduction of Mielke~\cite{Mielke:CMT}. This allows us to reduce to an ODE. Note, this is an exact reduction, not an approximation; the evolution in $X$ of all sufficiently small bounded solutions to our infinite dimensional problem is confined to a two-dimensional manifold. Although we only encounter two-dimensional centre manifolds in this paper, with more than two layers we would expect to have higher dimensional manifolds such as those in~\cite{BuffoniTolandGroves:plethora}. 
There are great many papers applying spatial dynamics to water wave problems; see the surveys~\cite{groves:steadysurvey, Groves:GravCapSDsurvey} and the references therein for those focusing on gravity-capillary waves.
In one of the more classical papers, Kirchg\"assner used spatial dynamics to construct capillary-gravity waves~\cite{Kirchgassner:bifurcation}.
Groves, Toland, and Buffoni~\cite{BuffoniTolandGroves:plethora} considered the free surface problem and found solitary solutions lying on a four-dimensional manifold, from this centre manifold reduction of Mielke.
Groves and Wahl\'en constructed Stokes waves with vorticity using spatial dynamics in~\cite{GrovesWahlen:stokesWaves}.
More recently, Wang \cite{Wang:solitaryVorticityWaves} also uses this reduction, when considering a problem similar to ours, except with free upper boundary and fluids of different densities.
Kozlov, Kuznetsov, and Lokharu~\cite{KozlovV:solStag} found solitary gravity waves with vorticity and critical layers. They too considered the rotational case, and like us, they went on to find solutions with streamlines attached to the boundary.
In addition to classical spatial dynamics, recently-developed centre manifold techniques `without a phase space'~\cite{fs:centernophase,sc:tools} have been applied to problems related to water waves~\cite{tww:whitham,jtw:whitham,ChenWalshWheeler:WithoutPhaseSpace}; also see~\cite{bs:hamiltonian}. There are also many classical constructions of solitary water waves which employ fixed-point methods rather than spatial dynamics~\cite{lavrentiev:solitary, fh:solitary, beale:nashmoser, beale:ripples, sun:generalized}.

One body of work that also considers solutions to the two-dimensional Euler equations with continuous velocity but a discontinuity in vorticity is that of vortex patches. 
These are solutions of the Euler equations with compactly supported vorticity.
See the work by Burbea \cite{Burbea:vortexPatches}, by Hmidi, Mateu and Verdera \cite{HmidiMateuVerdera:vortexPatches}, by Hassainia and Wheeler \cite{wheeler:vortexPatches}, and by Castro, C\'ordoba and G\'omez-Serrano \cite{Gomez-serrano:vortexpatches}. Hassainia, Masmoudi and Wheeler \cite{HassainiaMasmoudiWheeler:catseyevortpatch} also find vortex patches with cat's eye structures, which also appear in many papers on water waves with critical layers.

A vorticity front can be thought of as a generalisation of vortex patches, where the vorticity has unbounded support.
Hunter, Moreno-Vasquez, Shu and Zhang consider a time dependent problem similar to ours, but without rigid walls \cite{hmvsz:Burgers}: they seek vorticity fronts, as do we, and they also consider perturbations from the same trivial flow, but extended to their infinite-depth domain.
It differs in that where we ultimately reduce to a perturbation of the Korteweg--De Vries equation, which is local in $x$, they reduce to equations involving a Hilbert transform, which is non-local in $x$. Furthermore, they consider the case where both layers are semi-infinite, and they consider the time-dependent case. They also find that their problem is non-dispersive. Subsequent work
\cite{Hunter:burgers2,HUNTER:burgers3,hunter:burgers4} considers an equation that approximates the evolution of the interface between vorticity fronts, and show that despite quadratic nonlinearity in the equation, the approximation holds for cubically non-linear time scales.

\subsection{Outline of the paper}
\label{sec:Outline}

Firstly, in Section~\ref{sec:prelims} we prove various preliminary results about the background shear flow, conserved quantities, and the stream function.
In Section~\ref{sec:reformulation} we change coordinates to reformulate the problem into a system of PDEs and boundary conditions on known domains. 
We then write our PDEs in the form of an evolution equation: $\frac \partial {\partial x} (u,v,\eta) = \mathcal{F}(u,v,\eta;c)$, where the only derivatives to appear in $\mathcal{F}$ are with respect to $y$. 
Finally, we write $\mathcal{F}$ as $L+ \mathcal R$, where $L$ is the Fréchet derivative of $\mathcal{F}$, and $ \mathcal R$ is the non-linear remainder term. 
In Section~\ref{sec:theCentreManifold} we prove Theorem~\ref{thm:main}. 
First, in Section~\ref{sec:L}, we verify some hypotheses about the spectrum of $L$. This allows us to apply the centre manifold theorem in Section~\ref{sec:CMT}, which lets us reduce the problem to solving an ODE rather than a PDE. 
Roughly speaking, the centre manifold theorem works by finding two things. 
The first is a \emph{reduction function}, $\psi\colon \mathcal{E}_0 \times \R \to \mathcal{W}$, where $\mathcal{W}$ is some space of functions in $y$, and $\mathcal{E}_0$ is the generalised kernel of $L$. 
Two functions, $\xi_0(y)$ and $\xi_1(y)$, span $\mathcal{E}_0$, so we often identify $\mathcal{E}_0$ with $\R^2$, and consider $\psi$ a function on $\R^3$.
The second is an ODE of the form $(a'(x),b'(x)) = F(a(x),b(x),\eps)$, where $\eps$ is a small parameter related to $c$.
These are such that if $(a,b,\eps)$ solves the ODE, then $(x,y) \mapsto a(x) \xi_0(y) + b(x) \xi_1(y) + \psi(a(x),b(x),\eps)(y)$ is a solution to our problem. 
The reduction function $\psi$ is not related to, and not to be confused with, the stream function $\Psi$.
In Section~\ref{sec:CMT} we find solutions to leading order, and prove Theorem~\ref{thm:main}.

In Section~\ref{sec:SignOfEta} we prove Theorem~\ref{thm:main2}. 
We show in Section~\ref{sec:signsofcomponents} that $\eta$ is monotone on $x>0$ and $x<0$, and that the sign of $V$ can be similarly characterised. In Sections~\ref{sec:Unbddstagnation} and~\ref{sec:bddstagnation} we show, given $h$, $\omega_0$, $\omega_1$, how and where the flow stagnates, and whether $\eta$ is deflected towards or away from this stagnation. In Section~\ref{sec:bddstagnation}, we deal with certain critical values, where we have that $U$ has the same sign in the entire flow, except for a bounded region of space which contains a stagnation point. 

In the appendix we have the details of an argument of Kirchg\"assner which we allude to in Section~\ref{sec:CMT}.

\section{Preliminaries}\label{sec:prelims}

We first introduce some notation and terminology.
Sometimes we write a function that depends on, say, $(X,Y,\omega)$. 
This unsubscripted $\omega$ should be taken to be equal to $\omega_0$ if $(X,Y) \in \Omega_0$, and equal to $\omega_1$ if $(X,Y) \in \Omega_1$. 
For an open set $\mathcal U$, then as usual $C^n( \mathcal U)$ denotes the set of $n$-times continuously differentiable functions with domain $\mathcal U$. 
By $C^n(\overline{\mathcal{U}})$, we mean the Banach space of functions which have domain $\mathcal U$, and whose derivatives up to order $n$ exist, can be continuously extended to $\overline{\mathcal{U}}$, and are bounded. To specify a codomain $\mathcal{V}$, we use the notations $C^n(\mathcal{U}, \mathcal{V})$ and $C^n(\overline{\mathcal{U}}, \mathcal{V})$ respectively.

As usual, incompressibility \eqref{eqn:stream:divfree} guarantees the existence of a \emph{stream function} $\Psi$ which is a first integral of the fluid particle motion.
\begin{lem}\label{lem:psi}
If $(U,V,\eta)$ solves \eqref{eqn:stream}, there exists $\Psi \in C^2(\overline{\Omega_0}) \cap C^2(\overline{\Omega_1}) \cap C^1(\overline{\Omega_0 \cup \Omega_1})$ such that $\Psi_X = -V$, $\Psi_Y = U$, and $\Psi=0$ on $Y=h+\eta(X)$.

\begin{proof}
Define $\Psi$ as the integral
\begin{align}
    \label{eqn:defnOfStreamfunction}
    \Psi(X,Y) = \int_{h+\eta(X)}^Y U(X, \tilde Y) \; d \tilde Y,
\end{align}
which clearly vanishes on $Y=h+\eta(X)$. Using the regularity of $U$ and $\eta$, it is straightforward to check that $\Psi \in C^0(\overline{\Omega_0 \cup \Omega_1})$ with $\Psi_Y = U$.  Differentiating \eqref{eqn:defnOfStreamfunction} with respect to $X$ and then integrating by parts using \eqref{eqn:stream:divfree}, we discover that
\begin{align*}
     \Psi_X(X,Y) &= \int_{h+\eta(X)}^Y U_X(X, \tilde Y) \; d \tilde Y
     - U(X,h+\eta(X))\eta_X(X)
     = -V(X,Y),
\end{align*}
where the boundary terms at $Y=h+\eta(X)$ cancel thanks to \eqref{eqn:stream:kinint}. The regularity $\Psi \in C^2(\overline{\Omega_0}) \cap C^2(\overline{\Omega_1}) \cap C^1(\overline{\Omega_0 \cup \Omega_1})$ follows immediately.
\end{proof}
\end{lem}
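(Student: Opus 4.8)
The plan is to construct $\Psi$ explicitly by anti-differentiating $U$ in the vertical direction, anchored at the interface so that the normalisation $\Psi = 0$ on $Y = h + \eta(X)$ holds automatically. Concretely, I would set
\begin{equation*}
  \Psi(X,Y) = \int_{h+\eta(X)}^{Y} U(X,\tilde Y)\, d\tilde Y .
\end{equation*}
This vanishes on the interface by construction, and since $U \in C^0(\overline{\Omega_0 \cup \Omega_1})$, the fundamental theorem of calculus immediately gives $\Psi \in C^0(\overline{\Omega_0 \cup \Omega_1})$ with $\Psi_Y = U$.

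The substantive point is the identity $\Psi_X = -V$. Here I would differentiate the defining integral in $X$ via the Leibniz rule, picking up an interior term $\int_{h+\eta(X)}^Y U_X(X,\tilde Y)\, d\tilde Y$ together with a boundary term $-\,U(X,h+\eta(X))\,\eta_X(X)$ from the $X$-dependence of the lower limit of integration. In the interior term I replace $U_X$ by $-V_{\tilde Y}$ using incompressibility \eqref{eqn:stream:divfree} and integrate by parts, which produces $-V(X,Y) + V(X,h+\eta(X))$. The two interface contributions then combine into $V(X,h+\eta(X)) - \eta_X(X)\,U(X,h+\eta(X))$, which is exactly the left-hand side of the kinematic condition \eqref{eqn:stream:kinint} (up to sign) and hence vanishes, leaving $\Psi_X = -V$.

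It remains to record the regularity. On each closed layer $\overline{\Omega_i}$ we have $U,V \in C^1$ and $\eta \in C^2$, so differentiating under the integral sign once more shows $\Psi \in C^2(\overline{\Omega_0}) \cap C^2(\overline{\Omega_1})$; and since $\Psi_Y = U$ and $\Psi_X = -V$ are globally continuous by the assumed regularity $U,V \in C^0(\overline{\Omega_0 \cup \Omega_1})$, the first-order derivatives of $\Psi$ match across the interface, giving $\Psi \in C^1(\overline{\Omega_0 \cup \Omega_1})$.

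The only step requiring genuine care is the Leibniz differentiation across the moving interface: the integrand $U$ is merely piecewise $C^1$ in $Y$, so one should either split the integral at $\tilde Y = h + \eta(X)$ before integrating by parts or note that $U_{\tilde Y}$ has at worst a jump discontinuity there (which is integrable), and one must invoke \eqref{eqn:stream:kinint} precisely to cancel the interface boundary terms. Everything else is routine bookkeeping.
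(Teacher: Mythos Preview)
Your proposal is correct and follows essentially the same approach as the paper: define $\Psi$ by integrating $U$ vertically from the interface, differentiate under the integral sign, and use \eqref{eqn:stream:divfree} together with \eqref{eqn:stream:kinint} to obtain $\Psi_X=-V$. Your additional remark about splitting the integral at the interface before integrating by parts is a sensible caveat that the paper leaves implicit.
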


One important symmetry this problem has is that it is \emph{reversible} in $X$. More precisely, if $(U,V,\eta)$ solves \eqref{eqn:stream}, then the functions $(\check U, \check V, \check \eta)$ defined by
\begin{align}
    \label{eqn:UVreversibility}
    \check U(X,Y) =  U(-X,Y),\quad \check V(X,Y) = -V(-X,Y),\quad
    \check \eta(X) = \eta(-X),
\end{align}
also solve \eqref{eqn:stream}. Physically, this corresponds to reflecting the particle motion across the $Y$ axis and reversing time. Note that the solutions in Theorem~\ref{thm:main} are invariant under this symmetry.

We next introduce several well-known invariants for \eqref{eqn:stream}. Since $X$ plays a time-like role, these are called `conserved quantities' in what follows.
\begin{lem}[Conserved quantities]
\label{lem:PhysicalCQs}
Let $(U,V,\eta)$ be a solution to \eqref{eqn:stream}. Then the quantities 
\begin{align*}
Q_0 &= \int_0^{h+\eta(X)} U(X,Y) \; dY \\
Q_1 &=\int_{h+\eta(X)}^1 U(X,Y) \; dY \\
S &= \int_0^1 \left(  \frac{V(X,Y)^2 - U(X,Y)^2}{2}+\omega Y  U(X,Y) \right) \; dY
\end{align*} 
are constants independent of $X$.
\end{lem}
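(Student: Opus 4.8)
The plan is to show that each of $Q_0$, $Q_1$, $S$ has vanishing $X$-derivative, differentiating under the integral sign (justified by the regularity of $U,V,\eta$) and handling the moving boundary at $Y = h + \eta(X)$ via the Leibniz rule, then using the field equations \eqref{eqn:stream:divfree}--\eqref{eqn:stream:lap} and the boundary conditions \eqref{eqn:stream:kintop}--\eqref{eqn:stream:kinint} to kill all remaining terms. For $Q_0$, differentiating gives $\frac{d}{dX}Q_0 = \int_0^{h+\eta} U_X(X,Y)\,dY + U(X,h+\eta)\,\eta_X$. Replacing $U_X = -V_Y$ using \eqref{eqn:stream:divfree} and integrating, the first term becomes $-V(X,h+\eta) + V(X,0) = -V(X,h+\eta)$ by \eqref{eqn:stream:kinbot}, and then the kinematic condition \eqref{eqn:stream:kinint} at the interface, $\eta_X U = V$ there, cancels this against the boundary term, giving $\frac{d}{dX}Q_0 = 0$. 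The computation for $Q_1$ is identical, now using \eqref{eqn:stream:kintop} at $Y=1$ instead of \eqref{eqn:stream:kinbot} at $Y=0$, with the interface term again cancelling by \eqref{eqn:stream:kinint} (the sign works out because the interface is the lower limit of the $Q_1$ integral).

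For $S$ the domain of integration is fixed, so there is no Leibniz boundary term, but one must be careful that $U,V$ are only piecewise $C^1$ and $\omega$ jumps across the interface. I would split $S = \int_0^{h+\eta} + \int_{h+\eta}^1$, differentiate each piece (picking up boundary contributions at $Y = h+\eta$ from the variable limit), and observe that since $U$ and $V$ are continuous across the interface, the integrand $\tfrac12(V^2 - U^2) + \omega Y U$ has a jump only in the $\omega Y U$ term, of size $(\omega_0 - \omega_1) Y U = Y U$ at $Y = h+\eta$; the two Leibniz boundary terms combine to $-(\omega_0-\omega_1)(h+\eta)U(X,h+\eta)\,\eta_X = -(h+\eta)U\eta_X$ at the interface. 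For the interior integrals, differentiate the integrand in $X$: $\tfrac{d}{dX}\big(\tfrac12(V^2-U^2)+\omega Y U\big) = V V_X - U U_X + \omega Y U_X$. Now use $V_X = U_Y - \omega$ from \eqref{eqn:stream:lap} and $U_X = -V_Y$ from \eqref{eqn:stream:divfree} to rewrite this as $V(U_Y - \omega) + U V_Y - \omega Y V_Y = \partial_Y(UV) - \omega V - \omega Y V_Y = \partial_Y(UV - \omega Y V)$, a perfect $Y$-derivative. Integrating over each layer and summing, the interior contribution is $\big[UV - \omega Y V\big]$ evaluated at $Y=0,1$ (which vanishes by \eqref{eqn:stream:kintop}--\eqref{eqn:stream:kinbot}) plus the interface jump of $UV - \omega Y V$; since $U,V$ are continuous there, that jump is $-(\omega_0-\omega_1)(h+\eta)V(X,h+\eta) = -(h+\eta)V(X,h+\eta)$, with sign determined by which side is the upper versus lower limit. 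Finally, the interface Leibniz term $-(h+\eta)U\eta_X$ and this interface jump term $(h+\eta)V$ cancel precisely by \eqref{eqn:stream:kinint}, giving $\frac{d}{dX}S = 0$.

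The main obstacle is purely bookkeeping: keeping the signs straight in the two Leibniz boundary contributions at the moving interface (one from each side) and matching them correctly against the jump in $\omega Y U$ and $\omega Y V$, while being careful that differentiation under the integral and the Leibniz rule are legitimate given only the piecewise $C^1$ regularity of $(U,V)$ together with $C^2$ regularity of $\eta$. No genuinely hard analysis is involved; it is a matter of organising the calculation so that every boundary term is accounted for and the cancellations via \eqref{eqn:stream:kintop}--\eqref{eqn:stream:kinint} are transparent.
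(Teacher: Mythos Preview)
Your proposal is correct and follows essentially the same approach as the paper: differentiate in $X$, use the Leibniz rule at the moving interface, substitute $U_X=-V_Y$ and $V_X=U_Y-\omega$, and cancel boundary terms via the kinematic conditions. Your organisation of the $S$ computation---recognising the integrand as the exact $Y$-derivative $\partial_Y(UV-\omega YV)$---is slightly slicker than the paper's (which integrates by parts on the $-\omega Y V_Y$ term separately before collecting $(UV)_Y$), but the content is identical; note only that the Leibniz boundary term at the interface carries sign $+(\omega_0-\omega_1)(h+\eta)U\eta_X$ rather than $-$, which then cancels against the $-(\omega_0-\omega_1)(h+\eta)V$ from the interior integration, exactly as you anticipate.
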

We call $Q_0$ the \emph{mass flux} of the lower layer and $Q_1$ the mass flux of the upper layer. The \emph{flow force} $S$ does not play a role in our later arguments, but it is included here for completeness.
\begin{proof}
First consider the mass flux of the lower layer, $Q_0$, and find that 
\begin{align*}
    \frac{d}{dX}\int_0^{h+\eta} U \; dY &= \eta_X U(X,h+\eta) + \int_0^{h+\eta} U_X \; dY\\
     &= \eta_X U(X,h+\eta) - \int_0^{h+\eta} V_Y \; dY\\
     &= \eta_X U(X,h+\eta) - V(X,h+\eta)
     = 0.
\end{align*}
The second equality is due to \eqref{eqn:stream:divfree}, and the last is due to \eqref{eqn:stream:kinint}. An almost identical proof shows the conservation of $Q_1$. 
Now consider the flow force. We see that
\begin{align*}
\frac{d S}{dX} &= \frac{d}{dX} \left( \int_0^{h+\eta} \left(  \frac{V^2 - U^2}{2}+\omega Y  U \right) \; dY +  \int_{h+\eta}^1 \left(  \frac{V^2 - U^2}{2}+\omega Y  U \right) \; dY \right) \\ 
&= \int_0^1 \big(  VV_X - UU_X+\omega Y  U_X \big) \; dY + (\omega_0-\omega_1)(h+\eta)U(X,h+\eta)\eta_X \\
&= \int_0^1 \big(  (U_Y-\omega)V + UV_Y-\omega Y V_Y \big) \; dY + (\omega_0-\omega_1)(h+\eta)U(X,h+\eta)\eta_X,
\end{align*}
where the last line uses \eqref{eqn:stream:divfree} and \eqref{eqn:stream:lap}. Performing integration by parts on the last term in the integrand, and using \eqref{eqn:stream:kinint} to eliminate the boundary terms, gives us 
\[
\frac{d S}{dX}= \int_0^1 (VU_Y + UV_Y) \; dY =\int_0^1  (UV)_Y \; dY =0.
\qedhere
\]
\end{proof}

Shear flows, that is, solutions of \eqref{eqn:stream} which do not depend on $X$, play a key role in the analysis. As we are viewing $X$ as a time-like variable, we call these solutions \emph{equilibria}. 

\begin{lem}[Equilibria]\label{lem:shear}
    All solutions to \eqref{eqn:stream} with no $X$ dependence are of the form
    \begin{equation}\label{eqn:shear}
    U=\omega(Y-h-\eta) + \tilde c, \quad V=0, \quad \eta \textup{ constant}.
\end{equation}
\begin{proof}
Suppose $(U,V,\eta)$ is an equilibrium solution to \eqref{eqn:stream}. By \eqref{eqn:stream:divfree}, \eqref{eqn:stream:kinbot}, and \eqref{eqn:stream:kintop}, we have that $V=0$. 
Furthermore, from the definition of equilibrium, $\eta$ is a constant. Finally, integrating \eqref{eqn:stream:lap} gives $U = \omega(Y-h-\eta) + \tilde c$, where $\tilde c $ is an arbitrary constant equal to the horizontal velocity at $y=h+\eta$.    
\end{proof}
\end{lem}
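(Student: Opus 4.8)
The plan is to peel off the three unknowns one at a time, using the no-$X$-dependence hypothesis to annihilate every $\partial_X$ appearing in \eqref{eqn:stream}.

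First I would dispatch $\eta$: since $\eta$ depends only on $X$, ``no $X$-dependence'' forces $\eta$ to be constant, so $\Omega_0$ and $\Omega_1$ are genuine horizontal strips and the interface $Y = h+\eta$ is a horizontal line. Next I would show $V \equiv 0$. With $\partial_X U = 0$ the incompressibility relation \eqref{eqn:stream:divfree} collapses to $\partial_Y V = 0$, so on each strip $V$ is independent of $Y$; the bottom boundary condition \eqref{eqn:stream:kinbot} at $Y=0$ then forces $V\equiv 0$ on $\overline{\Omega_0}$, and the top boundary condition \eqref{eqn:stream:kintop} at $Y=1$ (or, alternatively, continuity of $V$ across the interface) forces $V\equiv 0$ on $\overline{\Omega_1}$.

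Finally, with $V\equiv 0$, the vorticity equation \eqref{eqn:stream:lap} reduces to $\partial_Y U = \omega_i$ on $\Omega_i$, which integrates in $Y$ to $U = \omega_i(Y-h-\eta)+\tilde c_i$ for some constants $\tilde c_0,\tilde c_1$. Evaluating both expressions at the interface $Y = h+\eta$ and invoking the continuity of $U$ there — part of the standing regularity hypothesis on the velocity field — shows $\tilde c_0 = \tilde c_1$; calling this common interfacial speed $\tilde c$, we obtain $U = \omega(Y-h-\eta)+\tilde c$ as claimed. The only remaining condition, the interface kinematic condition \eqref{eqn:stream:kinint}, holds automatically since $\eta_X = 0$ and $V = 0$. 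I do not expect any genuine obstacle here; the one step meriting a line of care is the use of continuity of $U$ at the interface, which is precisely what stops the two layers from carrying independent interfacial speeds.
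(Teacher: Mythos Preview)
Your proposal is correct and follows essentially the same approach as the paper: deduce $\eta$ constant from no $X$-dependence, use incompressibility together with the wall conditions to get $V\equiv 0$, then integrate the vorticity equation for $U$. Your version is slightly more explicit than the paper's in spelling out that continuity of $U$ across the interface is what pins the two integration constants to a single $\tilde c$, and in checking the interface kinematic condition afterwards, but the argument is the same.
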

If, as in \eqref{eqn:introShear}, an equilibrium solution has $\eta = 0$, we call it a \emph{trivial} solution.
However, notice that for any general equilibrium of the form in \eqref{eqn:shear}, $h$ can be redefined to give an equilibrium of the form in \eqref{eqn:introShear}.

Many studies on steady solutions of the two-dimensional Euler equations assume the existence of a so-called `vorticity function' $\gamma$ such that the $\omega = U_Y - V_X = \gamma(\Psi)$. This then implies that the stream function $\Psi$ satisfies the semilinear equation $\Delta\Psi = \gamma(\Psi)$. Interestingly, as mentioned in Remark~\ref{rk:vortfun}, many of the solutions that we construct in Theorem~\ref{thm:main} do not possess a vorticity function.
\begin{prop}\label{prop:nonStreamFunctionVorticity}
Let $(U^*,0,0)$ be the shear solution given by $U^*=\omega(Y-h) + h(1-h)$, and let $(U,V,\eta)$ be a solution to \eqref{eqn:stream} with stream function $\Psi$ satisfying
\begin{equation}\label{eqn:vortfunbounds}
    \lVert U-U^* \rVert_{L^\infty} < \delta, 
    \qquad 
    (1+\lVert U^* \rVert_{L^\infty}) \lVert\eta \rVert_{L^\infty} <\delta
\end{equation}
for some sufficiently small $\delta$ depending only on $\omega_0$ and $h$. If $\omega_0<1-2h$ or $\omega_0>2-2h$, then there does not exist a single-valued function $\gamma$ such that $\omega=\gamma(\Psi)$. If $1-2h<\omega_0<2-2h$, then there does exist such a function.
\begin{rk}
    We focus on the shear flow $(U^*,0,0)$ as it corresponds to $\eps=0$ in Theorem~\ref{thm:main}.  
    In particular, using \eqref{thm:main:asym} and \eqref{thm:main:bounds} we see that Proposition~\ref{prop:nonStreamFunctionVorticity} applies to the solutions $(U^\varepsilon,V^\varepsilon,\eta^\varepsilon)$ in Theorem~\ref{thm:main}, provided $\omega_0 \ne 1-2h, 2-2h$ and $\varepsilon$ is sufficiently small. Similar arguments could be made for perturbations of more general shear flows.
\end{rk}
\begin{proof}[Proof of Proposition~\ref{prop:nonStreamFunctionVorticity}]
In what follows all stream functions are defined using \eqref{eqn:defnOfStreamfunction} so that they vanish at their respective interfaces. Thus the stream function corresponding to $(U^*,0,0)$ is 
\begin{equation*}
    \Psi^* = \Psi^*(Y) = \tfrac 1 2 \omega (Y-h)^2 + h(1-h)(Y-h),
\end{equation*}
and \eqref{eqn:vortfunbounds} implies the estimate
\begin{equation}\label{eqn:nonstreamest}
    \lVert \Psi^*-\Psi \rVert_{L^\infty} < \delta.
\end{equation}

First consider the case $\omega_0<1-2h$. Using \eqref{eqn:nonstreamest} and $\omega_1=\omega_0-1$, we find 
\begin{equation*}
    \Psi(0,1) < \Psi^*(1)+\delta 
    = \tfrac 12 (1-h)^2 (\omega_0 - 1 + 2h) + \delta < 0,
\end{equation*}
provided $\delta > 0$ is sufficiently small, and similarly
\begin{equation*}
    \Psi(0,0) < \Psi^*(0)+\delta
    = \tfrac 12 h^2 \omega_0 - (1-h)h^2 + \delta < - \tfrac 12 h^2 + \delta < 0.
\end{equation*}
Since $\Psi(0,h+\eta(0))=0$ by construction, the intermediate value theorem guarantees the existence of $Y_1 \in (h+\eta(0),1)$ and $Y_0 \in (0,h+\eta(0))$ with $\Psi(0,Y_0)=\Psi(0,Y_1)<0$. As $(0,Y_0) \in \Omega_0$ and $(0,Y_1) \in \Omega_1$, the corresponding vorticities $\omega(0,Y_0)=\omega_0$ and $\omega_1 = \omega(0,Y_1)$ are distinct, and hence there cannot exist a global vorticity $\gamma$ such that $\omega=\gamma(\Psi)$. The $\omega_0>2h-2$ follows by an analogous argument.

It remains to consider the case where $1-2h<\omega_0<2-2h$. We claim that $\Psi < 0$ in $\Omega_0$ and $\Psi > 0$ in $\Omega_1$. This will imply that $\omega = \gamma(\Psi)$ where $\gamma(t)=\omega_0 - H(t)$ and $H$ is the Heaviside step function.
For $\lvert Y - h \rvert \le \sqrt\delta$, these strict signs for $\Psi$ follow from the fact that $\Psi(X,\eta(X))=0$ together the uniform lower bound
\begin{align*}
    \Psi_Y(X,Y) = U(X,Y) > U^*(Y) - \delta \ge (1-h)h - \lVert \omega \rVert_{L^\infty}\sqrt \delta - \delta > 0,
\end{align*}
which holds for $\delta$ sufficiently small. For $\lvert Y - h \rvert > \sqrt\delta$, suppose first that $\omega_0 \ge 1 - h$. Then $\Psi^*$ has at most one critical point in $(0,h)$, which is a local minimum, and is strictly increasing on $(h,1)$. Thus, for $\delta>0$ sufficiently small, we have
\begin{align*}
    \max_{\R \times [0,h-\sqrt\delta]} \Psi 
    &<  \max_{[0,h-\sqrt\delta]}\Psi^* + \delta 
    < \max\{ \Psi^*(0), \Psi^*(h-\sqrt\delta)\} + \delta \\
    &\le \max\{ -\tfrac 12 h^2 + \delta, -\sqrt\delta h(1-h) + \delta(2-h)\} < 0
\end{align*}
and similarly
\begin{align*}
    \min_{\R \times [h+\sqrt\delta,1]} \Psi 
    &>  \min_{[h+\sqrt\delta,1]}\Psi^* - \delta 
    > \min\{ \Psi^*(h+\sqrt\delta), \Psi^*(1)\} - \delta \\
    &\ge \max\{ \sqrt\delta h(1-h) - \delta (1+h), \tfrac 12 (1-h)^2 - \delta \} > 0.
\end{align*}
The arguments for $\omega_0 \le 1-h$ are similar but with the role of the two layers reversed, and the claim is proved.
\end{proof}
\end{prop}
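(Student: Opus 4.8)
The plan is to reduce the whole statement to a question about the level sets of $\Psi$. Since the vorticity equals $\omega_0$ on $\Omega_0$ and $\omega_1=\omega_0-1\neq\omega_0$ on $\Omega_1$, a single-valued $\gamma$ with $\omega=\gamma(\Psi)$ exists if and only if there is \emph{no} pair of points $p_0\in\Omega_0$, $p_1\in\Omega_1$ with $\Psi(p_0)=\Psi(p_1)$. So everything comes down to comparing $\Psi$ with the explicit shear stream function. First I would compute, via \eqref{eqn:defnOfStreamfunction}, that the stream function of $(U^*,0,0)$ is the piecewise quadratic $\Psi^*(Y)=\tfrac12\omega(Y-h)^2+h(1-h)(Y-h)$, and check that the two smallness conditions in \eqref{eqn:vortfunbounds} together yield $\lVert\Psi-\Psi^*\rVert_{L^\infty}<\delta$ (the $\eta$-term in \eqref{eqn:vortfunbounds} is precisely what absorbs the shift of the lower limit of integration from $h$ to $h+\eta(X)$).

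For the non-existence half I would evaluate $\Psi^*$ at the two rigid walls: $\Psi^*(1)=\tfrac12(1-h)^2(\omega_0-1+2h)$ and $\Psi^*(0)=h^2\bigl(\tfrac12\omega_0-(1-h)\bigr)$. When $\omega_0<1-2h$ both of these are strictly negative by an amount depending only on $\omega_0,h$, so for $\delta$ small one gets $\Psi(0,0)<0$ and $\Psi(0,1)<0$; since $\Psi$ vanishes on the interface, the intermediate value theorem applied on $Y\in(0,h+\eta(0))$ and on $Y\in(h+\eta(0),1)$ produces $Y_0$ and $Y_1$ in the two respective layers with $\Psi(0,Y_0)=\Psi(0,Y_1)<0$, obstructing any $\gamma$. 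The case $\omega_0>2-2h$ is the mirror image: both wall values of $\Psi^*$ are positive and the same argument runs with all signs reversed.

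For the existence half, $1-2h<\omega_0<2-2h$, I would prove the sharper claim $\Psi<0$ on $\Omega_0$ and $\Psi>0$ on $\Omega_1$; then $\gamma(t)=\omega_0-H(t)$ with $H$ the Heaviside function works. I would split the channel into a band $|Y-h|\le\sqrt\delta$ around the (slightly shifted) interface and its complement. Inside the band, $\Psi_Y=U\ge U^*(Y)-\delta\ge h(1-h)-\lVert\omega\rVert_{L^\infty}\sqrt\delta-\delta>0$ for $\delta$ small, so $\Psi$ is strictly increasing in $Y$ and vanishes only on the interface, forcing the right signs there. Outside the band I would use $\lVert\Psi-\Psi^*\rVert_{L^\infty}<\delta$ and the shape of $\Psi^*$: its lower-layer piece is a quadratic in $Y-h$ whose unique critical point is a minimum lying inside $(0,h)$ when $\omega_0\ge1-h$ (one treats the upper layer and $\omega_0\le1-h$ symmetrically), so $\max_{[0,h-\sqrt\delta]}\Psi^*=\max\{\Psi^*(0),\Psi^*(h-\sqrt\delta)\}$, and one checks this is negative by an amount dominating $\delta$; similarly $\min_{[h+\sqrt\delta,1]}\Psi^*>0$ by a comparable amount.

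The main obstacle is that last step: one must keep the explicit bounds on $\Psi^*$ away from the interface genuinely sign-definite, uniformly in $X$, after an $O(\delta)$ perturbation — and near the edge of the band the available gap for $\Psi^*$ is only $O(\sqrt\delta)$, so the scales $\delta$ and $\sqrt\delta$ must be balanced and several endpoints ($Y=0,\,h\pm\sqrt\delta,\,1$) handled separately. This is also where excluding $\omega_0=1-2h$ and $\omega_0=2-2h$ is essential: at those values one wall value of $\Psi^*$ is exactly zero, so the perturbation can push it either way and neither conclusion is forced.
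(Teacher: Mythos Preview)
Your proposal is correct and follows essentially the same route as the paper: the same $\lVert\Psi-\Psi^*\rVert_{L^\infty}<\delta$ reduction, the same wall-value computation and intermediate-value argument for non-existence, and the same sign-claim with the $\sqrt\delta$-band versus complement split (and the $\omega_0\gtrless 1-h$ symmetry) for existence. Even the delicate point you flag about balancing the $O(\sqrt\delta)$ gap of $\Psi^*$ near the band edge against the $O(\delta)$ perturbation is exactly what the paper handles.
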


\section{Reformulation}\label{sec:reformulation}
\subsection{Pointwise flattening with a M\"obius map}\label{sec:reformulation:mobius}
One of main challenges of \eqref{eqn:stream} is that it is a free-boundary problem, i.e., the domains $\Omega_0$ and $\Omega_1$ are unknowns. 
To overcome this, we introduce new coordinates to map the problem onto a known domain, at the cost of making the PDE much more nonlinear. 

In particular, we seek a coordinate transformation which flattens interface to a straight line of constant height $h$, pointwise in $X$, and fixes the upper and lower boundaries. Thus the new independent variables $x = x(X,Y)$ and $y=y(X,Y)$ should satisfy
\[
x(X,Y)=X, \quad y(X,0)=0,\quad y(X,h+\eta(X))=h, \quad y(X,1) = 1.
\]
The obvious choice is to let $Y$ depend on $y$ in a piecewise linear fashion\footnote{Thus the new independent variables would be 
\[
	\tilde x = X , \quad 
	\tilde y = \begin{cases}
            \dfrac{hY}{h+\eta(X)} & \quad \text{for } 0 \leq Y \leq h+\eta(X)\\[3ex]
            \dfrac{(1-h)Y-\eta(X)}{1-h-\eta(X)} & \quad \text{for } h+ \eta(X) \leq Y \leq 1,
        \end{cases}\]
for instance with $\tilde u(x,y) = U(X,Y)$ and $\tilde v(x,y) = V(X,Y)$ as dependent variables.}. For our purposes, however, it is more convenient to have a globally smooth transformation, and so instead we use M\"obius functions to smoothly map $Y$ to $y$, defining
\begin{subequations}\label{eqn:coords}
  \begin{align}
	\label{eqn:coords:x} x &= X\\
	\label{eqn:coords:y} y &= \frac{h(1-(h+\eta(X)))Y}{(1-h)(h+\eta(X)) - \eta(X) Y}.
  \end{align}
  For independent variables we take
  \begin{align}
    \label{eqn:coords:u} u(x,y) &=  Y_y U(X,Y)  - \omega (y-h) - c \\
	\label{eqn:coords:v} v(x,y) &= V(X,Y),
  \end{align}
where $c$ is a parameter.
\end{subequations}
Notice that $u=v=\eta=0$ always gives a solution, in particular the trivial solution from \eqref{eqn:introShear} with $\tilde c =c$. 

We now differentiate \eqref{eqn:coords:y} to get
\begin{equation}\label{eqn:yderivatives}
\begin{aligned}
    y_X &= - \frac{\eta_{x} y (1-y)}{(h+\eta)(1-(h+\eta))}, &  y_Y &= \frac{(\eta y - \eta h + h(1-h))^2 }{h(1-h)(h+\eta)(1-(h+\eta))},  \\
	y_{XY} &= \eta_{x} \frac{(\eta y - \eta h + h(1-h))^2 (2y-1) }{h(1-h)(h+\eta)^2 (1-(h+\eta))^2}, &  y_{YY} &= \frac{2\eta (\eta y - \eta h + h(1-h))^3 }{h^2(1-h)^2(h+\eta)^2(1-(h+\eta))^2} \ ,
\end{aligned}    
\end{equation}
and rearrange and differentiate \eqref{eqn:coords:u} and \eqref{eqn:coords:v} to find
\begin{equation}\label{eqn:UVderivatives}
\begin{aligned}
  U_X &= y_{XY} (u + \omega (y-h) + c) + y_Y u_x + y_X y_Y (u_y   + \omega), & V_X &= v_x + y_X v_y,\\
  U_Y &= y_{YY} (u + \omega (y-h) + c) + y_Y^2 (u_y + \omega), & V_Y &= y_Y v_y.
\end{aligned}     
\end{equation}
\begin{subequations}\label{eqn:stream2}
Substituting \eqref{eqn:yderivatives} and \eqref{eqn:UVderivatives} into \eqref{eqn:stream:divfree}--\eqref{eqn:stream:lap} then gives
  \begin{align}
    \nonumber
  	0 &= \eta_{x}  (2y-1) (u + \omega(y-h) + c) + (h+\eta)(1-(h+\eta)) u_x 
    - \eta_{x} y (1-y) (u_y + \omega)
    \\
    \label{eqn:stream2:divfree2}
  	& \qquad + (h+\eta)(1-(h+\eta)) v_y,\\
     \nonumber
    \omega &= \frac{2 \eta (\eta y - \eta h + h(1-h))^3(u + \omega(y-h) + c)  
     + (\eta y - \eta h + h(1-h))^4 (u_y + \omega) }{h^2(1-h)^2(h+\eta)^2(1-(h+\eta))^2}  \\
  	\label{eqn:stream2:lap2}
     &\qquad - v_x + \frac{\eta_{x} y (1-y)}{(h+\eta)(1-(h+\eta))} v_y ,
  \end{align}
  for $y \in (0,h) \cup (h,1)$, while the boundary conditions \eqref{eqn:stream:kintop}--\eqref{eqn:stream:kinint} become
  \begin{alignat}{2}
    \label{eqn:stream2:kintop2}
    v &= 0 &\qquad& \text{ on } y=1, \\
    \label{eqn:stream2:kinbot2}
    v &= 0 &\qquad& \text{ on } y=0, \\
    \label{eqn:stream2:kinint2}
    \eta_x &= \frac{(h+\eta)(1-(h+\eta)) v}{h(1-h) (u+c)} &\qquad& \text{ on } y=h, 
  \end{alignat}
\end{subequations}
where $\eta \in C^2(\R)$, and $u,v \in C^1(\overline{\R \times (0,h)}) \cap C^1(\overline{\R \times (0,h)}) \cap C^0 \big(\overline{\R \times (0,1)} \big)$.

\subsection{Properties of this coordinate change}

One benefit of this change of coordinates is that there are conserved quantities related to the mass fluxes $Q_0,Q_1$ in Lemma~\ref{lem:PhysicalCQs} which have a particularly simple form. 
We let
\[ q_0 = \int_0^h u(x,y) \; dy, \qquad q_1 = \int_h^1 u(x,y) \; dy,\]
and call $q_0$ the \emph{pseudoflux} of the lower layer, and $q_1$ the pseudoflux of the upper layer. 
Notice that for solutions, these are conserved quantities, since
\begin{align*}
	\frac{d}{dx} q_0 &= \frac{d}{dx} \int_0^{h+\eta(X)} U(X,Y) Y_y  y_Y(X,Y) \; dY + \frac{d}{dx} \left( \frac{h^2 \omega_0}{2} - hc \right) \\
	&=\frac{d}{dX}\int_0^{h+\eta(X)} U(X,Y) \; dY
	= 0,
\end{align*}
and similarly for $q_1$, where in the last step we have used Lemma~\ref{lem:PhysicalCQs}.

We will ultimately restrict our attention to solutions with vanishing pseudofluxes, i.e.~to solutions satisfying the constraints
\begin{align}
    \label{eqn:lowermean}
    \int_0^h u \; dy &= 0\\
    \label{eqn:uppermean}
     \int_h^1 u \; dy &= 0.
\end{align}
The shear flows from \eqref{eqn:shear} with $\tilde c=c$ satisfy \eqref{eqn:lowermean}--\eqref{eqn:uppermean}. Furthermore, since the pseudofluxes are conserved quantities, the same is true for any flows which tend to one of these shear flows as $x \to -\infty$ or $x \to +\infty$, in particular the solitary waves constructed in Theorem~\ref{thm:main}. For a general solution $(U,V,\eta)$ in the original variables, without well-defined limits as $x \to \pm\infty$, we can always choose the value of $c$ in \eqref{eqn:coords:u} so that one of \eqref{eqn:lowermean} and \eqref{eqn:uppermean} is satisfied, but imposing both is an additional restriction.

While we do not need the flow force $S$ from Lemma~\ref{lem:PhysicalCQs} in our arguments, we nevertheless record its expression in terms of the new variables for completeness,
\[ S = \int_0^1 \bigg(\frac{1}{2}Y_y v^2 -\frac{(u - \omega(y-h) + c)^2}{2Y_y} + \frac{\omega y (h+\eta)(1-h)}{\eta(y-h)+h(1-h)}(u-\omega(y-h)+c)\bigg) \; dy. \]
Note while the pseudofluxes are related to -- but not exactly -- the mass fluxes, this is exactly the flow force, just written in the new coordinates.

The reversibility in \eqref{eqn:UVreversibility} is also preserved. That is, if $(u, v, \eta;c)$ satisfies \eqref{eqn:stream2}--\eqref{eqn:uppermean}, and we define
\begin{equation}\label{eqn:reversibility}
     \check u(x,y) = u(-x,y), \quad \check v(x,y) = -v(-x,y), \quad \check \eta(x) = \eta(-x),
\end{equation} 
then $(\check u,\check v,\check \eta;c)$ also satisfy \eqref{eqn:stream2}--\eqref{eqn:uppermean}.

Lastly, let us describe the $x$-independent solutions. In physical coordinates, with fixed $\omega$ and $h$, \eqref{eqn:stream} has a two-parameter family of equilibria given by \eqref{eqn:shear}. However, once we enforce the constraints \eqref{eqn:lowermean}--\eqref{eqn:uppermean} on the psuedofluxes, we are left with are most three equilibria.

\begin{lem}\label{thm:numberOfEquilibria}
For fixed $h,\omega,c$, the number of equilibrium solutions of \eqref{eqn:stream2}--\eqref{eqn:uppermean} is given by the number of real roots $\eta$ of
\begin{align}\label{eqn:cubicForEquilibriumEta}
    \eta(\eta^2 + \theta \eta + 2\eps) &=0,
\end{align}
where $\eps=c-h(1-h)$ and $\theta$ is defined in \eqref{eqn:defineTheta}.
\end{lem}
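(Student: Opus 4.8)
The plan is to match equilibrium solutions of \eqref{eqn:stream2}--\eqref{eqn:uppermean} with real roots of the cubic in \eqref{eqn:cubicForEquilibriumEta}, passing through the physical shear flows of Lemma~\ref{lem:shear} and then eliminating the interface speed $\tilde c$. First, for a \emph{constant} $\eta$ the change of variables \eqref{eqn:coords} is an $x$-independent diffeomorphism $Y\leftrightarrow y$; it is nondegenerate precisely when $0<h+\eta<1$, i.e.\ when the layers $\Omega_0,\Omega_1$ are non-degenerate, so the relevant range is $\eta\in(-h,1-h)$. Under this map, $x$-independent solutions $(u,v,\eta;c)$ of \eqref{eqn:stream2} correspond bijectively to $X$-independent solutions $(U,V,\eta)$ of \eqref{eqn:stream}, which by Lemma~\ref{lem:shear} are exactly the shear flows \eqref{eqn:shear}, parametrised by the constants $(\eta,\tilde c)$. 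The constraints \eqref{eqn:lowermean}--\eqref{eqn:uppermean} are the vanishing of the pseudofluxes $q_0,q_1$; using the relations between $q_i$ and the physical fluxes $Q_i$ of Lemma~\ref{lem:PhysicalCQs} (the relation for $q_0$ appears in the computations of Section~\ref{sec:reformulation}; that for $q_1$ is identical), these translate into the two conditions $Q_0=hc-\tfrac12 h^2\omega_0$ and $Q_1=(1-h)c+\tfrac12(1-h)^2\omega_1$.

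Second, substitute the shear profile $U=\omega(Y-h-\eta)+\tilde c$ into $Q_0=\int_0^{h+\eta}U\,dY$ and $Q_1=\int_{h+\eta}^1 U\,dY$, evaluate the elementary integrals, and rearrange to put the two conditions in the form
\begin{align*}
    \tilde c\,(h+\eta) &= hc + \omega_0 h\eta + \tfrac12\omega_0\eta^2, \\
    \tilde c\,(1-h-\eta) &= (1-h)c + \omega_1(1-h)\eta - \tfrac12\omega_1\eta^2 ,
\end{align*}
each linear in $\tilde c$ with coefficient $h+\eta\neq0$, respectively $1-h-\eta\neq0$. Multiplying the first equation by $1-h-\eta$, the second by $h+\eta$, and subtracting eliminates $\tilde c$ and leaves a polynomial identity in $\eta$: the constant terms cancel since $hc+(1-h)c=c$, the $\eta^1$ coefficient equals $h(1-h)(\omega_0-\omega_1)-c=h(1-h)-c=-\eps$, the $\eta^2$ coefficient collapses to $-\tfrac12\theta$ with $\theta$ as in \eqref{eqn:defineTheta}, and the $\eta^3$ coefficient is $-\tfrac12(\omega_0-\omega_1)=-\tfrac12$. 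Hence the identity is $-\tfrac12\,\eta(\eta^2+\theta\eta+2\eps)=0$, which is \eqref{eqn:cubicForEquilibriumEta}.

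For the converse, given a real root $\eta\in(-h,1-h)$ of \eqref{eqn:cubicForEquilibriumEta} one solves either of the two displayed linear equations for $\tilde c$; the two values agree precisely because $\eta$ solves the cubic, and the resulting shear flow \eqref{eqn:shear} then satisfies both flux constraints, so it is an equilibrium of \eqref{eqn:stream2}--\eqref{eqn:uppermean}. Since distinct roots give distinct interface heights $h+\eta$, and each root determines $\tilde c$ uniquely, this map is a bijection between equilibria and the admissible real roots of the cubic, which yields the claimed count.

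I expect the work to be computational rather than conceptual: one must be careful about the exact relations between the pseudofluxes and the physical fluxes (hence the exact right-hand sides $hc-\tfrac12 h^2\omega_0$ and $(1-h)c+\tfrac12(1-h)^2\omega_1$), and then push through the elimination of $\tilde c$ while checking that the $\eta^2$ coefficient is \emph{exactly} $\theta$ and the $\eta^1$ coefficient \emph{exactly} $2\eps$. The cancellation of the constant term and the collapse of those two coefficients is precisely what motivates the definition \eqref{eqn:defineTheta} of $\theta$ and the parameter $\eps=c-h(1-h)$.
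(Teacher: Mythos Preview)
Your proposal is correct and follows essentially the same route as the paper's proof: identify equilibria of \eqref{eqn:stream2} with the shear flows \eqref{eqn:shear} via Lemma~\ref{lem:shear}, impose the two pseudoflux constraints to obtain two equations linear in $\tilde c$, and eliminate $\tilde c$ to arrive at the cubic. The only cosmetic difference is that you route through the physical fluxes $Q_0,Q_1$ while the paper integrates the transformed $u$ directly; the resulting pair of equations is the same as the paper's \eqref{eqn:shearPseudoflux}, and your explicit check of the coefficients and the converse bijection is a welcome bit of extra detail.
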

\begin{rk}
    There is always one equilibrium with $\eta=0$. Supposing as in Theorem~\ref{thm:main} that $\theta \ne 0$, there is one non-zero equilibrium when $\eps = 0$, while for $0<\eps<\frac 18 \theta^2$ there are two non-zero equilibria.
\end{rk}
\begin{proof}[Proof of Lemma~\ref{thm:numberOfEquilibria}]
By Lemma~\ref{lem:shear}, any equilibrium solution of \eqref{eqn:stream2} must correspond to an equilibrium solution \eqref{eqn:shear} of \eqref{eqn:stream}. Applying the coordinate transformation \eqref{eqn:coords} we deduce that $v=0$, $\eta$ is constant, and
\begin{equation}\label{eqn:NewEquilibria}
    u=Y_y \omega(Y-h-\eta) + \tilde cY_y - \omega(y-h) - c.
\end{equation}
Enforcing \eqref{eqn:lowermean}--\eqref{eqn:uppermean}, i.e., that the pseudofluxes are zero, yields a system of algebraic equations
\begin{subequations}\label{eqn:shearPseudoflux}
\begin{align}\label{eqn:shearPseudoflux:lower}
    0 &= -\tfrac 1 2 \omega_0(h+\eta)^2 + \tilde c (h+\eta) + \tfrac 1 2 \omega_0 h^2 - hc,\\
    \label{eqn:shearPseudoflux:upper}
    0&= \tfrac 1 2 \omega_1 (1-h-\eta)^2 + \tilde c (1-h-\eta) - \tfrac 1 2 \omega_1 (1-h)^2 - c (1-h)
\end{align}
\end{subequations}
for $\eta \in (-h,1-h)$ and $\tilde c \in \R$. Either equation can be uniquely solved for $\tilde c$ as a function of $\eta$, and eliminating $\tilde{c}$ yields \eqref{eqn:cubicForEquilibriumEta} as desired.
\end{proof}

\subsection{The evolution equation}
We now reformulate \eqref{eqn:stream2} as an evolution equation in $x$. The first step is to algebraically solve \eqref{eqn:stream2} for the derivatives $u_x$, $v_x$, and $\eta_x$.
\begin{subequations}
\label{F}
  Solving the boundary condition \eqref{eqn:stream2:kinint2} for $\eta_x$ gives us
  \begin{align}
    \label{F:F3}
    \eta_x &=  \dfrac{(h+\eta)(1-(h+\eta)) v(x,h)}{h(1-h) (u(x,h) +c)}.
  \end{align}
  Solving \eqref{eqn:stream2:divfree2} for $u_x$, and substituting \eqref{F:F3} to eliminate $\eta_x$ we find
  \begin{align}
    \label{F:F1}
    u_x &=  \dfrac{  v(x,h)}{h(1-h) (u(x,h) +c)} \Big( (1-2y)(u + \omega(y-h) + c) + y(1-y) (u_y + \omega) \Big) - v_y,
  \end{align}
  while similarly rearranging \eqref{eqn:stream2:lap2} yields
  \begin{align}
    \label{F:F2}
    v_x &= \dfrac{2 \eta(\eta y - \eta h + h(1-h))^3  (u + \omega(y-h) + c) + (\eta y - \eta h + h(1-h))^4 (u_y + \omega)  }{h^2(1-h)^2(h+\eta)^2(1-(h+\eta))^2}  \nonumber \\
    & \qquad +\frac{ y (1-y)}{h(1-h)( u(x,h) + c)}v(x,h) v_y - \omega.
  \end{align}
\end{subequations}
We then abbreviate \eqref{F} as
\begin{align}\label{eqn:stream3}
  \frac{\partial}{\partial x}
  \begin{pmatrix}
    u \\ v \\ \eta 
  \end{pmatrix}
  &= \mathcal{F}(u,v,\eta;c).
\end{align}
Since the formula for $\mathcal F$ does not contain 
any $x$ derivatives, we can think of $x$ as being fixed, meaning $u$ and $v$ are functions of $y$ only, and $\eta$ is just a real number.

Another way of thinking of this change of viewpoint is that $\mathcal F$ is a map between Banach spaces of functions in $y$.
We now define these function spaces: let
\begin{equation}\label{eqn:defining function spaces}
\begin{aligned}
  \mathcal X&=\left\{ (u,v,\eta) \in L^2((0,1)) \times L^2((0,1)) \times \mathbb{R} \ \middle| \ \int_0^h u(y) \; dy= \int_h^1 u(y) \; dy= 0 \right\}\\
  \mathcal W&=\left\{ (u,v,\eta) \in H^1((0,1)) \times H_0^1((0,1)) \times \mathbb{R} \ \middle| \ \int_0^h u(y) \; dy = \int_h^1 u(y) \; dy= 0 \right\}\\
  \mathcal{U}&= \big\{ (u,v,\eta,c) \ \mid \ (u,v,\eta) \in \mathcal{W}, \ u(h) \neq -c, \ h(1-h)u(h) \neq -c(h+\eta)(1-h-\eta)    \big\}.
\end{aligned} 
\end{equation}
More plainly, $\mathcal{U}$ is the subset of $\mathcal{W} \times \R$ in which none of the denominators that appear in \eqref{F} vanish. We see that $\mathcal{U}$ is open and for any $c_* \neq 0$, $\mathcal{U}$ contains $(0,0,0,c_*)$.

\begin{lem}\label{lem:analytic}
    $\mathcal{F} \colon \mathcal U \to \mathcal X$ is analytic.
\begin{proof}
Fix $(u,v,\eta,c) \in \mathcal U$. Then $u,v \in H^1((0,1)) \subset C^0([0,1])$, and so the derivatives $u_y$ and $v_y$ are well-defined, as are the pointwise values $u(h)$, $v(0)$, $v(1)$, and $v(h)$. Comparing with \eqref{F}, we conclude that $\mathcal F(u,v,\eta;c)$ is a well-defined element of $L^2((0,1)) \times L^2((0,1)) \times \mathbb{R}$. To verify the integral conditions in the definition of $\mathcal X$, let $\mathcal F (u,v,\eta;c) = (f,g,\alpha)$. For the first integral condition, we see that
\begin{equation*} \label{eqn:XisCodomain}
\begin{aligned}
    \int_0^h f \; dy 
    &= \int_0^h \bigg( \frac{  v(h)\big( (1-2y)(u + \omega(y-h) + c) + y(1-y) (u_y + \omega) \big)}{h(1-h) (u(h) +c)}  - v_y \bigg) \; dy\\
    &= \frac{  v(h)}{h(1-h) (u(h) +c)} \int_0^h \dfrac{d}{dy} \Big(  y(1-y)(u + \omega(y-h) + c) \Big) \; dy - \int_0^h v_y  \; dy\\
    &= v(h)-v(h) =0.
\end{aligned}    
\end{equation*}
The second integral condition follows by a similar argument. 

Now we show analyticity. Pointwise multiplication by a bounded function, and differentiation are both linear and bounded from $\mathcal W$ to $\mathcal X$, therefore analytic. 
Similarly, the relevant trace maps are bounded and linear from $\mathcal U \to \C$.
As $\mathcal F$ is a composition of these analytic maps with appropriate rational functions, we conclude that it is also analytic.
\end{proof}
\end{lem}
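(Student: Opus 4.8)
The plan is to exhibit $\mathcal F$ as a finite combination --- sums, compositions, and products through bounded bilinear maps --- of maps that are each manifestly analytic, and then invoke the standard fact that this class is closed under those operations. Recall that analyticity here means that near each point the map extends to a holomorphic map on a complex neighbourhood (equivalently, is locally given by a norm-convergent power series); this class contains all bounded linear and bounded multilinear maps, contains $z \mapsto 1/z$ on $\C \setminus \{0\}$, and is stable under addition, composition, and feeding two analytic maps into a bounded bilinear map. So essentially no estimate is needed: the whole argument is bookkeeping plus one genuine computation.

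First I would record the one-dimensional Sobolev embedding $H^1((0,1)) \hookrightarrow C^0([0,1])$, which is bounded and linear, hence analytic, and which is what legitimises every pointwise operation in \eqref{F}. Composing with it, the differentiation maps $u \mapsto u_y$, $v \mapsto v_y$ (into $L^2$) and the trace maps $u \mapsto u(h)$, $v \mapsto v(0)$, $v \mapsto v(h)$, $v \mapsto v(1)$ (into $\C$) are all bounded linear, hence analytic, as is $(u,v,\eta) \mapsto u$ regarded as a $C^0([0,1])$-valued map. Pointwise multiplication is bounded bilinear from $C^0 \times C^0 \to C^0$ and from $C^0 \times L^2 \to L^2$, hence analytic; this also covers multiplication by the fixed coefficients $1-2y$ and $y(1-y)$, and, since $\eta$ enters only polynomially via the Banach-space-valued polynomial $\eta \mapsto (\eta y - \eta h + h(1-h))^k$, multiplication by all the $\eta$-dependent coefficients in \eqref{F}.

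The only non-polynomial ingredients are the reciprocals of the denominators appearing in \eqref{F}, such as $1/\bigl(h(1-h)(u(h)+c)\bigr)$ and those in \eqref{F:F2}. Each such denominator is a polynomial (in fact affine) function of the finite-dimensional data $(u(h),\eta,c)$, hence analytic as a map $\mathcal W \times \R \to \C$, and by the very definition of $\mathcal U$ it is nonvanishing on the open set $\mathcal U$; composing with the holomorphic function $z \mapsto 1/z$ on $\C \setminus \{0\}$ shows each reciprocal is analytic on $\mathcal U$. Assembling \eqref{F} from these pieces exhibits $\mathcal F$ as an analytic map $\mathcal U \to L^2((0,1)) \times L^2((0,1)) \times \R$.

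The only step that is not pure bookkeeping is checking that $\mathcal F$ in fact lands in $\mathcal X$, i.e.\ that if $\mathcal F(u,v,\eta;c)=(f,g,\alpha)$ then $\int_0^h f\,dy = \int_h^1 f\,dy = 0$. Here one uses that in \eqref{F:F1} the prefactor $v(h)/\bigl(h(1-h)(u(h)+c)\bigr)$ is independent of $y$, and that the bracketed expression is exactly $\tfrac{d}{dy}\bigl[y(1-y)(u+\omega(y-h)+c)\bigr]$; integrating over $(0,h)$ gives $h(1-h)(u(h)+c)$, so the first term of $f$ contributes precisely $v(h)$, while $\int_0^h v_y\,dy = v(h)$ since $v(0)=0$, and the two cancel. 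The integral over $(h,1)$ is identical, now using $v(1)=0$. This confirms the image lies in $\mathcal X$ and finishes the proof. I expect this last cancellation --- rather than anything about analyticity --- to be the one substantive point.
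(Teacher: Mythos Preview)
Your proof is correct and follows essentially the same approach as the paper's: both verify the integral constraints via the exact-derivative structure of the bracketed term in \eqref{F:F1}, and both establish analyticity by exhibiting $\mathcal F$ as built from bounded linear maps (differentiation, traces, Sobolev embedding), bounded bilinear multiplication, and rational functions of finite-dimensional data nonvanishing on $\mathcal U$. Your write-up is somewhat more explicit about the bookkeeping (naming the bilinear multiplications and the composition with $z\mapsto 1/z$), but the argument is the same.
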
  

We now separate $\mathcal F$ into linear and non-linear parts around $(u,v,\eta;c)=(0,0,0;c_*)$ with $c=c_* \neq 0$, i.e., around a shear solution which does not stagnate on the interface. More precisely, we define a linear mapping $L$ and nonlinear remainder $\mathcal R$ by
\begin{equation}\label{eqn:defn of L and R}
    L = (D_{u,v,\eta} \mathcal{F})(0,0,0,c_*), \quad \mathcal R(u,v,\eta,\eps) = \mathcal{F}(u,v,\eta,c_*+\eps) - L(u,v,\eta).
\end{equation}
For the moment the value of $c_*$ is unspecified, but we will end up focusing on the case $c_*=h(1-h)$. Calculating $L$ explicitly, we find
\begin{equation*}\label{eqn:introducingLexplicitly}
    L \begin{pmatrix}
    u\\
    v\\
    \eta\\
  \end{pmatrix} = 
  \begin{pmatrix}
    p(y) v(h) - v_y \\
    u_y - c_* \eta p'(y) \\
    v(h)/c_*
  \end{pmatrix}
\end{equation*}  
where the coefficient function $p$ is defined by
\[
  p(y) = \frac{ (1-2y)(\omega(y-h) + c_*) + y(1-y) \omega }{h(1-h)c_*}.
\]
As $\mathcal F$ is analytic by Lemma~\ref{lem:analytic}, $L\colon \mathcal W \to \mathcal X$ is a bounded linear operator while $\mathcal R \colon \mathcal U \to \mathcal X$ is analytic.

\section{The centre manifold}\label{sec:theCentreManifold}
We now state a centre manifold theorem. This is a result due to Mielke \cite{Mielke:CMT}; see Theorem~3.3 of \cite{HaragusIooss:CMT}, and Theorem~2.1 of \cite{MielkeAlexander1991HaLF}.
\begin{subequations}
It considers the general differential equation
\begin{equation}\label{eqn:introducingw}
    \frac{dw}{dx} = Lw + \mathcal{R}(w,\eps).
\end{equation}
In our application, the variable $w(x,y)$ corresponds to $(u(x,y),v(x,y),\eta(x))$, and $\eps=c-c_*$. 

\begin{thm}[Centre manifold theorem]\label{thm:CMT}
Let $\mathcal{X}, \mathcal{W}$ be Hilbert spaces with $\mathcal W $ continuously embedded in  $\mathcal X$.
Suppose a bounded linear operator $L \colon \mathcal W \to \mathcal X$ has spectrum $\sigma(L)$, and satisfies the following three hypotheses:
\begin{enumerate}[label=\rm(\roman*)]
    \item \label{hypothesis1} The centre spectrum $\sigma_0 = \{ z \in \sigma(L) \mid \Re(z)=0 \}$ consists only of finitely many eigenvalues, all of which have finite algebraic multiplicity.
    \item \label{hypothesis2} There exist $R>0$, $C>0$, such that for all $k \in \R$ with $|k|>R$, we have that $L-ikI \colon \mathcal X \to \mathcal X$ is invertible, and satisfies \[|k| \lVert (u,v,\eta) \rVert_ \mathcal X  \leq C \lVert (L-ikI)(u,v,\eta) \rVert _ \mathcal X.\]
    \item \label{hypothesis3} There exists $\delta>0$ such that there are no $z \in \sigma(L)$ satisfying $0< |\Re(z)|<\delta$. 
\end{enumerate}
Suppose we also have a nonlinear function $\mathcal R$ and an integer $N \geq 2$, such that there exists a neighbourhood $\mathcal{U} \subset \mathcal{W} \times \R$ of $0$ such that $\mathcal R \in C^N(\mathcal{U},\mathcal{X})$, and that
\begin{equation*}
    \mathcal{R}(0,0)=0, \quad D_w \mathcal{R}(0,0)=0.
\end{equation*}
Define $\mathcal{E}_0\subset \mathcal{W}$ to be the generalised eigenspace corresponding to the purely imaginary eigenvalues of $L$. Let $P_0$ be a continuous projection onto $\mathcal{E}_0$ which commutes with $L$, and whose kernel is called $\mathcal{E}_h$.

Then there exists a map $\psi \in C^N (\mathcal{E}_0 \times \R ,\mathcal{E}_h)$ with
\begin{equation}\label{eqn:psiAt0}
    \psi(0,0)=0, \quad D_w \psi(0,0)=0,
\end{equation}
and a neighbourhood $\mathcal{V}_w \times \mathcal{V}_\eps$ of $(0,0)$ in $\mathcal{E}_0 \times \R$, such that for $\eps \in \mathcal{V}_\eps$, the manifold 
\begin{equation}
    \mathcal{M}_0(\eps) = \{ w_0 + \psi(w_0,\eps) \mid w_0 \in \mathcal{V}_w\}
\end{equation}  
has the following properties:
\begin{enumerate}[label=\rm(\alph*)]
    \item $\mathcal{M}_0(\eps)$ is locally invariant,  i.e., if $w$ is a solution of \eqref{eqn:introducingw} satisfying $w(0) \in \mathcal{M}_0(\eps) \cap \mathcal{V}_w$ and $w(x) \in \mathcal{V}_w$ for all $x \in [0, \hat{x} ]$, then $w(x) \in \mathcal{M}_0(\eps)$ for all $x \in [0, \hat{x} ]$.
    \item $\mathcal{M}_0(\eps)$ contains the set of bounded solutions of \eqref{eqn:introducingw} staying in $\mathcal{V}_w$ for all $x \in \R$, i.e., if $w$ is a solution of \eqref{eqn:introducingw} satisfying $w(x) \in \mathcal{V}_w$ for all $x\in \R$, then $w(0) \in \mathcal{M}_0(\eps)$.
    \item\label{thm:CMT:conclusion:reduced solutions} Suppose $w_0$ satisfies the \emph{reduced equation} 
    \begin{equation}\label{eqn:reducedEqn}
        \frac{ d w_0}{dx}=P_0\big(Lw_0 + L \psi(w_0,\eps) +\mathcal{R}(w_0+\psi(w_0,\eps),\eps)\big),
    \end{equation}
    and $w_0(x) \in \mathcal{V}_w \cap \mathcal{E}_0$ for all $x$. Then $w_0+\psi(w_0,\eps)$ is a solution to the full problem, i.e., satisfies \eqref{eqn:introducingw}.
\end{enumerate}
\end{thm}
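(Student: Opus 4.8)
This is Mielke's quasilinear centre manifold theorem; a complete proof is in \cite{Mielke:CMT}, and in the form stated here in \cite{HaragusIooss:CMT, MielkeAlexander1991HaLF}, so I will only sketch the strategy. The essential difficulty, absent from the semilinear theory, is that $\mathcal{R}$ maps the small space $\mathcal{W}$ into the larger space $\mathcal{X}$ — a loss of one derivative — and the scheme is made to work by the resolvent smoothing built into hypothesis~\ref{hypothesis2}. The first step is the spectral decomposition. By hypothesis~\ref{hypothesis1} the centre spectrum $\sigma_0$ is a finite set of eigenvalues of finite algebraic multiplicity, so the Riesz projection
\[
P_0 = \frac{1}{2\pi i}\oint_\Gamma (zI - L)^{-1}\,dz,
\]
with $\Gamma$ a small contour enclosing $\sigma_0$ and nothing else, is a bounded projection onto the finite-dimensional space $\mathcal{E}_0$, commutes with $L$, and has complement $\mathcal{E}_h = \ker P_0$. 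Writing $L_0 = L|_{\mathcal{E}_0}$ (a finite matrix with purely imaginary spectrum) and $L_h = L|_{\mathcal{E}_h}$, hypotheses~\ref{hypothesis1} and~\ref{hypothesis3} ensure $\sigma(L_h)$ avoids the strip $\{|\Re z| < \delta\}$ and the imaginary axis. Combining this spectral gap with the resolvent bound of hypothesis~\ref{hypothesis2} (estimates on the imaginary axis outside a compact set, plus compactness of the remaining piece) and deforming contours, one constructs a ``hyperbolic Green's function'' $G_h(x)\colon\mathcal{X}\to\mathcal{W}$, built from the stable and unstable parts of $L_h$, with $\|G_h(x)\|_{\mathcal{X}\to\mathcal{W}}\le Ce^{-\delta|x|}$ for $|x|\ge 1$ and an integrable singularity $\|G_h(x)\|_{\mathcal{X}\to\mathcal{W}}\le C|x|^{\sigma-1}$ as $x\to 0$, for any fixed $\sigma\in(0,1)$; producing this integrable singularity is where the gain $|k|\,\|(u,v,\eta)\|_{\mathcal{X}}\le C\|(L-ikI)(u,v,\eta)\|_{\mathcal{X}}$ is spent.

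Next comes the Lyapunov--Perron fixed point. Fix a smooth cut-off $\chi$ equal to $1$ near $0$ and supported in $(-1,1)$, and replace $\mathcal{R}$ by $\mathcal{R}_\rho(w,\eps) = \chi(\|P_0w\|/\rho)\,\chi(\|P_hw\|/\rho)\,\mathcal{R}(w,\eps)$; since $\mathcal{R}(0,0)=0$ and $D_w\mathcal{R}(0,0)=0$, for $\rho$ and $|\eps|$ small $\mathcal{R}_\rho\colon\mathcal{W}\to\mathcal{X}$ is globally Lipschitz with arbitrarily small Lipschitz constant. For $w_0\in\mathcal{E}_0$ one looks for the unique solution of $\tfrac{dw}{dx}=Lw+\mathcal{R}_\rho(w,\eps)$ that is bounded in a weighted norm $\sup_x e^{-\mu|x|}\|w(x)\|_\mathcal{W}$ with $0<\mu<\delta$ and has $P_0w(0)=w_0$, as the fixed point of
\[
(\mathcal{T}w)(x) = e^{xL_0}w_0 + \int_0^x e^{(x-s)L_0}P_0\mathcal{R}_\rho(w(s),\eps)\,ds + \int_{\mathbb{R}} G_h(x-s)\,P_h\mathcal{R}_\rho(w(s),\eps)\,ds.
\]
The integrability of $G_h$ at $0$ together with its exponential decay shows $\mathcal{T}$ maps the weighted $\mathcal{W}$-valued space to itself, and the small Lipschitz constant makes it a contraction. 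Its fixed point $w(\,\cdot\,;w_0,\eps)$ depends on the parameters, and one sets $\psi(w_0,\eps) \colonequals P_h w(0;w_0,\eps)\in\mathcal{E}_h$; the normalisation \eqref{eqn:psiAt0} follows because $\mathcal{R}_\rho$ vanishes to second order at the origin.

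Finally I would prove the $C^N$ regularity of $\psi$ and the three conclusions. Smoothness comes from differentiating the fixed-point equation, either via the fibre contraction theorem applied to the formal derivatives of $\mathcal{T}$, or via contractions on a scale of weighted spaces with slightly different exponential rates to absorb the derivative loss in the parameter dependence, using $\mathcal{R}\in C^N$ and smoothness of $\chi$. Local invariance (a) is uniqueness of the bounded solution through a given centre datum; property (b) holds because any solution staying in $\mathcal{V}_w$ for all $x\in\mathbb{R}$ is, after cut-off, a fixed point of $\mathcal{T}$ with $w_0=P_0w(0)$, hence lies on $\mathcal{M}_0(\eps)$; and (c) follows by applying $P_0$ to \eqref{eqn:introducingw} along $w = w_0+\psi(w_0,\eps)$, where the hyperbolic component is slaved to $w_0$, so the centre dynamics is exactly \eqref{eqn:reducedEqn}, and conversely any small solution of \eqref{eqn:reducedEqn} lifts to a genuine solution of \eqref{eqn:introducingw} (the cut-off being inactive there).

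The main obstacle is precisely the quasilinear derivative loss: one must run a fixed-point argument whose natural function space is $\mathcal{W}$ while the nonlinearity only lands in $\mathcal{X}$, and the whole construction hinges on extracting the integrable $x\to0$ singularity of $G_h$ from the smoothing estimate in hypothesis~\ref{hypothesis2} and verifying that $\mathcal{T}$ genuinely preserves the weighted $\mathcal{W}$-valued space. Everything else is a careful but routine adaptation of the classical Lyapunov--Perron construction.
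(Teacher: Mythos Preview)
Your proposal is appropriate: the paper does not prove this theorem at all but simply cites it as a known result due to Mielke \cite{Mielke:CMT}, pointing also to \cite{HaragusIooss:CMT} and \cite{MielkeAlexander1991HaLF}. Your sketch of the Lyapunov--Perron construction, with the key emphasis on extracting resolvent smoothing from hypothesis~\ref{hypothesis2} to handle the quasilinear derivative loss, is a faithful outline of Mielke's argument and goes beyond what the paper itself provides.
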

\end{subequations}

\subsection{Analysis of the linearised operator}\label{sec:L}

We show that the linear operator $L$ defined in \eqref{eqn:defn of L and R} satisfies Hypotheses~\ref{hypothesis1}--\ref{hypothesis3} of Theorem~\ref{thm:CMT}.

Finding the spectrum of $L$ is made easier by the fact that $L$ is Fredholm. To show this, we first consider the slightly simpler operator $\tilde L$ given by 
\begin{align*}
  \tilde L\begin{pmatrix}
    u\\
    v\\
    \eta\\
  \end{pmatrix} &= L\begin{pmatrix}
    u\\
    v\\
    \eta\\
  \end{pmatrix} + 
  \begin{pmatrix}
    0 \\
    (c_*p'-1) \eta  \\
    0\\
  \end{pmatrix}.
\end{align*}
\begin{lem}
$\tilde L: \mathcal{W} \to \mathcal{X}$ is invertible.
\begin{proof}
First we show $\tilde L$ has trivial kernel. This is not a particularly complicated calculation, but variants of it will appear frequently, so this will serve as a simple example. 
Suppose $(u,v,\eta) \in \mathcal W$ is in the kernel of $\tilde L$. This means we seek continuous functions of $y$, namely $u$ and $v$, and a real number $\eta$ satisfying
\begin{subequations}\label{eqn:tildeL}
\begin{alignat}{1}
    \label{eqn:tildeL:tildeL1}
    p(y)v(h) - v_y &=0 \quad \text{ for } y \in (0,h) \cup (h,1)\\
    \label{eqn:tildeL:tildeL2}
    u_y-\eta&=0 \quad \text{ for } y \in (0,h) \cup (h,1)\\
    \label{eqn:tildeL:tildeL3}
    \frac{v(h)}{c_*}&=0\\
    \label{eqn:tildeL:tildeLbc1}
    v(0)=v(1)&=0\\
    \label{eqn:tildeL:tildeLbc2}
    \int_0^h u = \int_h^1 u &= 0, 
\end{alignat}
\end{subequations}
where \eqref{eqn:tildeL:tildeL1}--\eqref{eqn:tildeL:tildeL3} are just $\tilde L (u,v,\eta) = 0$ rewritten while \eqref{eqn:tildeL:tildeLbc1}--\eqref{eqn:tildeL:tildeLbc2} and the continuity of $u$ and $v$ are imposed by $(u,v,\eta) \in \mathcal W$.

The equations \eqref{eqn:tildeL:tildeL3} and \eqref{eqn:tildeL:tildeL1} imply that $v$ is constant on each of $(0,h)$ and $(h,1)$, so then \eqref{eqn:tildeL:tildeLbc1} gives that $v=0$.
Solving \eqref{eqn:tildeL:tildeL2} and appealing to the continuity of $u$ at $h$ yields
\begin{align*}
    u = \begin{cases}
            \eta y + C_0 &  0 \leq y \leq h\\
            \eta y + C_0 &  h \leq y \leq 1,
        \end{cases}
\end{align*}
for some constant $C_0$. Inserting this into the constraint \eqref{eqn:tildeL:tildeLbc2} yields $\eta = C_0 = 0$ and hence $u=0$, 
so that $(u,v,\eta) = (0,0,0)$ as desired.

Notice also that $\tilde L$ is of full range. In particular, the solution to $\tilde L (u,v,\eta) = (f,g,\alpha)$ can be shown by a direct calculation, which we leave to the reader, to be
\begin{align*}
u(y)&= \left(\frac 2 h \int_0^h G(\tilde y) \; d \tilde y - \frac{2}{1-h} \int_h^1 G(\tilde y) \; d \tilde y \right) (y-h) \\
& \qquad+ G(y) - \frac{1-h}{h} \int_0^h G(\tilde y) \; d\tilde y - \frac{h}{1-h} \int_h^1 G(\tilde y) \; d\tilde y \\
v(y)&= \int_0^y \alpha c_* p(\tilde y) -f(\tilde y) \; d \tilde y \\
\eta &= \frac 2 h \int_0^h G(\tilde y) \; d\tilde y - \frac{2}{1-h} \int_h^1 G(\tilde y) \; d\tilde y \qquad \text{where} \qquad G(y) = \int_0^y g(\tilde y) \; d\tilde y.
\end{align*}
It can also be shown straightforwardly that this $(u,v,\eta) \in \mathcal W$.
Therefore, $\tilde L $ is invertible as desired.
\end{proof}
\end{lem}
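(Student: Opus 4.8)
The plan is to show that $\tilde L$ is a bounded linear bijection $\mathcal W \to \mathcal X$ and then invoke the bounded inverse theorem. Boundedness is immediate: $L$ is bounded and the correction $(u,v,\eta) \mapsto (0,(c_*p'-1)\eta,0)$ is bounded $\mathcal W \to \mathcal X$ since $p$ is a polynomial. So the real content is to solve
\[
  p(y)v(h) - v_y = f, \qquad u_y - \eta = g, \qquad v(h) = c_*\alpha
\]
uniquely for $(u,v,\eta) \in \mathcal W$ given an arbitrary $(f,g,\alpha) \in \mathcal X$; unique solvability yields injectivity and surjectivity at once. (One can equally dispatch injectivity first by setting $(f,g,\alpha)=0$, which is worth recording separately as a template for the kernel computations that recur later in the paper.)

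For the $v$-component, the first and third equations force $v_y = c_*\alpha\, p(y) - f(y)$, so with $v(0)=0$ (from $v \in H_0^1$) we are obliged to take $v(y) = \int_0^y \big(c_*\alpha\, p(\tilde y) - f(\tilde y)\big)\,d\tilde y$, which lies in $H^1$ automatically. It then remains to check the two constraints $v(h) = c_*\alpha$ and $v(1) = 0$. Since $(f,g,\alpha)\in\mathcal X$ we have $\int_0^h f = \int_h^1 f = 0$, so both reduce to the identities
\[
  \int_0^h p(y)\,dy = 1, \qquad \int_0^1 p(y)\,dy = 0 ,
\]
which are short direct computations from $p(y) = \big((1-2y)(\omega(y-h)+c_*) + y(1-y)\omega\big)/(h(1-h)c_*)$; the first in particular produces exactly the factor $h(1-h)c_*$ cancelling the denominator. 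With $v$ determined, the second equation gives $u$ as an antiderivative of $g+\eta$, continuous at $h$ because $u \in H^1(0,1)$, so $u(y) = \int_0^y g(\tilde y)\,d\tilde y + \eta y + C$ with two free constants $\eta, C$; imposing $\int_0^h u = \int_h^1 u = 0$ produces a $2\times2$ linear system for $(\eta, C)$ whose determinant is $-\tfrac12 h(1-h) \neq 0$, so $(\eta,C)$, and hence $u$, is uniquely determined. Reading off the resulting formula exhibits $\tilde L^{-1}$ explicitly and shows it maps into $\mathcal W$.

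The only genuine subtlety is bookkeeping together with the two compatibility identities for $p$: without $\int_0^h p = 1$ and $\int_0^1 p = 0$ the candidate $v$ would fail either the endpoint condition $v(1)=0$ or the third equation $v(h)=c_*\alpha$, so $\tilde L$ would not be surjective — this is the point where the precise normalisation built into $p$ (hence into the coordinate change) is used. One must also verify that the constructed triple genuinely lies in $\mathcal W$ (the $H_0^1$ endpoint conditions on $v$, the $H^1$ regularity and the mean-zero conditions on $u$). Once unique solvability is in hand, invertibility of $\tilde L$ as a bounded operator follows from the open mapping theorem, or simply by noting boundedness of $\tilde L^{-1}$ directly from the explicit formula.
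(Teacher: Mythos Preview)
Your proof is correct and follows essentially the same route as the paper: both solve $\tilde L(u,v,\eta)=(f,g,\alpha)$ explicitly, arriving at the same formulas for $v$, $u$, and $\eta$. The paper separates the argument into injectivity (kernel computation) and surjectivity (explicit solution, details left to the reader), whereas you fold both into unique solvability; you are also more explicit than the paper in verifying the compatibility identities $\int_0^h p = 1$ and $\int_0^1 p = 0$ (which follow from $p = \tfrac{1}{h(1-h)c_*}\tfrac{d}{dy}[y(1-y)(\omega(y-h)+c_*)]$) needed for the constructed $v$ to satisfy $v(h)=c_*\alpha$ and $v(1)=0$.
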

\begin{cor}
The spectrum of $L$ consists only of eigenvalues.
\begin{proof}
We have just shown $\tilde L$ is invertible, and hence in particular that it is Fredholm with index 0.
Therefore, since 
$ L -\tilde L $ has finite-dimensional range, and therefore is a compact operator, $L$ is also Fredholm of index 0.
Similarly, $L-zI$ is Fredholm of index 0 for all $z \in \C$.
Therefore $L-zI$ is injective if and only if it is surjective. This means the spectrum of $L$ is exactly those $z \in \C$ for which $L-zI$ has non-trivial kernel.
\end{proof}
\end{cor}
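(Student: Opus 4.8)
The plan is to establish that $L$ is a Fredholm operator of index zero, after which the claim is a routine consequence of the Fredholm alternative.

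First I would invoke the preceding lemma, which shows that $\tilde L\colon \mathcal W \to \mathcal X$ is invertible, hence Fredholm of index $0$. Since $L$ and $\tilde L$ differ only by the map $(u,v,\eta)\mapsto (0,(c_*p'-1)\eta,0)$, whose range lies in a one-dimensional subspace, $L-\tilde L$ has finite rank and is in particular compact. As the Fredholm property and the index are stable under compact perturbations, $L\colon\mathcal W\to\mathcal X$ is also Fredholm of index $0$.

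Next I would note that the inclusion $\mathcal W\hookrightarrow\mathcal X$ is compact: this follows from the Rellich--Kondrachov theorem, since $\mathcal W$ is a closed subspace of $H^1((0,1))\times H^1_0((0,1))\times\R$, $\mathcal X$ a closed subspace of $L^2((0,1))\times L^2((0,1))\times\R$, and $H^1((0,1))\hookrightarrow L^2((0,1))$ is compact. Hence for every $z\in\C$ the operator $zI\colon\mathcal W\to\mathcal X$ is compact, so $L-zI$ again is Fredholm of index $0$.

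Finally, fix $z\in\sigma(L)$, so that $L-zI$ is not boundedly invertible. If $\ker(L-zI)=\{0\}$, then index zero forces the cokernel to vanish, so $L-zI$ is a continuous bijection $\mathcal W\to\mathcal X$ and thus has a bounded inverse by the open mapping theorem, contradicting $z\in\sigma(L)$. Therefore $\ker(L-zI)\ne\{0\}$, i.e.\ $z$ is an eigenvalue, and $\sigma(L)$ consists only of eigenvalues. I anticipate no genuine difficulty here: the one substantive input, invertibility of $\tilde L$, has already been carried out in the previous lemma, and the rest is the standard compact-perturbation/Fredholm-alternative bookkeeping, the only further ingredient being the Rellich compactness of $\mathcal W\hookrightarrow\mathcal X$.
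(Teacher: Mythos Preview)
Your proposal is correct and follows essentially the same argument as the paper: use the invertibility of $\tilde L$ together with the finite-rank difference $L-\tilde L$ to get Fredholm index zero, then perturb by the compact shift $zI$ and apply the Fredholm alternative. If anything, you supply a bit more detail than the paper, which simply writes ``Similarly'' for the step $L-zI$ Fredholm, whereas you explicitly invoke Rellich--Kondrachov for the compactness of the inclusion $\mathcal W\hookrightarrow\mathcal X$.
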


We are now ready to verify Hypothesis~\ref{hypothesis1} of Theorem~\ref{thm:CMT}. Writing $z=ik$, we ask for which $k\in \mathbb{C}$ the operator $L-ikI$ has non-trivial kernel. The answer turns out to be largely captured by the \emph{dispersion relation}:
\begin{equation}\label{eqn:DispersionRelation}
\begin{aligned}
    \frac{1}{c_*} &= \mathfrak d(k),\\
    \text{where } \quad \mathfrak{d}(k) &= k \big( \coth (kh) + \coth ((1-h)k) \big).
\end{aligned}
\end{equation}
However, not all eigenvalues are given by solutions to \eqref{eqn:DispersionRelation}. If
\begin{equation}\label{eqn:BadDispersionRelation}
    \sin (hz) = \sin ((1-h)z) = 0, \ z \neq 0,
\end{equation}
then $L-zI$ has non-trivial kernel, no matter the values of $c_*, \omega_0, \omega_1$. We are primarily concerned with \eqref{eqn:DispersionRelation} for two reasons. 
Firstly, \eqref{eqn:BadDispersionRelation} is very rarely satisfied; in particular, it requires $h$ to be rational, which generically is not true.
Secondly, even if there exists a $z_*$ satisfying \eqref{eqn:BadDispersionRelation}, this does not contradict our hypotheses. This is because this $z_*$ must be real with $|z_*|> \pi$, and the hypotheses are only concerned with eigenvalues on, or arbitrarily close to, the imaginary axis.

\begin{lem}\label{lem:NonzeroEvalues}
If $k \in \C \setminus \{0\}$ and $L-ikI$ has non-trivial kernel, then $k$ satisfies \eqref{eqn:DispersionRelation} or \eqref{eqn:BadDispersionRelation}. 
\begin{proof}
Suppose $(u,v,\eta) \neq 0$ solves $(L-ikI)(u,v,\eta)=0$.
Since $v_y=v(h)p(y)-iku$, and $u$ is in $H^1$, we have that $v$ is twice differentiable.
Substitution yields the second order ODE $v_{yy} - k^2v =0$, which has general solution
\begin{subequations}\label{eqn:periodicKernel}
\begin{align}\label{eqn:periodicKernel:v}
v= \begin{cases}
            A \sinh( ky) & \text{for} \quad 0 \leq y \leq h \\
            B \sinh( k(1-y)) & \text{for} \quad h \leq y \leq 1,
    \end{cases}
\end{align}
for some constants $A$ and $B$.
Therefore, inserting the formula for $v$ into the first component of $(L-ikI)(u,v,\eta)=0$ yields
\label{eqn:periodicKernel:u}\begin{align}
u= \begin{cases}
            c_*\eta p + iA \cosh (ky) & \quad \text{for} \quad 0 \leq y \leq h \\
            c_*\eta p - iB \cosh (k(1-y)) & \quad \text{for} \quad h \leq y \leq 1.
    \end{cases}
\end{align}
\end{subequations}
The conditions on $v(h)$ and the continuity of $u$ give the equations
\begin{subequations}\label{eqn:eignevalueBC}
\begin{alignat}{1}
    \label{eqn:eignevalueBC:A} 
    A \sinh( kh) &= ikc_*\eta\\
    \label{eqn:eignevalueBC:B}
    B \sinh( k(1-h)) &= ikc_*\eta\\ \label{eqn:eignevalueBC:dispersion}
    A \cosh( kh) + B \cosh( k(1-h)) &= i \eta .
\end{alignat}
\end{subequations}

Suppose first that $\eta=0$.
By \eqref{eqn:eignevalueBC}, $A=0$ if and only if $B=0$. Since we want a non-trivial solution, we must have $A \neq 0, B \neq 0$. 
Therefore $ikh=n\pi$ and $ik(1-h) = m\pi$ for some $m,n \in \N$, meaning $h$ must be rational, and $ik=(m+n)\pi$. By assumption $k \neq 0$, so we infer $m+n \neq 0$. 
These, with \eqref{eqn:eignevalueBC:dispersion} means that $(-1)^n A + (-1)^m B=0$. Together these give us that
\begin{align*}
v= A \sinh (ky) \quad \text{for } y \in [0,1].
\end{align*}
Therefore, we have a one-dimensional kernel spanned by
\begin{align*}
u&= \cos(iky)\qquad v= -\sin(iky)\qquad \eta =0.
\end{align*}
These are precisely the eigenvalues which correspond to \eqref{eqn:BadDispersionRelation}, rather than \eqref{eqn:DispersionRelation}.

Now suppose $\eta \neq 0$, and $(u,v,\eta) \neq 0$ solves $(L-ikI)(u,v,\eta)=0$. Examining \eqref{eqn:eignevalueBC:A} and \eqref{eqn:eignevalueBC:B}, we see that $\sinh(kh)$ and $\sinh(k(1-h))$ are both non-zero, so \eqref{eqn:eignevalueBC:A} and \eqref{eqn:eignevalueBC:B} can be solved for $A$ and $B$. Subsituting these values into \eqref{eqn:eignevalueBC:dispersion} yields 
 \eqref{eqn:DispersionRelation}.
\end{proof}
\end{lem}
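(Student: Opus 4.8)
The plan is to compute $\ker(L-ikI)$ directly. Componentwise, $(L-ikI)(u,v,\eta)=0$ reads $p(y)v(h)-v_y=iku$, $u_y-c_*\eta p'(y)=ikv$, and $v(h)/c_*=ik\eta$, together with $v(0)=v(1)=0$, $\int_0^h u=\int_h^1 u=0$, and continuity of $u$ and $v$ from membership in $\mathcal W$. Since $u\in H^1((0,1))\subset C^0$, the first equation makes $v$ twice differentiable on each of $(0,h)$ and $(h,1)$ by a short bootstrap; differentiating it and inserting the second equation yields $v_{yy}=p'(y)\bigl(v(h)-ik c_*\eta\bigr)+k^2 v$, and the third equation annihilates the first term. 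Thus $v_{yy}=k^2 v$ on $(0,h)\cup(h,1)$, and with $v(0)=v(1)=0$ we get $v=A\sinh(ky)$ on $(0,h)$ and $v=B\sinh(k(1-y))$ on $(h,1)$ for constants $A,B$. Substituting back into the first equation (and using the third to simplify) recovers $u=c_*\eta p(y)+iA\cosh(ky)$ on $(0,h)$ and $u=c_*\eta p(y)-iB\cosh(k(1-y))$ on $(h,1)$; the second equation then holds automatically.

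It remains to impose the interface conditions at $y=h$. Continuity of $v$ combined with $v(h)=ikc_*\eta$ gives $A\sinh(kh)=ikc_*\eta$ and $B\sinh(k(1-h))=ikc_*\eta$. The point requiring care is that $p$ is \emph{discontinuous} at $y=h$, because $\omega$ jumps from $\omega_0$ to $\omega_1$ there, with $p(h^-)-p(h^+)=(\omega_0-\omega_1)/c_*=1/c_*$; hence continuity of $u$ at $y=h$ picks up the contribution $c_*\eta\bigl(p(h^-)-p(h^+)\bigr)=\eta$ and becomes $A\cosh(kh)+B\cosh(k(1-h))=i\eta$. The pseudoflux constraints then follow from these relations, using $\int_0^h p=1$ and $\int_h^1 p=-1$, so they impose nothing new. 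Consequently the kernel is non-trivial precisely when this $3\times3$ linear system in $(A,B,\eta)$ has a non-zero solution.

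To finish I would split into cases. If $\eta\neq0$, then $\sinh(kh)$ and $\sinh(k(1-h))$ are nonzero, so $A$ and $B$ are determined by the first two relations; substituting into the third and dividing by $i\eta$ gives $c_*k\bigl(\coth(kh)+\coth(k(1-h))\bigr)=1$, which is \eqref{eqn:DispersionRelation}. If $\eta=0$, the first two relations force $A\sinh(kh)=B\sinh(k(1-h))=0$; since $\sinh$ and $\cosh$ cannot vanish together, the relation $A\cosh(kh)+B\cosh(k(1-h))=0$ rules out $A=0$ (otherwise $B\sinh(k(1-h))=B\cosh(k(1-h))=0$ forces $B=0$ and a trivial kernel) and likewise $B=0$, so $\sinh(kh)=\sinh(k(1-h))=0$; with $z=ik$ this is exactly $\sin(hz)=\sin((1-h)z)=0$, $z\neq0$, i.e.\ \eqref{eqn:BadDispersionRelation}.

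I expect the one genuine subtlety to be the bookkeeping at the interface: one must track that $p$ carries the vorticity jump, since that jump is precisely what produces the coupling term $i\eta$ in the third interface relation and hence the true dispersion relation — treating $p$ as continuous would decouple $\eta$ and lose \eqref{eqn:DispersionRelation} entirely. Secondary checks are that the pseudoflux constraints add nothing new, and that the degenerate case $\eta=0$ genuinely lands on \eqref{eqn:BadDispersionRelation} rather than producing spurious kernel elements.
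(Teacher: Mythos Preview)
Your proposal is correct and follows essentially the same route as the paper's proof: derive $v_{yy}=k^2v$, solve with the boundary conditions, recover $u$, and reduce to the $3\times 3$ system in $(A,B,\eta)$, then split on $\eta=0$ versus $\eta\neq 0$. You are in fact slightly more careful than the paper in two places --- you explain explicitly why the jump in $p$ at $y=h$ produces the $i\eta$ term in the continuity condition for $u$, and you verify that the pseudoflux constraints are automatically satisfied --- both of which the paper leaves implicit.
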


It is interesting to note that $\mathfrak d$ is a meromorphic function on $\C$ in $k$, with a removable singularity at $k=0$. Using this to evaluate \eqref{eqn:DispersionRelation} at $k=0$, and then solving for $c_*$, yields $c_*=h(1-h)$.
Notice that $h(1-h)$ is never 0, so it is a valid value for $c_*$.

\begin{lem} \label{lem:0evalue}
$L$ has an eigenvalue of $0$ if and only if $c_*=h(1-h)$. Furthermore, in this case, $0$ is the only purely imaginary eigenvalue, and has algebraic multiplicity two.
\begin{proof}
We seek solutions to $L(u,v,\eta)=0$. It can be easily shown using the equations that come directly from $L(u,v,\eta)=0$, the conditions on $v$, and the integral conditions on $u$, that if a non-trivial kernel exists, it must be spanned by a vector of the form
\begin{align}\label{eqn:ustar}
  u=u_*&= \begin{cases}
    c_*  \left(p(y) - h^{-1} \right) & \quad  \text{for} \quad 0 \leq y \leq h \\
            c_*  \left(p(y) + (1-h)^{-1} \right) & \quad \text{for} \quad h \leq y \leq 1
      \end{cases}\\
    v&=0\\
  \eta &=1.
\end{align}
The choice $c_*=h(1-h)$ then guarantees $p$ is such that $u$ is continuous at $h$, so that we indeed have a one-dimensional kernel. 

We now show that there are no other purely imaginary eigenvalues. 
Lemma~\ref{lem:NonzeroEvalues} means that this is equivalent to showing that \eqref{eqn:DispersionRelation} has no non-zero real roots. 
We differentiate $\mathfrak d$ with respect to $k$, and see
\begin{align*}
    \mathfrak d '(k) &= \frac{\sinh (2hk) - 2hk}{2 \sinh^2(hk)} +\frac{\sinh (2(1-h)k) - 2(1-h)k}{2 \sinh^2((1-h)k)}.
\end{align*}
Therefore $\mathfrak d (k)$ is strictly decreasing for $k<0$ and strictly increasing for $k>0$, so it achieves its minimum value only at $k=0$. Hence, for $k \in \R \setminus \{ 0 \}$, we have \[\mathfrak d (k) > \frac{1}{c_*}. \]
Therefore \eqref{eqn:DispersionRelation} has no nonzero real roots, thus $0$ is the only purely imaginary eigenvalue.

We now seek second order generalised eigenvectors, i.e., solutions to $L(u,v,\eta) = (u_*, 0, 1)$. Without loss of generality, we can take $\eta = 0$.
Therefore we need to solve
\begin{align*}
    c_*p - v_y&=u_*\\
    u_y &= 0\\
    v(0)&=v(1)=0\\
    v(h)&=c_*\\
    \int_0^h u \; dy &= \int_h^1 u \; dy=0,
\end{align*}
which has solution $(0,v_*,0)$ where
\begin{align*}
  v_*= \begin{cases}
    		c_* y h^{-1} & \quad \text{for} \quad 0 \leq y \leq h \\
            c_* (1-y)(1-h)^{-1} & \quad \text{for} \quad h \leq y \leq 1.
        \end{cases}
\end{align*} 

Performing a similar process to find a third order generalised eigenvector shows that none exist, so we conclude that when $c_*=h(1-h)$, $0$ is an eigenvector, and has algebraic multiplicity 2.
\end{proof}
\end{lem}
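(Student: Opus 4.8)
The plan is to compute the kernel and generalised kernel of $L$ by hand, exploiting the fact that $L(u,v,\eta)=0$ is almost triangular: the third and first components constrain $v$, the second component determines $u$ up to two additive constants, and the conditions defining $\mathcal W$ (the two mean-zero conditions on $u$, continuity of $u$ at $y=h$, and $v(0)=v(1)=0$) then fix everything. For the first assertion, setting $L(u,v,\eta)=0$ the third component forces $v(h)=0$, whence the first component gives $v_y=p(y)v(h)=0$, so $v$ is piecewise constant and hence $v\equiv 0$ by the boundary conditions. The second component $u_y=c_*\eta p'(y)$ integrates to $u=c_*\eta p(y)+C_i$ on $(0,h)$ and on $(h,1)$, and the two conditions $\int_0^h u=\int_h^1 u=0$ pin down $C_0,C_1$ in terms of $\eta$ once one records the elementary integrals $\int_0^h p=1$ and $\int_h^1 p=-1$. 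The last requirement, continuity of $u$ at $y=h$, is where $c_*$ enters: since $\omega$ jumps by $\omega_0-\omega_1=1$ across the interface, $p$ has a jump of exactly $1/c_*$ there, and matching $u$ collapses to the scalar equation $c_*/(h(1-h))=1$. So a nontrivial kernel exists precisely when $c_*=h(1-h)$, and it is then one-dimensional, spanned by the $(u_*,0,1)$ in the statement.

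Next I would show that $0$ is the only purely imaginary eigenvalue when $c_*=h(1-h)$. By Lemma~\ref{lem:NonzeroEvalues}, any nonzero eigenvalue on the imaginary axis, say $ik$ with $k\in\R\setminus\{0\}$, must satisfy \eqref{eqn:DispersionRelation} or \eqref{eqn:BadDispersionRelation}; the latter cannot hold for purely imaginary $z\ne 0$ since $\sin(ihk)=i\sinh(hk)\ne 0$. Hence it remains to check that $1/c_*=1/(h(1-h))$ is not a value of $\mathfrak d(k)=k\big(\coth(kh)+\coth((1-h)k)\big)$ at any real $k\ne 0$. Evaluating the removable singularity gives $\mathfrak d(0)=1/h+1/(1-h)=1/(h(1-h))$, so it suffices to prove $0$ is the strict global minimiser of $\mathfrak d$ on $\R$: differentiating and using that $\tfrac12\sinh(2at)-at$ has the same sign as $t$ (because $\sinh s>s$ for $s>0$) shows $\mathfrak d'<0$ on $(-\infty,0)$ and $\mathfrak d'>0$ on $(0,\infty)$, giving $\mathfrak d(k)>1/c_*$ for $k\ne 0$.

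Finally, for the algebraic multiplicity I would build the Jordan chain at $0$. Solving $Lw_1=(u_*,0,1)$ by the same scheme (now $v(h)=c_*$ and $v_y=c_*p(y)-u_*$ is piecewise constant) produces, after subtracting a suitable multiple of the kernel vector, $w_1=(0,v_*,0)$ with the piecewise-linear $v_*$ in the statement. To see the chain terminates, I would attempt to solve $Lw_2=(0,v_*,0)$: again $v\equiv 0$, and the second component gives $u=W(y)+c_*\eta p(y)+E_i$ with $W$ an antiderivative of $v_*$; imposing continuity at $h$ and the two mean-zero conditions yields a $3\times 3$ affine system in $(\eta,E_0,E_1)$ whose linear part has determinant $h(1-h)-c_*=0$, and pairing the right-hand side with the left null vector reduces solvability to the identity $(1-h)\int_0^h W=h\int_h^1 W$, which fails because the two integrals evaluate to $c_*h^2/6$ and $c_*(1-h)(h+2)/6$. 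Hence no order-three generalised eigenvector exists, $\bigcup_n\ker L^n=\operatorname{span}\{w_0,w_1\}$, and the algebraic multiplicity of $0$ equals $2$. I expect this last step to be the main obstacle: it is the one place the argument is a genuine computation tied to the explicit coefficient function $p$ and to $v_*$, and the constants must be organised carefully so that the final inconsistency is transparent.
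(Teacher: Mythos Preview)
Your proposal is correct and follows essentially the same route as the paper: compute the kernel by forcing $v\equiv 0$ and integrating the second component, reduce the continuity condition at $y=h$ to $c_*=h(1-h)$ via the jump of $p$, rule out other purely imaginary eigenvalues by showing $\mathfrak d$ has a strict global minimum at $k=0$, and then build the Jordan chain at $0$. The only noteworthy difference is that where the paper simply asserts ``performing a similar process to find a third order generalised eigenvector shows that none exist,'' you actually carry out the obstruction calculation, reducing solvability of $Lw_2=(0,v_*,0)$ to the affine system in $(\eta,E_0,E_1)$, identifying the left null vector of the singular coefficient matrix, and checking that the solvability condition $(1-h)\int_0^h W = h\int_h^1 W$ fails (indeed, after cancelling $c_*h(1-h)/6$ it becomes $h=2+h$). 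This makes explicit a step the paper leaves to the reader, and your constants are correct.
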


\begin{cor}
For $c_*=h(1-h)$, $L$ satisfies Hypothesis~\ref{hypothesis1} of Theorem~\ref{thm:CMT}.

\begin{proof}
We have shown that for this value of $c_*$, $L$ has one purely imaginary eigenvalue, and it has finite algebraic multiplicity. Furthermore, since $L$ is Fredholm, its spectrum consists only of eigenvalues, therefore $\sigma_0=\{0\}$. Thus, Hypothesis~\ref{hypothesis1} is satisfied.
\end{proof}
\end{cor}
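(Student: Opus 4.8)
The plan is to assemble Hypothesis~\ref{hypothesis1} directly from the structural facts about $L$ already established. Recall that Hypothesis~\ref{hypothesis1} asks for three things about the centre spectrum $\sigma_0 = \{z \in \sigma(L) \mid \Re z = 0\}$: that it consist only of eigenvalues, that it be finite, and that each of its points have finite algebraic multiplicity. The first point is immediate from the preceding corollary, which shows that $L$, and hence $L - zI$ for every $z \in \C$, is Fredholm of index $0$; consequently $z \in \sigma(L)$ precisely when $\ker(L - zI) \neq \{0\}$, so every point of $\sigma(L)$, and a fortiori of $\sigma_0$, is an eigenvalue.

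It then remains to identify $\sigma_0$ and check finite multiplicity, and for this I would combine Lemma~\ref{lem:NonzeroEvalues} with Lemma~\ref{lem:0evalue}. A purely imaginary eigenvalue is either $0$ or of the form $ik$ with $k \in \R \setminus \{0\}$. By Lemma~\ref{lem:NonzeroEvalues} such a nonzero $k$ must satisfy the dispersion relation \eqref{eqn:DispersionRelation} or the exceptional relation \eqref{eqn:BadDispersionRelation}. The exceptional relation cannot produce a purely imaginary eigenvalue: with $z = ik$ the condition $\sin(hz) = i\sinh(hk) = 0$ forces $k = 0$ — equivalently, as observed in the text, any $z_*$ solving \eqref{eqn:BadDispersionRelation} is real with $|z_*| > \pi$, hence lies off the imaginary axis. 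And \eqref{eqn:DispersionRelation} has no nonzero real root once $c_* = h(1-h) > 0$, since the monotonicity argument in the proof of Lemma~\ref{lem:0evalue} gives $\mathfrak d(k) > 1/c_*$ for all $k \in \R \setminus \{0\}$. Thus no nonzero purely imaginary eigenvalue exists. Lemma~\ref{lem:0evalue} also shows that, precisely because $c_* = h(1-h)$, the value $z = 0$ \emph{is} an eigenvalue, with algebraic multiplicity exactly $2$ (generalised kernel spanned by $(u_*,0,1)$ and $(0,v_*,0)$, with no third-order generalised eigenvector).

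Putting these together yields $\sigma_0 = \{0\}$, a finite set whose single element has finite algebraic multiplicity, which is exactly Hypothesis~\ref{hypothesis1}. I do not expect any real obstacle here: all of the analytic content — the Fredholm property via the invertibility of $\tilde L$, the reduction of nonzero eigenvalues to the dispersion relation, the monotonicity ruling out nonzero real roots, and the explicit computation of the generalised kernel — has been done in the lemmas, so the corollary is essentially bookkeeping. The only point I would be careful to state explicitly is \emph{why} the eigenvalues arising from \eqref{eqn:BadDispersionRelation} do not enter $\sigma_0$, since such eigenvalues may exist for every choice of parameters; this is the one place where the distinction between the imaginary axis and the rest of $\sigma(L)$ is actually used.
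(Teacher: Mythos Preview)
Your proposal is correct and follows essentially the same approach as the paper: both argue that the Fredholm property reduces $\sigma(L)$ to eigenvalues, and then invoke Lemma~\ref{lem:0evalue} (which already states that $0$ is the only purely imaginary eigenvalue and has algebraic multiplicity two) to conclude $\sigma_0 = \{0\}$. You unpack the content of that lemma more explicitly—naming the dispersion relation, the monotonicity of $\mathfrak d$, and the irrelevance of \eqref{eqn:BadDispersionRelation} on the imaginary axis—but this is elaboration rather than a different route.
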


We now verify Hypothesis~\ref{hypothesis2}. Since $\sigma_0$ is finite, the inverse $(L-ikI)^{-1}$, which we view as a linear operator $\mathcal X \to \mathcal X$, exists for sufficiently large $k$.

\begin{prop}
    For any value of $c_* \neq 0$, the operator $L$ satisfies Hypothesis~\ref{hypothesis2} of Theorem~\ref{thm:CMT}.
\begin{proof}
Suppose $(L-ikI)(u,v,\eta) = (f,g,\alpha)$.
For the remainder of this proof, $\lVert \, \cdot \, \rVert$ should be taken to mean the $L^2((0,1))$ norm, and $P=1+\lVert p \rVert$.

Our first step will be to find a bound on $|v(h)|$ which grows sub-linearly in $k$. We see that
\begin{align*}
v(h)^2 &= \int_0^h 2v v_y \; dy 
= -2ik \int_0^h uv \; dy - 2\int_0^h fv \; dy + 2v(h)\int_0^h pv \; dy, 
\end{align*}
and therefore,
\begin{align*}
|v(h)|^2 &\leq 2|v(h)|\lVert p \rVert \lVert v \rVert  + 2|k| \lVert u \rVert \lVert v \rVert + 2\lVert f \rVert \lVert v \rVert. 
\end{align*}
Thinking of this as a quadratic in $|v(h)|$, then applying Cauchy--Schwarz gives us
\begin{align}
\nonumber |v(h)| &\leq  \lVert p \rVert \lVert v \rVert + \sqrt{\lVert p \rVert^2 \lVert v \rVert^2 + 2 \lVert ku \rVert \lVert v \rVert + 2 \lVert f \rVert \lVert v \rVert} \\
\nonumber&
\leq  2\lVert p \rVert \lVert v \rVert + \sqrt{2 \lVert ku \rVert \lVert v \rVert} +  \lVert f \rVert +  \lVert v \rVert
\leq  2P \lVert v \rVert + \sqrt{2|k|} \lVert (u,v) \rVert  +  \lVert f \rVert\\
\label{eqn:bound v(h)}&\leq \left(2P + \sqrt{2|k|} \right) \lVert (u,v) \rVert  + \lVert f \rVert,
\end{align}
which is a bound of the desired form.

Now we multiply the first component of $(L-ikI)(u,v,\eta) = (f,g,\alpha)$ by $\bar{u}$, the complex conjugate of the second by $v$, and subtract to get
\begin{align*}
\bar{u} v_y + \bar{u}_y v + ik( \lvert u \rvert ^2 +  \lvert v \rvert ^2) -c_*  \bar{\eta} p' v - v(h) p \bar{u} &= -f \bar{u} + \bar{g} v.
\end{align*}
Integrating over $[0,1]$ yields
\begin{align*}
\int_0^1  (\bar{u} v)_y \; dy + ik  \lVert (u,v) \rVert^2  &=  \int_0^1 -\bar{f} u + \bar{g} v + v(h)p \bar{u} + c_* \bar{\eta} p' v  \; dy.
\end{align*}
The boundary conditions on $v$ mean that $\bar{u}(0)v(0)=\bar{u}(1)v(1)=0$, and so using this and Cauchy--Schwarz,
\begin{align*}
|k| \lVert (u,v) \rVert^2 & \leq \lVert (u,v) \rVert \lVert (f,g) \rVert + |v(h)| \lVert p \rVert \lVert u \rVert + |\eta| |c_*|  \lVert p' \rVert \lVert v \rVert,
\end{align*}
so by \eqref{eqn:bound v(h)},
\begin{align*}
\left( |k| - \lVert p \rVert \sqrt{2|k|}-2P^2 \right) \lVert (u,v) \rVert^2 & \leq \lVert (u,v) \rVert \lVert (f,g) \rVert  + \lVert p \rVert \lVert u \rVert \lVert f \rVert + \lvert \eta \rvert |c_*|  \lVert p' \rVert \lVert v \rVert.
\end{align*}
Dividing through by $\lVert (u,v) \rVert$ gives us
\begin{align*}
\left( |k| - \lVert p \rVert \sqrt{2|k|}-2P^2 \right) \lVert (u,v) \rVert & \leq P \lVert (f,g) \rVert  + \lvert \eta \rvert |c_*|  \lVert p' \rVert,
\end{align*}
then adding $\left( |k| - |c_*| \lVert p' \rVert \right)|\eta|$ to both sides yields
\begin{align*}
\left( |k| - \lVert p \rVert \sqrt{2|k|}-2P^2 \right) \lVert (u,v) \rVert + \left( |k| - |c_*| \lVert p' \rVert \right)|\eta| & \leq P \lVert (f,g) \rVert  + |k\eta|.
\end{align*}
Then, applying Cauchy--Schwarz shows that
\begin{align*}
\frac 12 \left( |k| - \lVert p \rVert \sqrt{2|k|}-2P^2 - |c_*| \lVert p' \rVert \right) \lVert (u,v,\eta) \rVert & \leq P \lVert (f,g) \rVert  + |k\eta|.
\end{align*}
The third component of $(L-ikI)(u,v,\eta)=(f,g,\alpha)$ tells us that $v(h)/c_*-ik\eta = \alpha$. Applying this, then \eqref{eqn:bound v(h)} to the final term of the previous line yields
\begin{align*}
\frac 12 \left( |k| - \lVert p \rVert \sqrt{2|k|}-2P^2 - |c_*| \lVert p' \rVert \right) \lVert (u,v,\eta) \rVert & \leq P \lVert (f,g) \rVert  +  |\alpha|+|c_*|^{-1}|v(h)|\\
& \leq P \lVert (f,g) \rVert  +  |\alpha|+ |c_*|^{-1} \lVert f \rVert\\
& \quad +|c_*|^{-1}\left(2P + \sqrt{2|k|} \right) \lVert (u,v) \rVert .
\end{align*}
Finally, rearranging, we see
\begin{align*}
\lVert (u,v,\eta) \rVert & \leq \frac{ 2(P+|c_*|^{-1}) \lVert (f,g) \rVert  +  2|\alpha|}{ |k| - \left(\lVert p \rVert + 2 |c_*|^{-1} \right)\sqrt{2|k|}-2P^2 - |c_*| \lVert p' \rVert - 4 P |c_*|^{-1}}.
\end{align*}
For sufficiently large $|k|$, the denominator is larger than $\frac 12 |k|$.
Therefore there exists $R>0$ such that for $|k|>R$,
\begin{align*}
|k| \lVert (u,v,\eta) \rVert_ \mathcal X & \leq C \lVert (f,g,\alpha) \rVert _ \mathcal X
\end{align*}
as required, where the constant $C$ depends only on $R$, $h$, $\omega_0$, $\omega_1$, and $c_*$. 
\end{proof}
\end{prop}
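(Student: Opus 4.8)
The plan is to prove the resolvent-type estimate of Hypothesis~\ref{hypothesis2} by a direct energy argument applied to the system $(L-ikI)(u,v,\eta)=(f,g,\alpha)$, valid uniformly for $c_* \neq 0$, and then to deduce invertibility from the fact (established above) that $L-zI$ is Fredholm of index $0$ for every $z \in \C$. Throughout, $\lVert \cdot \rVert$ denotes the $L^2((0,1))$ norm and $\lVert(u,v)\rVert^2 = \lVert u \rVert^2 + \lVert v \rVert^2$, so that $\lVert (u,v) \rVert \le \lVert (u,v,\eta) \rVert_{\mathcal X}$. Writing out the components of $L$ gives $v_y = p(y)v(h) - iku - f$, $u_y = c_* \eta p'(y) + ikv + g$, and $v(h)/c_* - ik\eta = \alpha$. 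The key preliminary step is a bound on the trace $|v(h)|$ that grows only like $\sqrt{|k|}$: since $v(0)=0$ one has $v(h)^2 = 2\int_0^h vv_y\,dy$, and inserting the first equation and applying Cauchy--Schwarz gives a quadratic inequality in $|v(h)|$ whose solution is of the form $|v(h)| \le C(1+\lVert p\rVert)\lVert(u,v)\rVert + \sqrt{2|k|}\,\lVert(u,v)\rVert + \lVert f \rVert$. It is crucial that $v(h)^2$ is written as an integral against $v_y$ rather than against $v$ alone; a cruder estimate would give only $|v(h)| = O(|k|)$, which would be useless below.

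Next comes the main energy identity. Multiplying the first equation by $\bar u$, conjugating the second and multiplying by $v$, and subtracting, the terms $v_y\bar u + \bar u_y v = (\bar u v)_y$ assemble into a total derivative; integrating over $[0,1]$ and using $v(0)=v(1)=0$ isolates $ik\lVert(u,v)\rVert^2$. Taking absolute values and applying Cauchy--Schwarz bounds $|k|\lVert(u,v)\rVert^2$ by $\lVert(f,g)\rVert\lVert(u,v)\rVert + |v(h)|\,\lVert p\rVert\,\lVert u\rVert + |c_*|\,|\eta|\,\lVert p'\rVert\,\lVert v\rVert$, and substituting the $\sqrt{|k|}$ bound on $|v(h)|$ lets us absorb all terms of order $\sqrt{|k|}\lVert(u,v)\rVert^2$ and $\lVert(u,v)\rVert^2$ into the left side, leaving $(|k| - O(\sqrt{|k|}))\lVert(u,v)\rVert \le C(\lVert(f,g)\rVert + |\eta|)$. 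To bring in $\eta$, add a suitable multiple of $|\eta|$ to both sides so that the right side contains $|k\eta|$, then use the third component $ik\eta = v(h)/c_* - \alpha$ and the $|v(h)|$ bound once more to replace $|k\eta|$ by $O(\sqrt{|k|})\lVert(u,v)\rVert + |c_*|^{-1}\lVert f\rVert + |\alpha|$. Collecting terms, dividing by the resulting coefficient --- which exceeds $\tfrac12|k|$ once $|k| > R$ for $R$ large depending only on $h,\omega_0,\omega_1,c_*$ --- and using $\lVert(u,v)\rVert \le \lVert(u,v,\eta)\rVert_{\mathcal X}$ yields $|k|\,\lVert(u,v,\eta)\rVert_{\mathcal X} \le C\,\lVert(f,g,\alpha)\rVert_{\mathcal X}$.

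Finally, for the invertibility claim: since $\mathcal W$ embeds compactly in $\mathcal X$, the operator $L - ikI \colon \mathcal W \to \mathcal X$ is a compact perturbation of $L$, hence Fredholm of index $0$; the estimate just derived shows its kernel is trivial for $|k| > R$, so it is bijective, and the bounded inverse theorem gives that $(L-ikI)^{-1}$, viewed as a map $\mathcal X \to \mathcal X$, is bounded and satisfies the asserted bound. I expect the sublinear trace estimate for $|v(h)|$ to be the only genuinely delicate point; the rest is careful but routine bookkeeping of constants, and the value $c_* = h(1-h)$ plays no special role here --- only $c_* \neq 0$ matters, so that one may divide by $c_*$ throughout.
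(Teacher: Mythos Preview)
Your proposal is correct and follows essentially the same route as the paper: the sublinear trace bound on $|v(h)|$ via $v(h)^2 = 2\int_0^h v v_y\,dy$, the energy identity obtained by pairing the first equation with $\bar u$ and the conjugated second with $v$, and then bootstrapping $|\eta|$ via the third component are exactly the paper's steps. The only minor difference is that the paper dispatches invertibility in a sentence before the proposition (using that $\sigma_0$ is finite and the spectrum consists only of eigenvalues), whereas you deduce it afterward from trivial kernel plus Fredholm index zero; both are equivalent and equally valid.
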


We now verify Hypothesis~\ref{hypothesis3}, and show a spectral gap around the imaginary axis. 
This could in principle be done by only studying the solutions to  \eqref{eqn:DispersionRelation}. However, we 
use an easier proof which makes use of Hypothesis~\ref{hypothesis2} as well.

\begin{prop}
There exists $\delta>0$ such that there are no $z$ in the spectrum of $L$ satisfying $0<|\Re(z)|<\delta$.

\begin{proof}
We show this by contradiction. Suppose this spectral gap does not exist. Hence, there must exist a sequence $(z_n)$ in the spectrum such that $\Re(z_n) \to 0$.
Define 
\[ \mathcal S = \sigma(L) \cap \{ z \in \C \mid - \pi \leq \Re(z) \leq \pi \} ,\]
where $\sigma(L)$ is the spectrum of $L$. None of the $z \in \mathcal S$ can satisfy \eqref{eqn:BadDispersionRelation}, so for all $z \in \mathcal S$, we have that \eqref{eqn:DispersionRelation} holds, i.e., that $\mathfrak{d}(-iz) = c_*^{-1}$. Since $\mathfrak{d}(-iz)$ is a meromorphic function in $z$, $\mathcal S$ is an isolated set.
In other words, $(z_n)$ is a sequence taking values in an isolated set, with $\Re(z_n) \to 0$, so we conclude $ |\Im(z_n)| \to \infty$.

However, by Hypothesis~\ref{hypothesis2} there exist constants $C$, $R$ such that  for all $z \in i\R$ with $|z|>R$, we have 
\[|z| \lVert (u,v,\eta) \rVert_ \mathcal{X}  \leq C \lVert (L-zI)(u,v,\eta) \rVert_ \mathcal X , \]
and $L-zI$ invertible. Pick $n_*$ such that for all $n>n_*$, we have that $ |\Im(z_n)| > R$ and $|\Re(z_n)|<R/C$. Let $\tilde{z}_n = i \Im (z_n)$. 
Then $(z_n-\tilde{z}_n)(L-\tilde{z}_nI)^{-1} \colon \mathcal{X} \to \mathcal{X}$ has operator norm strictly less than $1$. 
This means $I - (z_n-\tilde{z}_n)(L-\tilde{z}_nI)^{-1} \colon \mathcal X \to \mathcal X$ is injective, so $I - (z_n-\tilde{z}_n)(L-\tilde{z}_nI)^{-1} \colon \mathcal{W} \to \mathcal{W}$ is injective. Hence 
\[L-z_nI = (L-\tilde{z}_nI) (I-(L-\tilde{z}_nI)^{-1}(z_n-\tilde{z}_n)I)\] is also injective.
Therefore, for all $n>n_*$, we have that $z_n$ is in the resolvent set, but we assumed it was in the spectrum, and so arrive at our contradiction.
\end{proof}
\end{prop}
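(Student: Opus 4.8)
The plan is to argue by contradiction. Suppose no such spectral gap exists; then there is a sequence $(z_n)$ in $\sigma(L)$ with $\Re(z_n) \ne 0$ and $\Re(z_n) \to 0$. First I would confine these to a fixed vertical strip and use the already-established fact that $\sigma(L)$ consists only of eigenvalues. Inside the strip $\{|\Re(z)| \le \pi\}$ no eigenvalue can satisfy the exceptional relation \eqref{eqn:BadDispersionRelation}, since any solution of that is real with modulus exceeding $\pi$; hence by Lemma~\ref{lem:NonzeroEvalues} every eigenvalue in this strip is a zero of the function $z \mapsto \mathfrak{d}(-iz) - c_*^{-1}$ (with the removable singularity at the origin filled in). This function is meromorphic and not identically zero, so its zeros are isolated with no finite accumulation point, and therefore only finitely many eigenvalues lie in any bounded subregion of the strip. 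Consequently the sequence $(z_n)$, which eventually lies in the strip and has $\Re(z_n) \to 0$, cannot accumulate anywhere in $\C$, which forces $|\Im(z_n)| \to \infty$.

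Next I would play this divergence against Hypothesis~\ref{hypothesis2}, already verified for our $L$: there are $R, C > 0$ so that for every purely imaginary $z$ with $|z| > R$ the operator $L - zI$ is invertible on $\mathcal{X}$ with $\|(L - zI)^{-1}\|_{\mathcal X \to \mathcal X} \le C/|z|$. For $n$ large enough that $|\Im(z_n)| > R$, set $\tilde z_n := i\,\Im(z_n)$ and write
\[
  L - z_n I = (L - \tilde z_n I)\bigl(I - (L - \tilde z_n I)^{-1}(z_n - \tilde z_n)\bigr).
\]
Since $|z_n - \tilde z_n| = |\Re(z_n)| \to 0$ while $\|(L - \tilde z_n I)^{-1}\| \le C/|\tilde z_n| \le C/R$, the correction term has operator norm $< 1$ for large $n$, so the second factor is invertible on $\mathcal{X}$, hence injective on $\mathcal{W}$; combined with invertibility of $L - \tilde z_n I$ this makes $L - z_n I$ injective on $\mathcal{W}$. (Alternatively, since $L - z_n I$ is Fredholm of index $0$, injectivity already suffices.) Thus $z_n$ lies in the resolvent set, contradicting $z_n \in \sigma(L)$, which proves the existence of $\delta$ and hence establishes Hypothesis~\ref{hypothesis3}.

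I expect the only genuine content to be the first step: showing that eigenvalues near the imaginary axis are isolated. This rests on the characterization of $\sigma(L)$ via the dispersion relation and on carefully separating out the eigenvalues arising from \eqref{eqn:BadDispersionRelation}, which need the strip width chosen so that those exceptional values are excluded. Once the contradicting sequence is driven off to infinity along the imaginary direction, the Neumann-series perturbation argument built on Hypothesis~\ref{hypothesis2} is routine; the only point requiring a little care is tracking whether invertibility is being asserted on $\mathcal{X}$ or on $\mathcal{W}$, which the Fredholm-index-zero property of $L - zI$ renders harmless.
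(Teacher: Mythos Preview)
Your proposal is correct and follows essentially the same approach as the paper: contradiction via a sequence $(z_n)$, confinement to the strip $\{|\Re z|\le\pi\}$ to exclude the exceptional eigenvalues from \eqref{eqn:BadDispersionRelation}, isolation of eigenvalues via meromorphy of $\mathfrak d(-iz)-c_*^{-1}$ forcing $|\Im z_n|\to\infty$, and then the Neumann-series perturbation off the imaginary axis using Hypothesis~\ref{hypothesis2}. The only cosmetic differences are that the paper fixes an explicit threshold $|\Re z_n|<R/C$ rather than appealing to $\Re z_n\to 0$, and does not invoke the Fredholm alternative explicitly at the end (though it is implicit, since the spectrum has already been shown to consist only of eigenvalues).
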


We have now verified Hypotheses~\ref{hypothesis1}--\ref{hypothesis3} of Theorem~\ref{thm:CMT} in the case $c_* = h(1-h)$, and so proceed for the rest of the paper with $c_* = h(1-h)$. This specific value of $c_*$ is of particular interest, as it corresponds to $k=0$, so we might expect to find waves of infinite period, i.e., solitary waves.

The final step in our analysis of $L$ is to seek the projection $P_0$ from Theorem~\ref{thm:CMT}.  
Notice that because 0 is the only purely imaginary element of the spectrum, $\mathcal{E}_0$ is the generalised kernel of $L$. 
Lemma~\ref{lem:0evalue} gives us a basis $\{\xi_0,\xi_1\}$ of $\mathcal{E}_0$, where 
\begin{align}
    \label{eqn:xi}
    \xi_0 = (u_*,0,1) \qquad \xi_1 = (0,v_*,0).
\end{align}
Thus we can write $P_0$ as
\begin{align}\label{eqn:form of P0}
    P_0(u,v,\eta) &= A(u,v,\eta) \xi_0 + B(u,v,\eta)\xi_1,
\end{align}     
where $A,B \colon \mathcal X \to \R $ are bounded linear functions. Calculating
\begin{align*}
    LP_0= 
    \begin{pmatrix}
        u_* \\ 0\\ 1
    \end{pmatrix} 
    B
    \quad \text{and} \quad 
    P_0L=  
    \begin{pmatrix}
      u_* \\ 0\\ 1
    \end{pmatrix}
    AL
    + 
    \begin{pmatrix}
      0 \\ v_*\\ 0
    \end{pmatrix}
    BL,
\end{align*}
we see that $L$ and $P_0$ commute if and only if $A \circ L = B$ and $B \circ L =0$. 

\begin{prop}
    \begin{subequations}\label{eqn:formula for projection}
        The projection $P_0$ from Theorem~\ref{thm:CMT} is given by \eqref{eqn:form of P0}, where
        \begin{align}
            \label{eqn:formula for projection:A} A(u,v,\eta)&=\frac{3}{2h^2(1-h)} \int_0^h y^2u(y) \; dy - \frac{3}{2h(1-h)^2} \int_h^1 (1-y)^2 u(y) \; dy\\
        	   \nonumber & \quad \quad + \frac {( 8(1-h)^4 - 15(1-h)^3  + 5(1-h))\omega_1 - ( 8h^4 - 15h^3 + 5h)\omega_0}{20h^2(1 - h)^2 }\eta\\
            \label{eqn:formula for projection:B} B(u,v,\eta)&= \frac{3}{h^2(1-h)}\int_0^h y v(y) \; dy + \frac{3}{h(1-h)^2}\int_h^1 (1-y) v(y)\; dy,
        \end{align}
        and $\xi_0 = (u_*,0,1)$ and $\xi_1 = (0,v_*,0)$, as defined in \eqref{eqn:xi}.
    \end{subequations}
\begin{proof}
This can be verified by brute force. However, we now show a little of the construction, in order to give some intuition about why the projection is of this form. 
We first seek $B$. Notice that $L(u,v,\eta) = (f,g,a)$ implies that 
$u-h(1-h) \eta p = G+ \text{constant}$, where $G$ is any primitive of $g$. This implies
\begin{equation}\label{eqn:motivationforB}
    \frac{1}{h}\int_0^h G(y) \; dy - \frac{1}{1-h} \int_h^1 G(y) \; dy = 0.
\end{equation} 
Now construct $B$ such that it satisfies $B \circ L =0$. 
Motivated by \eqref{eqn:motivationforB}, let $V$ be a primitive of $v$, and define 
\begin{align*}
    B(u,v,\eta) &= \frac{3}{h(1-h)}\left(- \frac{1}{h}\int_0^h V \; dy + \frac{1}{1-h} \int_h^1 V \; dy \right)\\
    &=\frac{3}{h(1-h)}\left( - \frac{1}{h}\int_0^h (h-y)v \; dy + \frac{1}{1-h}\int_h^1 (1-y)v \; dy + \int_0^h v \; dy\right)\\
    &= \frac{3}{h^2(1-h)}\int_0^h y v \; dy + \frac{3}{h(1-h)^2}\int_h^1 (1-y) v\; dy .
\end{align*} 
If $L$ had an inverse, solving $A \circ L = B$ for $A$ would give a formula for $A$. Doing exactly this is impossible, as $L$ is not invertible, but we can use the fact that $B$ depends only on $v$ to do something very similar.  Notice that $L(u,v,\eta) = (f,g,a)$ implies that 
$h(1-h)\alpha p - f =v_y$. This motivates the definition
\begin{align*}
	A(u,v,\eta) &= B \left(u, \int_h^y h(1-h) \eta p(\tilde{y}) - u(\tilde{y}) \; d \tilde{y} + h(1-h) \eta, \eta \right)\\	
	&= -\frac{3}{2h^2(1-h)} \int_0^h y^2(h(1-h) \eta p(y) - u(y)) \; dy \\
	& \quad \quad+ \frac{3}{2h(1-h)^2} \int_h^1 (1-y)^2(h(1-h)\eta p(y) - u(y)) \; dy +\frac{3}{2} \eta \\
	&=\frac{3}{2h^2(1-h)} \int_0^h y^2u(y) \; dy - \frac{3}{2h(1-h)^2} \int_h^1 (1-y)^2 u(y) \; dy\\
	& \quad \quad + \frac {( 8(1-h)^4 - 15(1-h)^3  + 5(1-h))\omega_1 - ( 8h^4 - 15h^3 + 5h)\omega_0}{20h^2(1 - h)^2 }\eta .
\end{align*}
Notice that
\begin{align*}
	A(L(u,v,\eta)) &= B \left( v(h)p - v_y, \int_h^y v(h)p - v(h)p + v_y \; d \tilde {y} + v(h), \frac{v(h)}{h(1-h)} \right)\\
	&=B(u,v,\eta).
 \end{align*}
Therefore $A$ and $B$ exhibit the required behaviour when composed with $L$. It remains only to check that $P_0$ is indeed a projection onto the generalised kernel of $L$. We see that
\[ A(u_*, 0,1) = B(0,v_*,0) = 1,\]
and
\[ B(u_*, 0,1) = A(0,v_*,0) = 0,\]
so we are done.     
\end{proof}
\end{prop}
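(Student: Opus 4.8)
The plan is to show that the operator $P_0 = A(\cdot)\,\xi_0 + B(\cdot)\,\xi_1$ defined by \eqref{eqn:form of P0} with $A,B$ as in \eqref{eqn:formula for projection:A}--\eqref{eqn:formula for projection:B} is a continuous projection onto $\mathcal{E}_0 = \operatorname{span}\{\xi_0,\xi_1\}$ that commutes with $L$; these are exactly the properties demanded of $P_0$ in Theorem~\ref{thm:CMT}, and since $0$ is an isolated point of $\sigma(L)$ of finite algebraic multiplicity (Lemma~\ref{lem:0evalue}) such a $P_0$ is in any case unique, so this identifies it. Continuity and linearity of $A,B\colon \mathcal{X}\to\R$ are immediate: each is a finite sum of $L^2$-pairings of the $u$- or $v$-component against one of the bounded weights $y^2$, $(1-y)^2$, $y$, $1-y$ on the relevant subinterval, plus a constant multiple of $\eta$; hence $P_0$ is bounded and linear with $\operatorname{ran} P_0 \subseteq \mathcal{E}_0$.

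Next I would check that $P_0$ is a projection with range exactly $\mathcal{E}_0$. Since $\operatorname{ran} P_0 \subseteq \mathcal{E}_0$, it is enough to show $P_0$ restricts to the identity on $\mathcal{E}_0$, i.e.\ that the $2\times2$ matrix with entries $A(\xi_0), A(\xi_1), B(\xi_0), B(\xi_1)$ is the identity; then $P_0^2 = P_0$ and $\operatorname{ran} P_0 = \mathcal{E}_0$. Two entries are immediate: $A$ does not involve the $v$-component and $\xi_1 = (0,v_*,0)$ has vanishing $u$- and $\eta$-components, so $A(\xi_1)=0$; symmetrically $B$ involves only the $v$-component and $\xi_0 = (u_*,0,1)$ has $v=0$, so $B(\xi_0)=0$. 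For the normalisation $B(\xi_1)=1$ one inserts the piecewise formula for $v_*$ from Lemma~\ref{lem:0evalue} and computes $B(\xi_1) = \tfrac{3c_*}{h^3(1-h)}\int_0^h y^2\,dy + \tfrac{3c_*}{h(1-h)^3}\int_h^1(1-y)^2\,dy = \tfrac{c_*}{1-h}+\tfrac{c_*}{h} = \tfrac{c_*}{h(1-h)} = 1$, and $A(\xi_0)=1$ is a longer but equally direct computation using the formula \eqref{eqn:ustar} for $u_*$ together with the value of the $\eta$-coefficient in \eqref{eqn:formula for projection:A}.

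For commutativity I would use the equivalence recorded just before the proposition: $L$ and $P_0$ commute if and only if $A\circ L = B$ and $B\circ L = 0$. Writing $L(u,v,\eta) = (f,g,a)$ with $f = p(y)v(h) - v_y$, $g = u_y - c_*\eta p'(y)$, $a = v(h)/c_*$, the functional $B\circ L$ sees only the middle slot $g$; integrating $\int_0^h y\,g\,dy$ and $\int_h^1 (1-y)\,g\,dy$ by parts, using $u\in H^1$ with $\int_0^h u = \int_h^1 u = 0$ together with the elementary facts $\int_0^h p = 1$, $\int_h^1 p = -1$ and $p(h^+)-p(h^-) = -1/c_*$, the $u(h)$-contributions cancel between the two layers and the remaining terms collapse to $\tfrac{3c_*\eta}{h(1-h)}\big[(p(h^+)-p(h^-)) + \tfrac1h + \tfrac1{1-h}\big] = 0$. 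The identity $A\circ L = B$ is handled the same way: $A\circ L$ sees only $f$ and $a$, and integrating $\int_0^h y^2 f\,dy$ and $\int_h^1(1-y)^2 f\,dy$ by parts reproduces exactly $\tfrac3{h^2(1-h)}\int_0^h yv\,dy + \tfrac3{h(1-h)^2}\int_h^1(1-y)v\,dy = B(u,v,\eta)$ together with a collection of terms proportional to $v(h)$ whose total coefficient vanishes precisely because of the value chosen for the $\eta$-coefficient of $A$; this is the chain of identities sketched in the excerpt (cf.\ \eqref{eqn:motivationforB}), which I would carry out in full.

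The routine but error-prone part, and hence the main obstacle, is this last brute-force verification: tracking every boundary term produced when integrating by parts across the interface $y=h$, where $p$, $p'$, and the piecewise constant $\omega$ all jump, and confirming that the unwieldy $\eta$-coefficient $\big((8(1-h)^4 - 15(1-h)^3 + 5(1-h))\omega_1 - (8h^4 - 15h^3 + 5h)\omega_0\big)/(20h^2(1-h)^2)$ in \eqref{eqn:formula for projection:A} is simultaneously the value that makes the $v(h)$-terms in $A\circ L$ cancel and the value that makes $A(\xi_0)=1$. Once these algebraic identities are verified, $P_0$ meets all the requirements of Theorem~\ref{thm:CMT}.
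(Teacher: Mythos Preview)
Your proposal is correct and takes essentially the same approach as the paper: both verify that the given formulas for $A$ and $B$ satisfy $A\circ L = B$, $B\circ L = 0$, $A(\xi_0)=B(\xi_1)=1$, and $A(\xi_1)=B(\xi_0)=0$, which together yield that $P_0$ is a projection onto $\mathcal{E}_0$ commuting with $L$. The only difference is expository: the paper supplements the brute-force check with a constructive derivation of $A$ and $B$ (building $B$ from a primitive-of-$g$ identity and then defining $A$ so that $A\circ L = B$ by design), whereas you head straight for the verification; both routes reduce to the same algebraic identities.
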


\subsection{Applying the centre manifold theorem}\label{sec:CMT}

We quickly verify the hypotheses of Theorem~\ref{thm:CMT} on the non-linear operator $\mathcal R$. 
First, fix an integer $N \geq 4$ which will not change for the rest of this paper. 
Although Theorem~\ref{thm:CMT} only needs this $N$ to be greater than or equal to $2$, we need it to be greater than or equal to $4$ to make some arguments about the regularity of the solutions we find. 
See, for example, \eqref{eqn:axandbx:bx} and Appendix~\ref{sec:appendix}.

The remainder function $\mathcal R$ defined in \eqref{eqn:defn of L and R} is analytic, so is indeed in $C^N(\mathcal{U}, \mathcal X)$, where $\mathcal{U}$ is the open set defined in \eqref{eqn:defining function spaces}. Furthermore, we obtained $\mathcal R$ as a remainder after linearising in $(u,v,\eta)$, so by construction it satisfies 
\[\mathcal R (0,0,0,0)=0, \quad D_{(u,v,\eta)}\mathcal R (0,0,0,0)=0 .\]

We are now ready to apply Theorem~\ref{thm:CMT} and find solutions to \eqref{eqn:stream2} on the centre manifold.
Let $\eps = c-c_* = c - h(1-h)$, and let $w=(u,v,\eta)$.
By Theorem~\ref{thm:CMT}, there exists $\delta>0$, $\psi \in C^N(\mathcal{E}_0 \times \R, \mathcal{E}_h )$ such that
for all $\eps \in (-\delta,\delta)$, if $w$ satisfies
\begin{equation}\label{eqn:wIsBddSoln}
    \frac{dw}{dx} = Lw + \mathcal{R}(w,\eps), \quad \sup_x \lVert w(x, \, \cdot \, ) - (\eps,0,0) \rVert_{\mathcal{W}} < \delta    
\end{equation}
then it must be of the of the form
\begin{align}\label{eqn:formOfSolnOnCMT}
     w(x,y) = a(x) \xi_0(y) + b(x) \xi_1(y) + \psi \big(a(x) \xi_0 + b(x) \xi_1, \eps \big)(y),
\end{align}
where $\xi_0,\xi_1$ are defined in \eqref{eqn:xi} and $a$ and $b$ are scalar functions. 

We now introduce and clarify some points of notation. Firstly, since $\mathcal{E}_0 \times \R$, the domain of $\psi$, is a 3 dimensional space, we abuse notation slightly, and sometimes consider $\psi$ to be a function on $\R^3$  given by
\[ \psi(a,b,\eps) = \psi(a \xi_0 + b \xi_1, \eps ). \]
Secondly, since $\psi$ takes values in $\mathcal W$, we denote its $u$, $v$ and $\eta$ components as $e_1 \cdot \psi$, $e_2 \cdot \psi$, and $e_3 \cdot \psi$ respectively. 
This means \eqref{eqn:formOfSolnOnCMT} can be written completely equivalently as
\begin{align}
\label{eqn:formOfSolnOnCMTbig}
\begin{pmatrix}
  u(x,y)\\
  v(x,y)\\
  \eta(x)
\end{pmatrix} = a(x) \begin{pmatrix}
  u_*(y)\\
  0\\
  1
\end{pmatrix} + b(x) \begin{pmatrix}
  0\\
  v_*(y)\\
  0
\end{pmatrix} + \begin{pmatrix}
  e_1 \cdot \psi(a(x),b(x),\eps)\\
  e_2 \cdot \psi(a(x),b(x),\eps)\\
  e_3 \cdot \psi(a(x),b(x),\eps)\\
\end{pmatrix}(y).
\end{align} 

Reversibility immediately gives a symmetry on $\psi$. Recall we defined reversibility in \eqref{eqn:UVreversibility}. Using Theorem 3.15 of \cite{HaragusIooss:CMT}, we see that
\begin{equation}\label{eqn:reversibilityOfPsi}
    \psi(a,-b,\eps) = \begin{pmatrix}
  e_1 \cdot \psi(a,b,\eps)\\
  -e_2 \cdot \psi(a,b,\eps)\\
  e_3 \cdot \psi(a,b,\eps)
\end{pmatrix},
\end{equation}  
i.e., that the first and third components of $\psi$ are even in $b$, and the second component is odd in $b$.

In order for a solution of the form in \eqref{eqn:formOfSolnOnCMT} to exist, $a$ and $b$ must satisfy a differential equation, which is found using $P_0$.
Inserting \eqref{eqn:formOfSolnOnCMT} into \eqref{eqn:wIsBddSoln} yields
\begin{equation} \label{eqn:wIsSolution}
\begin{aligned}
    L(w) + \mathcal{R}(w,\eps) &= w_x 
    = a_x \xi_0 + b_x \xi_1 + (D\psi(w,\eps))(a_x \xi_0 + b_x \xi_1, \eps).
\end{aligned}
\end{equation}
We now apply $P_0$ to find the reduced equation \eqref{eqn:reducedEqn}. The composition $P_0\circ \psi = 0$, therefore $P_0 \circ D\psi = 0$. By construction, $P_0$ commutes with $L$, therefore applying $P_0$ to \eqref{eqn:wIsSolution} gives
\begin{equation}
\begin{aligned}\label{eqn:P0w} a_x \xi_0 + b_x \xi_1 &=L(a \xi_0 + b \xi_1 + P_0\psi(a,b,\eps)) + P_0\mathcal R (w,\eps)
= b \xi_0 + P_0\mathcal R (w,\eps)\\
&= b \xi_0 + P_0\mathcal R(a\xi_0 + b\xi_1 + \psi(a,b,\eps),\eps).
\end{aligned}
\end{equation}
We can rewrite \eqref{eqn:P0w} more compactly as
\begin{equation}\label{eqn:directODE}
(a_x,b_x) = F(a,b,\eps).
\end{equation}

Let $e_1\cdot F$ and $e_2\cdot F$ denote the first and second components of $F$ respectively. 
Since $F$ is a composition of $C^N$ functions, it follows that $F \in C^N(\R^3, \R^2)$. 
Note that $F$ inherits the reversibility of $\psi$ shown in \eqref{eqn:reversibilityOfPsi}. 
Specifically, $e_1 \cdot F$ is odd in $b$, and $e_2 \cdot F$ is even in $b$.
This means that if $a(x)$ and $b(x)$ solve \eqref{eqn:directODE} and we define 
\[ \check a(x) = a(-x), \quad \check b(x) = - b(-x), \]
then $\check a(x)$ and $\check b(x)$ also  satisfy \eqref{eqn:directODE}. 

We now find expansions for $\psi$ and $F$.
Since for all $\eps$, the zero function $u=v=\eta=0$ is a solution of \eqref{eqn:stream2}, Theorem~\ref{thm:CMT} tells us that $\psi(0,0,\eps)=0$. 
Since $\psi_a(0,0,0)=\psi_b(0,0,0)=0$, we have
\begin{subequations}
\begin{equation}\label{eqn:orderOfpsi}
    \psi(a,b,\eps)=\mathcal{O}((|a| + |b|)(|a|+|b| + |\eps|)).
\end{equation}
Therefore,
\begin{equation}\label{eqn:orderOfpsi:remainder}
\begin{aligned}
    P_0 \mathcal R (a\xi_0 + b\xi_1 + \psi(a,b,\eps),\eps ) &=P_0  \mathcal R \big(a\xi_0 + b\xi_1 + \mathcal{O}((|a| + |b|)(|a|+|b| + |\eps|) ),\eps \big) \\
    &=P_0\mathcal R(a\xi_0 + b\xi_1,\eps) +\mathcal{O}\big((|a| + |b|)(a^2+b^2 + \eps^2) \big).
\end{aligned}
\end{equation}
\end{subequations}
Notice that we know $P_0\mathcal R(a\xi_0 + b\xi_1,\eps)$ explicitly. Substituting \eqref{eqn:orderOfpsi:remainder} into \eqref{eqn:P0w}, we obtain an expansion for $F$. 
However, appealing to parity can sharpen the order of the error term. 
Since $e_1 \cdot F$ is odd in $b$, and $e_2 \cdot F$ is even in $b$, we deduce
\begin{subequations}\label{eqn:axandbx}
\begin{align}
    \label{eqn:axandbx:ax}
    e_1\cdot F &= b(1+\mathcal{O}(|a|+|b| + |\eps|))\\
    \label{eqn:axandbx:bx}
    e_2 \cdot F&= \frac{3}{h^2(1-h)^2}\eps a + \frac{3 \theta}{2h^2(1-h)^2} a^2 +\mathcal{O}\big( b^2 + (|a| + b^2)(a^2+b^2 + \eps^2) \big),
\end{align}    
\end{subequations}
where recall $\theta$ was defined in \eqref{eqn:defineTheta}.
Notice that these really are the dominating terms, in that their coefficients are non-zero. The terms written here can be found by examining $P_0 \mathcal{R}(a x_0 + b \xi_1)$. Higher order terms can be found explicitly by a recursive method, but for our purposes this is unnecessary.

We now solve \eqref{eqn:directODE} for $a$ and $b$. 
For fixed $\eps$, by Picard--Lindel\"of, \eqref{eqn:directODE} with initial condition $(a(0),b(0))=(a_*,0)$ has a unique solution. By reversibility, this solution curve in the $a,b$ plane is unchanged under reflection about the $a$ axis. 
In other words, for fixed $\eps$, solutions to \eqref{eqn:directODE} that cross the $a$ axis are symmetric about the $a$ axis. This motivates us to focus on solutions where $a$ is even, and $b$ is odd.

We now rescale with the aim of eventually eliminating higher order terms, and proceed having picked $\eps$ to be strictly positive. Let 
\begin{equation}\label{eqn:RescaledCoords}
\tilde{x} = \frac{\sqrt{3 \eps}}{h(1-h)}x, \quad  a(x) = \frac{2\eps}{\theta} \tilde{a}(\tilde{x}), \quad b(x) = \frac{2 \sqrt 3 \eps^{\frac{3}{2}}}{h(1-h)\theta} \tilde{b}(\tilde{x}).    
\end{equation}
When rescaling, we have that $\tilde a = \mathcal O (\eps^{-1})a$, but $\tilde b = \mathcal O (\eps^{-\frac 3 2})b$. 
For conciseness, we write the rescaled version of \eqref{eqn:directODE}
\begin{equation}\label{eqn:rescaledDirectODE}
\begin{aligned}
    (\tilde {a}_{\tilde{x}},\tilde {b}_{\tilde{x}})= \tilde{F}(\tilde a, \tilde b, \eps).
\end{aligned}
\end{equation}
Notice that $\tilde F$ satisfies
\begin{equation}\label{eqn:ordersOfTildeF}
\tilde F (\tilde a , \tilde b , \eps)=\                                       \begin{pmatrix}
        \tilde {b}(1+\mathcal{O}(\eps) )\\
        \tilde {a}+ \tilde{a}^2+\mathcal{O}\big( \eps (|\tilde a|+|\tilde b|^2) \big)
    \end{pmatrix}.
\end{equation}     
When $\eps=0$, \eqref{eqn:ordersOfTildeF} is exactly the KdV equation for travelling waves, a nonlinear equation. We expect the solution to persist as $\eps$ moves away from $0$, and describe such a solution as \emph{weakly nonlinear}.
There are several ways to show that the solutions do indeed persist, for example, by considering the stable and unstable manifolds of $\tilde F$ at the fixed point at the origin. 
Instead, we adapt an argument of Kirchg\"assner \cite{Kirchgassner:bifurcation}. 
\begin{prop}\label{thm:aepsbepsExist}
    For all sufficiently small $\eps \geq 0$, \eqref{eqn:rescaledDirectODE} has a solution $(\tilde{a}^\eps,\tilde{b}^\eps)$ with the following properties.
    \begin{enumerate}[label=\rm(\alph*)]
        \item $\tilde a^\eps$ is even and $\tilde b^\eps$ is odd.
        \item $(\tilde a^\eps,\tilde b^\eps)$ is homoclinic to 0.
        \item $(\tilde a^\eps,\tilde b^\eps) \in C^N(\R)$.
        \item  The explicit formula for $(\tilde{a}^0,\tilde{b}^0)$, is given by
\begin{equation}\label{eqn:atilde0btilde0}
    \tilde a^0( \tilde x) = -\tfrac 32 \sech^2 \left(\tfrac 12 \tilde x \right), \quad \tilde b^0 (\tilde x) = \tfrac 32 \tanh \left(\tfrac 12 \tilde x\right) \sech^2 \left(\tfrac 12 \tilde x\right).
\end{equation}
        \item The map $\eps \mapsto \tilde{a}^\eps$ is in $C^{N-1}(\R,C^2(\R))$.
        \item\label{thm:atildebtildeLipschitz} $(\tilde a^\eps,\tilde b^\eps) =(\tilde a^0,\tilde b^0) + \mathcal O (\eps) $, where the $\mathcal O$ is with respect to the $C^1(\R)$ norm.
    \end{enumerate}
\begin{proof}
    We argue as in Proposition~5.1 of  \cite{Kirchgassner:bifurcation}. For more details, see Appendix~\ref{sec:appendix}. 
\end{proof}
\end{prop}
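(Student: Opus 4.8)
The plan is to continue the explicit KdV homoclinic orbit from $\eps=0$ by an implicit-function-theorem argument in a space of exponentially decaying \emph{even} functions, which is the scheme of Proposition~5.1 of \cite{Kirchgassner:bifurcation} (rather than the stable/unstable manifold approach mentioned just above the statement). First I would use the first component of \eqref{eqn:rescaledDirectODE}: since $e_1\cdot\tilde F = \tilde b(1+\mathcal O(\eps))$, the factor multiplying $\tilde b$ is bounded away from $0$ for $\eps$ small, so we may solve for $\tilde b = \tilde b(\tilde a,\tilde a_{\tilde x},\eps) = \tilde a_{\tilde x}(1+\mathcal O(\eps))$ and substitute into the second component to obtain a scalar second-order equation
\[
   \tilde a_{\tilde x\tilde x} = \tilde a + \tilde a^2 + \eps\,\mathcal N(\tilde a,\tilde a_{\tilde x},\eps),
\]
whose $\eps=0$ member is the KdV travelling-wave equation with the known solution $\tilde a^0$ of \eqref{eqn:atilde0btilde0}. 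Writing $\tilde a = \tilde a^0 + \phi$, this becomes $\mathcal L_0\phi = \mathcal Q(\phi,\eps)$, where $\mathcal L_0 = -\partial_{\tilde x}^2 + 1 + 2\tilde a^0 = -\partial_{\tilde x}^2 + 1 - 3\sech^2(\tilde x/2)$ is the linearisation and $\mathcal Q(\phi,\eps) = \mathcal O(\phi^2) + \mathcal O(\eps)$.

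Next I would establish that $\mathcal L_0$ is boundedly invertible between the even, exponentially weighted spaces (say $C^{k+2}$ and $C^{k}$ of functions decaying like $e^{-\mu|\tilde x|}$, any fixed $0<\mu<1$). On such spaces the asymptotic operator $-\partial_{\tilde x}^2+1$ has no spectrum on the shifted imaginary axis provided $\mu<1$, so $\mathcal L_0$ is Fredholm of index $0$; and the space of decaying solutions of $\mathcal L_0\psi = 0$ is one-dimensional, spanned by the translation mode $(\tilde a^0)_{\tilde x}\propto \sech^2(\tilde x/2)\tanh(\tilde x/2)$, which is \emph{odd}. Hence $\mathcal L_0$ has trivial kernel, and therefore is an isomorphism, on the even subspace. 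With this, the map $(\phi,\eps)\mapsto \mathcal L_0\phi - \mathcal Q(\phi,\eps)$ vanishes at $(0,0)$ with invertible $\phi$-derivative $\mathcal L_0$ there and has one fewer derivative than $\tilde F$ (the loss of one derivative, ultimately the source of the $C^{N-1}$ in part (e), is bookkept in the appendix), so the implicit function theorem produces a $C^{N-1}$ branch $\eps\mapsto\phi^\eps$ with $\phi^0 = 0$. Setting $\tilde a^\eps = \tilde a^0 + \phi^\eps$ and recovering $\tilde b^\eps$ then gives everything: part (a) because evenness of $\tilde a^\eps$ is built into the function space and oddness of $\tilde b^\eps$ follows from $\tilde b^\eps = \tilde a^\eps_{\tilde x}(1+\mathcal O(\eps))$ together with the reversibility of \eqref{eqn:rescaledDirectODE}; part (b) because the weight forces decay; part (d) because the $\eps=0$ member is $\tilde a^0$; parts (e) and (f) directly from the IFT branch and its Lipschitz dependence on $\eps$ in $C^1$. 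For part (c), once $(\tilde a^\eps,\tilde b^\eps)$ solves \eqref{eqn:rescaledDirectODE} with $\tilde F\in C^N$, a routine bootstrap on the ODE gives $(\tilde a^\eps,\tilde b^\eps)\in C^N(\R)$ (in fact $C^{N+1}$).

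I expect the invertibility of $\mathcal L_0$ on the even weighted subspace to be the crux. The Fredholm/index-$0$ part is routine since $1$ lies in a resolvent gap, but pinning down the kernel requires either the explicit Pöschl–Teller analysis (the potential $-3\sech^2(\tilde x/2)$ is reflectionless, and $(\tilde a^0)_{\tilde x}$ is the unique decaying solution of $\mathcal L_0\psi = 0$) or a reduction-of-order argument from the known solution $(\tilde a^0)_{\tilde x}$ showing the independent solution grows at $\pm\infty$. If the weighted-space machinery proves cumbersome, the reversible-dynamics alternative is available: for $\eps$ small the origin is a hyperbolic saddle of $\tilde F(\cdot,\cdot,\eps)$ whose $C^{N-1}$-in-$\eps$ unstable manifold $W^u(\eps)$ meets the reversibility set $\{\tilde b = 0\}$; at $\eps = 0$ this meeting occurs at $(-\tfrac32,0)$ and is transverse (the orbit velocity there is $(0,\tfrac34)$), so it persists, the orbit through the perturbed crossing point lies in $W^u(\eps)\cap W^s(\eps)$ and is the desired $R$-symmetric homoclinic, and properties (a)–(f) follow from smooth dependence of invariant manifolds and solutions on parameters.
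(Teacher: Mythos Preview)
Your proposal is correct and follows essentially the same Kirchg\"assner scheme as the paper: eliminate $\tilde b$ to obtain the scalar equation $\tilde a_{\tilde x\tilde x} = \tilde a + \tilde a^2 + \eps R$, write $\tilde a = \tilde a^0 + z$, and apply the implicit function theorem in a space of even functions, the key point being that the linearisation has one-dimensional decaying kernel spanned by the odd translation mode $\tilde a^0_{\tilde x}$.

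The only difference is in the choice of function space. You propose exponentially weighted even $C^k$ spaces, getting Fredholm index~0 from the asymptotic hyperbolicity of $-\partial_{\tilde x}^2+1$. The paper instead works in the unweighted spaces $C^2_\bddeven(\R)$ and $C^0_\bddeven(\R)$ of merely bounded even functions: it inverts $\partial_{\tilde x}^2-1$ via the explicit Green's operator $K_1f = -\tfrac12\int e^{-|\cdot-t|}f(t)\,dt$ and then exploits the exponential decay of $\tilde a^0$ to show that $K_2z := 2K_1(\tilde a^0 z)$ is compact on $C^0_\bddeven$, so that $I-K_2$ is Fredholm of index~0 by the Fredholm alternative. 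The kernel computation is then carried out by a reduction-of-order trick (multiply $z_{\tilde x\tilde x}-z-2\tilde a^0 z=0$ by $\tilde a^0_{\tilde x}$ and integrate) rather than the P\"oschl--Teller analysis. Your weighted-space route gives part~(b) for free from the weight, whereas in the paper's unweighted setting decay has to be read off separately; conversely, the paper avoids setting up the weighted machinery. Both are standard and equivalent here.
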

\begin{rk}\label{thm:uniqe0oftildeb}
    Since the phase portrait of $\tilde F$ is symmetric about the line $\tilde b =0$, the only zeroes of $\tilde b^\eps$ are at $\tilde x =0$.
    Indeed, if $\tilde b^\eps$ vanished at two distinct points, then $\tilde b^\eps$ would be periodic rather than homoclinic.
    The oddness of $\tilde b^\eps$ implies that $\tilde b ^\eps(0) =0$, therefore this zero is unique.
\end{rk}

\subsection{Regularity and estimates}

We now undo the scaling from \eqref{eqn:RescaledCoords} to find functions $a^\eps(x)$, $b^\eps(x)$ satisfying \eqref{eqn:directODE}. By conclusion~\ref{thm:CMT:conclusion:reduced solutions} of Theorem~\ref{thm:CMT}, inserting these into the reduction function does indeed give a solution to \eqref{eqn:stream2}.
Then undoing the coordinate change \eqref{eqn:coords} gives us solutions to \eqref{eqn:stream}.

We explicitly find the leading order terms in $C^0(\R, H^1((0,1)))$, and show that the solution is small in $C^1(\R, H^1((0,1)))$.
\begin{prop}\label{thm:uvetaapprox}
    For all sufficiently small $\eps\geq0$, there exist functions $u^\eps(x,y)$, $v^\eps(x,y)$, $\eta^\eps(x)$ such that:
    \begin{enumerate}[label=\rm(\alph*)]
        \item $(u^\eps,v^\eps,\eta^\eps; h(1-h)+\eps)$ solves \eqref{eqn:stream2}.
        \item $u^\eps$ and $\eta^\eps$ are even in $x$, $v^\eps$ is odd in $x$.
        \item\label{thm:uvetaapprox:leadingTerms} In $C^0(\R, H^1((0,1)))$, we have 
            \begin{subequations}\label{eqn:leadingOrderSolnuveta}
                \begin{align}
                    u^\eps(x,y) &= -\frac{3\eps}{\theta} \sech^2 \Big(\frac{\sqrt{3\eps}}{2h(1-h)} x\Big)u_*(y) + \mathcal{O}(\eps^2)\\
                    v^\eps(x,y) &= \frac{3 \sqrt 3 \eps^{\frac{3}{2}}}{h(1-h)\theta} \tanh \Big(\frac{\sqrt{3\eps}}{2h(1-h)} x\Big) \sech^2 \Big(\frac{\sqrt{3\eps}}{2h(1-h)} x\Big)v_*(y) + \mathcal{O}( \eps^{\frac 52})\\
                    \label{eqn:leadingOrderSolnuveta:eta}\eta^\eps(x) &= -\frac{3\eps}{\theta} \sech^2 \Big(\frac{\sqrt{3\eps}}{2h(1-h)} x\Big) + \mathcal{O}(\eps^2).    
                \end{align}
            \end{subequations}
        \item These functions satisfy the estimates             
        \begin{subequations}\label{eqn:SizeOfuxvxetax}
            \begin{gather}
            \label{eqn:SizeOfuxvxetax:x}
            \sup_x \lVert u^\eps_x(x, \, \cdot \, ) \rVert_{H^1} = \mathcal{O}(\eps^{\frac 32}), \quad
            \sup_x \lVert v^\eps_x(x, \, \cdot \, ) \rVert_{H^1} = \mathcal{O}(\eps^2), \quad
            \sup_x |\eta^\eps_x(x)| = \mathcal{O}(\eps^{\frac 32})\\
            \label{eqn:SizeOfuyvy}
            \sup_{x,y}|u^\eps_y(x,y)|=\mathcal{O}(\eps), \quad
            \sup_{x,y}|v^\eps_y(x,y)|=\mathcal{O}(\eps).
            \end{gather}
         \end{subequations}
        \item These solutions are homoclinic to 0 in $H^1$, i.e., for any fixed $\eps$, we have \begin{equation}\label{eqn:uvetahomoclinic}
            \lim_{x \to \infty} \lVert (u^\eps(x, \, \cdot \, ),v^\eps(x, \, \cdot \, ),\eta^\eps(x))\rVert_{H^1}=\lim_{x \to -\infty} \lVert (u^\eps(x, \, \cdot \, ),v^\eps(x, \, \cdot \, ),\eta^\eps(x))\rVert_{H^1}=0.
        \end{equation}
    \end{enumerate}
\begin{proof}
Applying the inverse of the scaling from \eqref{eqn:RescaledCoords}, and appealing to Proposition~\ref{thm:aepsbepsExist}\ref{thm:atildebtildeLipschitz} yields
\begin{align*}
    a^\eps(x) &= -\frac{3\eps}{\theta} \sech^2 \Big(\frac{\sqrt{3\eps}}{2h(1-h)} x\Big) + \mathcal{O}(\eps^2)\\
    b^\eps(x) &= \frac{3 \sqrt 3 \eps^{\frac{3}{2}}}{h(1-h)\theta} \tanh \Big(\frac{\sqrt{3\eps}}{2h(1-h)} x\Big) \sech^2 \Big(\frac{\sqrt{3\eps}}{2h(1-h)} x\Big) + \mathcal{O}( \eps^{\frac 52}),
\end{align*}
where the $\mathcal O$ is with respect to the $C^1$ norm. We now consider \eqref{eqn:formOfSolnOnCMTbig}. 
This, the parity of the components of $\psi$ with respect to $b$, and the parity of $a^\eps$ and $b^\eps(x)$ gives us the required parity on $u^\eps$, $v^\eps$ and $\eta^\eps$.  Using \eqref{eqn:orderOfpsi}, and the fact that the oddness of $e_2 \cdot \psi$ with respect to $b$ gives us that $e_2 \cdot \psi= b\mathcal{O}(|a|+|b|+|\eps|)$, we see that
\begin{align*}
    u^\eps(x,y) &= -\frac{3\eps}{\theta} \sech^2 \Big(\frac{\sqrt{3\eps}}{2h(1-h)} x\Big)u_*(y) + \mathcal{O}(\eps^2)\\
    v^\eps(x,y) &= \frac{3 \sqrt 3 \eps^{\frac{3}{2}}}{h(1-h)\theta} \tanh \Big(\frac{\sqrt{3\eps}}{2h(1-h)} x\Big) \sech^2 \Big(\frac{\sqrt{3\eps}}{2h(1-h)} x\Big)v_*(y) + \mathcal{O}( \eps^{\frac 52})\\
    \eta^\eps(x) &= -\frac{3\eps}{\theta} \sech^2 \Big(\frac{\sqrt{3\eps}}{2h(1-h)} x\Big) + \mathcal{O}(\eps^2),
\end{align*}
where the $\mathcal O$s are with respect to the $C^0(\R, H^1((0,1)))$ norm. Differentiating \eqref{eqn:formOfSolnOnCMTbig} with respect to $x$ gives
\begin{align}\nonumber 
    \begin{pmatrix}
      u_x(x,y)\\
      v_x(x,y)\\
      \eta_x(x)
    \end{pmatrix}
   & = a^\eps_x(x) \begin{pmatrix}
      u_*(y)\\
      0\\
      1
    \end{pmatrix}
    +  b^\eps_x(x) \begin{pmatrix}
      0\\
      v_*(y)\\
      0
    \end{pmatrix}
    + \ a^\eps_x(x) \psi_a (a^\eps(x),b^\eps(x),\eps)(y) \\ &
    \qquad +   b^\eps_x(x)\psi_b (a^\eps(x),b^\eps(x),\eps)(y). \label{eqn:formOfSolnOnCMTbigDiffed}
\end{align}
We deduce \eqref{eqn:SizeOfuxvxetax:x} from this, and the fact $D \psi=\mathcal O (|a|+|b|+|\eps|)$. For the other two estimates, notice that we can solve \eqref{eqn:stream2:divfree2} and \eqref{eqn:stream2:lap2} for $u_y$ and $v_y$, as the determinant of the linear system is nonzero so long as $\eta,\eta_x$ are sufficiently small. 
Doing so shows that  $(u^\eps_y(x,y),v^\eps_y(x,y))$ is an analytic function of $\eps$, $u^\eps(x,y)$, $u^\eps(x,h)$, $u^\eps_x(x,y)$, $v^\eps(x,y)$, $v^\eps(x,h)$, $v^\eps_x(x,y)$, $\eta^\eps(x)$, and $\eta^\eps_x(x)$. Calling this function $f\colon \R^9 \to \R^2$, we know all the arguments of $f$ are $\mathcal{O}(\eps)$ uniformly in $x$ and $y$. 
Since $f(0)=0$,
we see we have the estimates \eqref{eqn:SizeOfuyvy}.

Finally, these solutions are homoclinic to 0 because $(\tilde a^\eps, \tilde b^\eps)$ is homoclinic to 0, thus $(a^\eps,b^\eps)$ is homoclinic to 0. Since $\psi(0,0,\eps)=0$, we have $(u^\eps,v^\eps,\eta^\eps)$ is homoclinic to 0.
\end{proof}
\end{prop}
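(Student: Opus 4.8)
The plan is to feed the homoclinic orbit of the reduced ODE furnished by Proposition~\ref{thm:aepsbepsExist} through the centre manifold reduction and then transport the resulting estimates back through the two coordinate changes \eqref{eqn:RescaledCoords} and \eqref{eqn:coords}. First I would take the solution $(\tilde a^\eps,\tilde b^\eps)$ of the rescaled equation \eqref{eqn:rescaledDirectODE} and undo the rescaling \eqref{eqn:RescaledCoords} to produce $(a^\eps,b^\eps)$ solving \eqref{eqn:directODE}. Since $(\tilde a^\eps,\tilde b^\eps)$ is bounded --- indeed homoclinic to $0$ --- the amplitude of $(a^\eps,b^\eps)$ is $\mathcal O(\eps)$, so for $\eps$ small the orbit $x \mapsto a^\eps(x)\xi_0+b^\eps(x)\xi_1$ remains inside the neighbourhood $\mathcal V_w\cap\mathcal E_0$ on which Theorem~\ref{thm:CMT} is stated. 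Because \eqref{eqn:directODE} is precisely the reduced equation \eqref{eqn:reducedEqn} expressed in the basis $\{\xi_0,\xi_1\}$, conclusion~\ref{thm:CMT:conclusion:reduced solutions} of Theorem~\ref{thm:CMT} gives that $w^\eps \colonequals a^\eps\xi_0+b^\eps\xi_1+\psi(a^\eps,b^\eps,\eps)$ solves the evolution equation \eqref{eqn:introducingw}. Reading off the three components via \eqref{eqn:formOfSolnOnCMTbig} defines $u^\eps,v^\eps,\eta^\eps$, and since $w^\eps$ has amplitude $\mathcal O(\eps)$ it stays inside the open set $\mathcal U$ where the denominators in \eqref{F} do not vanish, so unwinding \eqref{eqn:stream3} back to \eqref{eqn:stream2} is legitimate; this proves part~(a). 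Part~(b) is then immediate from the parity of $a^\eps$ (even) and $b^\eps$ (odd) in Proposition~\ref{thm:aepsbepsExist}(a) together with the reversibility symmetry \eqref{eqn:reversibilityOfPsi} of $\psi$.

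For part~(c) I would substitute the explicit profiles $(\tilde a^0,\tilde b^0)$ from Proposition~\ref{thm:aepsbepsExist}(d) and the $C^1(\R)$ bound $(\tilde a^\eps,\tilde b^\eps)=(\tilde a^0,\tilde b^0)+\mathcal O(\eps)$ of part~(f) into the inverse of \eqref{eqn:RescaledCoords}, which gives the stated $\sech^2$ and $\tanh\sech^2$ profiles for $a^\eps$ and $b^\eps$ with errors $\mathcal O(\eps^2)$ and $\mathcal O(\eps^{5/2})$ in $C^1(\R)$. The contribution of $\psi$ is controlled by the order bound \eqref{eqn:orderOfpsi}: from $a^\eps=\mathcal O(\eps)$ and $b^\eps=\mathcal O(\eps^{3/2})$ one gets $e_1\cdot\psi,\,e_3\cdot\psi=\mathcal O(\eps^2)$, while the oddness of $e_2\cdot\psi$ in $b$ means $e_2\cdot\psi=b^\eps\,\mathcal O(\eps)=\mathcal O(\eps^{5/2})$; inserting these into \eqref{eqn:formOfSolnOnCMTbig} yields \eqref{eqn:leadingOrderSolnuveta} in $C^0(\R,H^1((0,1)))$. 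Part~(e) comes out at the same time: $(\tilde a^\eps,\tilde b^\eps)\to0$ at $\pm\infty$ forces $(a^\eps,b^\eps)\to0$, and the continuity of $\psi$ with $\psi(0,0,\eps)=0$ then gives $\psi(a^\eps(x),b^\eps(x),\eps)\to0$ in $\mathcal W$, so the triple is homoclinic to $0$ in $H^1$.

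The estimates in part~(d) carry most of the weight. For the $x$-derivatives I would differentiate \eqref{eqn:formOfSolnOnCMTbig} to obtain \eqref{eqn:formOfSolnOnCMTbigDiffed}; the rescaling gives $a^\eps_x=\mathcal O(\eps^{3/2})$ and $b^\eps_x=\mathcal O(\eps^2)$ directly, and since $D\psi=\mathcal O(|a|+|b|+|\eps|)=\mathcal O(\eps)$ along the orbit the cross terms $a^\eps_x\psi_a$ and $b^\eps_x\psi_b$ are of strictly higher order, giving \eqref{eqn:SizeOfuxvxetax:x}. For the $y$-derivatives the reduction function cannot be differentiated in $y$, so instead I would return to the interior equations \eqref{eqn:stream2:divfree2}--\eqref{eqn:stream2:lap2}, regard them as a $2\times2$ linear system for $(u_y,v_y)$ whose determinant is nonzero once $\eta,\eta_x$ are small, and solve; this exhibits $(u^\eps_y,v^\eps_y)$ as an analytic function, vanishing at the origin, of $\eps$ and of $u^\eps,u^\eps(x,h),u^\eps_x,v^\eps,v^\eps(x,h),v^\eps_x,\eta^\eps,\eta^\eps_x$, each $\mathcal O(\eps)$ uniformly by part~(c) and \eqref{eqn:SizeOfuxvxetax:x}, whence \eqref{eqn:SizeOfuyvy}. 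I expect the bookkeeping of $\eps$-powers to be the main obstacle: one must keep the three coordinate systems straight, use the parity of each component of $\psi$ in $b$ to gain the extra half-power of $\eps$ in the $v$-estimates, and check the qualitative passage from the abstract centre-manifold solution to a bona fide solution of \eqref{eqn:stream2} --- namely that the small amplitude keeps the orbit both inside $\mathcal U$ and inside the neighbourhood on which Theorem~\ref{thm:CMT} applies.
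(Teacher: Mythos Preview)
Your proposal is correct and follows essentially the same route as the paper: undo the rescaling to get the explicit leading-order profiles for $(a^\eps,b^\eps)$, feed these through \eqref{eqn:formOfSolnOnCMTbig} using the order bound \eqref{eqn:orderOfpsi} and the parity of $\psi$ (in particular the oddness of $e_2\cdot\psi$ in $b$ for the extra $\eps^{1/2}$ in the $v$-estimate), differentiate \eqref{eqn:formOfSolnOnCMTbig} for the $x$-derivative bounds, and solve \eqref{eqn:stream2:divfree2}--\eqref{eqn:stream2:lap2} as a $2\times 2$ linear system in $(u_y,v_y)$ for the pointwise $y$-derivative bounds. If anything you are slightly more careful than the paper in spelling out why the orbit stays inside both $\mathcal V_w$ and $\mathcal U$ so that Theorem~\ref{thm:CMT}\ref{thm:CMT:conclusion:reduced solutions} and the passage back to \eqref{eqn:stream2} are justified.
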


We would like to convert Proposition~\ref{thm:uvetaapprox} into statements about $U^\eps$, $V^\eps$, and $\eta^\eps$. Before we do this, however, it is useful to determine their regularity.
Viewing $(u,v,\eta)$ as a function of $x$, the centre manifold theorem tells us $(u^\eps,v^\eps,\eta^\eps) \in C^N(\R,\mathcal{X})$ for $\eps$ sufficiently small. This gives us immediately that $\eta \in C^N(\R)$.
Showing $U^\eps$ and $V^\eps$ are analytic is a little more involved. Firstly we show that $u^\eps$ and $v^\eps$ are in $H^1$. We can then use the smoothness of the coordinate change \eqref{eqn:coords} to show that $U^\eps$ and $V^\eps$ are in $H^1$. This is enough regularity on the functions to mean we can apply elliptic regularity results, in particular, by examining \eqref{eqn:stream}, we see $U^\eps$ and $V^\eps$ are harmonic, and thus real analytic, away from the interface.

We begin with the following lemma.
\begin{lem}\label{thm:uvareH1}
For any $M>0$, we have $u^\eps,v^\eps \in H^1((-M,M)\times (0,1))$.
\begin{proof}
We know that for each $x$, the function $y \mapsto u^\eps(x,y)$ has a weak derivative in $L^2((0,1))$, which we call $u_y(x,y)$. We also know from the centre manifold theorem that $\lVert u(x, \, \cdot \, ) \rVert_{H^1((0,1))}$ varies continuously with $x$.
We have that $u^\eps_y( \, \cdot \, , \, \cdot \, ) \in L^2((-M,M)\times (0,1))$, as
\begin{align*}
    \int_{-M}^{M} \int_0^1 u^\eps_y(x,y)^2 \; dy \; dx &\leq \int_{-M}^{M} \lVert u^\eps(x, \, \cdot \, ) \rVert^2_{H^1((0,1))} \; dx \\
    &\leq 2M \sup_{x \in [-M,M]}\lVert u^\eps(x, \, \cdot \, ) \rVert^2_{H^1((0,1))} 
    < \infty
\end{align*}
by continuity. A straightforward application of Fubini's theorem then shows that $u^\eps_y$ is the weak derivative of $u^\eps$ in $H^1((-M,M)\times (0,1))$.

We now find the $x$ derivative. The fact that $u^\eps \in C^1(\R, H^1((0,1))$ means that for each $x$, there exists a function of $y$ in $H^1((0,1))$, which we call $u^\eps_x(x,y)$, such that $\lVert u^\eps_x(x, \, \cdot \, ) \rVert_{H^1((0,1))}$ varies continuously with $x$, and that
\begin{align*}
    \lim_{h \to 0}\frac{\lVert u^\eps(x+h, \, \cdot \, )-u^\eps(x, \, \cdot \, )-u^\eps_x(x, \, \cdot \, )h \rVert_{H^1((0,1))}}{|h|} = 0.
\end{align*}
Therefore, since the trace map is bounded on $H^1((0,1))$, we have that for all $y$, 
\begin{align*}
    \frac{\lvert u^\eps(x+h,y)-u^\eps(x,y)-u^\eps_x(x,y)h \rvert}{|h|} \to 0,
\end{align*}
or in other words, $u^\eps_x(x,y)$ is the classical partial derivative of $u^\eps(x,y)$, so it is certainly the weak derivative as well.  We have that $u^\eps_x( \, \cdot \, , \, \cdot \, ) \in L^2((-M,M)\times (0,1))$, as
\begin{align}
\begin{split}\label{eqn:uxIsL2}
    \int_{-M}^{M} \int_0^1 u^\eps_x(x,y)^2 \; dy \; dx &= \int_{-M}^{M} \lVert u^\eps_x(x, \, \cdot \, ) \rVert^2_{L^2((0,1))} \; dx \\
    &\leq \int_{-M}^{M} \lVert u^\eps_x(x, \, \cdot \, ) \rVert^2_{H^1((0,1))} \; dx \\
    &\leq 2M \sup_{x \in [-M,M]}\lVert u^\eps_x(x, \, \cdot \, ) \rVert^2_{H^1((0,1))} < \infty
\end{split}
\end{align}
by continuity. Rewriting \eqref{eqn:uxIsL2} with $u^\eps_x$ replaced by $u^\eps$ shows that $u^\eps$ is in $ L^2((-M,M)\times (0,1))$ as well.
Therefore, since first order weak derivatives of $u^\eps$ exist, and they and $u^\eps$ are in  $ L^2((-M,M)\times (0,1))$, we have that  $u^\eps \in  H^1((-M,M)\times (0,1))$ as required.

The case for $v^\eps$ follows similarly.
\end{proof}
\end{lem}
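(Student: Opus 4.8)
The plan is to prove the claim for $u^\eps$ — the argument for $v^\eps$ being word-for-word identical, using $H^1_0((0,1))\subset H^1((0,1))$ — by exhibiting the two first-order weak partial derivatives of $u^\eps$ as a function on the rectangle $(-M,M)\times(0,1)$ and then checking that $u^\eps$ together with these derivatives is square-integrable there. The key structural input, which I would invoke at the outset, is that by the centre manifold construction (see \eqref{eqn:formOfSolnOnCMTbig} and Proposition~\ref{thm:uvetaapprox}) the map $x\mapsto u^\eps(x,\,\cdot\,)$ lies in $C^1(\R,H^1((0,1)))$; hence $x\mapsto u^\eps_x(x,\,\cdot\,)$ lies in $C^0(\R,H^1((0,1)))$, and the scalar maps $x\mapsto\lVert u^\eps(x,\,\cdot\,)\rVert_{H^1((0,1))}$ and $x\mapsto\lVert u^\eps_x(x,\,\cdot\,)\rVert_{H^1((0,1))}$ are continuous, hence bounded on the compact interval $[-M,M]$.

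\textbf{The $y$-direction.} For each fixed $x$ the slice $u^\eps(x,\,\cdot\,)\in H^1((0,1))$ has a weak derivative $u^\eps_y(x,\,\cdot\,)\in L^2((0,1))$; I would assemble these into a function $u^\eps_y$ on the rectangle, whose measurability follows by realising it as an a.e.\ limit of difference quotients of the $x$-continuous function $u^\eps$. The boundedness above then gives
\[
\int_{-M}^{M}\!\!\int_0^1 |u^\eps_y(x,y)|^2\,dy\,dx \;\le\; \int_{-M}^{M}\lVert u^\eps(x,\,\cdot\,)\rVert_{H^1((0,1))}^2\,dx \;\le\; 2M\sup_{[-M,M]}\lVert u^\eps(x,\,\cdot\,)\rVert_{H^1((0,1))}^2 \;<\;\infty ,
\]
and the same estimate with $u^\eps_y$ replaced by $u^\eps$ shows $u^\eps\in L^2((-M,M)\times(0,1))$. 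To verify that $u^\eps_y$ is genuinely the distributional $y$-derivative of $u^\eps$ on the rectangle, I would test against $\varphi\in C^\infty_c((-M,M)\times(0,1))$: Fubini — justified by the $L^2$ bounds just obtained — reduces $\iint u^\eps\,\partial_y\varphi$ to $\int_{-M}^M\big(\int_0^1 u^\eps(x,y)\,\partial_y\varphi(x,y)\,dy\big)\,dx$, and for each fixed $x$ one integrates by parts in $y$ using $u^\eps(x,\,\cdot\,)\in H^1((0,1))$ and $\varphi(x,\,\cdot\,)\in C^\infty_c((0,1))$.

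\textbf{The $x$-direction.} Here I would use the continuous embedding $H^1((0,1))\hookrightarrow C^0([0,1])$. Since the difference quotients $\tfrac1\sigma\big(u^\eps(x+\sigma,\,\cdot\,)-u^\eps(x,\,\cdot\,)\big)$ converge to $u^\eps_x(x,\,\cdot\,)$ in $H^1((0,1))$ as $\sigma\to0$, they converge uniformly in $y$, so $u^\eps_x(x,y)$ is in fact the classical — hence distributional — partial $x$-derivative of $u^\eps$ at every point $(x,y)$. The analogous estimate
\[
\int_{-M}^{M}\!\!\int_0^1 |u^\eps_x(x,y)|^2\,dy\,dx \;\le\; 2M\sup_{[-M,M]}\lVert u^\eps_x(x,\,\cdot\,)\rVert_{H^1((0,1))}^2 \;<\;\infty
\]
then gives $u^\eps_x\in L^2((-M,M)\times(0,1))$. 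Since $u^\eps$ and both of its first-order weak partials lie in $L^2((-M,M)\times(0,1))$, this yields $u^\eps\in H^1((-M,M)\times(0,1))$, and the same argument delivers $v^\eps\in H^1((-M,M)\times(0,1))$.

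\textbf{Main obstacle.} None of the estimates here are deep; the only point requiring care is the bookkeeping that the slicewise derivatives $u^\eps_y$ and $u^\eps_x$ genuinely serve as the two-variable weak derivatives of $u^\eps$ on the rectangle, which is precisely where Fubini and the embedding $H^1((0,1))\hookrightarrow C^0([0,1])$ enter. A slicker but less self-contained alternative would be to invoke the standard identification $H^1((-M,M);L^2((0,1)))\cap L^2((-M,M);H^1((0,1)))=H^1((-M,M)\times(0,1))$ and observe that $C^1([-M,M];H^1((0,1)))$ embeds continuously into each factor; I would mention this but carry out the explicit argument above.
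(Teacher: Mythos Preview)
Your proposal is correct and follows essentially the same approach as the paper: both use $u^\eps\in C^1(\R,H^1((0,1)))$ to bound the slicewise norms on $[-M,M]$, invoke Fubini for the $y$-derivative, and use the embedding $H^1((0,1))\hookrightarrow C^0([0,1])$ (the paper phrases this as boundedness of the trace map) to upgrade the $H^1$-valued Fr\'echet derivative to a classical pointwise $x$-derivative. Your treatment is slightly more careful about measurability and mentions the $H^1(I;L^2)\cap L^2(I;H^1)$ identification as an alternative, but the substance is the same.
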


We now use this to show a similar result for $U^\eps$ and $V^\eps$.
\begin{lem}\label{thm:UVareH1}
    For all $M>0$, we have $U^\eps,V^\eps \in  H^1((-M,M)\times (0,1))$.
\begin{proof}
Using Lemma~\ref{thm:uvareH1}, it is straightforward to show that \[(u^\eps+\omega(y-h)+c)Y_y^{-1} \in  H^1((-M,M)\times (0,1)).\]
Now in order to show that $U^\eps$ and $V^\eps$ are in $H^1((-M,M)\times (0,1))$, all we need to show is that composition with the change of coordinates gives a bounded linear map from $H^1((-M,M)\times (0,1))$ to itself.
Recall the change of coordinates \eqref{eqn:coords:x}--\eqref{eqn:coords:y}.
Given $\mathfrak{u} \in H^1 \cap C^\infty$, we let \[U(X,Y)=\mathfrak{u}(x(X,Y),y(X,Y)).\] 
It can be seen that that 
\begin{equation}
\begin{aligned}
    \lVert U \rVert_{H^1}^2 &\leq \frac{2 \sup (1+y_X^2+y_Y^2)}{\inf |y_Y|} \lVert \mathfrak u \rVert_{H^1}^2\\&
    =\frac{2 (1+\mathcal{O}(\eps^3)+1+\mathcal{O}(\eps))}{1-\mathcal{O}(\eps)} \lVert \mathfrak u \rVert_{H^1}^2
    \leq 5 \lVert \mathfrak u \rVert_{H^1}^2,
\end{aligned}    
\end{equation}
where the second equality uses the formula for $y$, \eqref{eqn:coords:y}, and the estimates on $\eta^\eps$ in \eqref{eqn:leadingOrderSolnuveta:eta} and \eqref{eqn:SizeOfuxvxetax:x}.
We deduce the change of coordinates is a bounded linear map on $ H^1 \cap C^\infty$, but this is a dense subspace of $H^1$, so we have our required result.
\end{proof}
\end{lem}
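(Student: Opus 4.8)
The plan is to transfer the $H^1$ regularity of $u^\eps,v^\eps$ established in Lemma~\ref{thm:uvareH1} to the physical velocity components $U^\eps,V^\eps$ through the coordinate change \eqref{eqn:coords}. Rearranging \eqref{eqn:coords:u}, the velocity expressed in the new variables is $(u^\eps(x,y)+\omega(y-h)+c)\,Y_y^{-1}$, where $Y_y^{-1}=y_Y$ is the explicit rational function of $(\eta,y)$ given in \eqref{eqn:yderivatives}. Since $\eta^\eps\in C^N(\R)$ and, by Proposition~\ref{thm:uvetaapprox}, $\eta^\eps$ and $\eta^\eps_x$ are $\mathcal O(\eps)$, this factor is bounded away from zero on $(-M,M)\times(0,1)$ and has all derivatives bounded there; multiplying the $H^1$ function $u^\eps$ by such a $W^{1,\infty}$ multiplier and adding the smooth function $\omega(y-h)+c$ stays in $H^1((-M,M)\times(0,1))$. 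Hence the $(x,y)$-representative of $U^\eps$ lies in $H^1((-M,M)\times(0,1))$, and that of $V^\eps$ is just $v^\eps$, which is there by Lemma~\ref{thm:uvareH1}.

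Next I would show that precomposition with the coordinate map $\Phi\colon (X,Y)\mapsto(x,y)$ of \eqref{eqn:coords:x}--\eqref{eqn:coords:y} is a bounded linear operator on $H^1\big((-M,M)\times(0,1)\big)$. Since $x=X$ and $y$ runs over $(0,1)$ precisely when $Y$ does, $\Phi$ maps the rectangle $(-M,M)\times(0,1)$ onto itself; from \eqref{eqn:yderivatives} and the estimates $\eta^\eps=\mathcal O(\eps)$, $\eta^\eps_x=\mathcal O(\eps^{3/2})$ one gets $y_Y=1+\mathcal O(\eps)$ and $y_X=\mathcal O(\eps^{3/2})$, so $D\Phi$, $(D\Phi)^{-1}$, and the Jacobian determinant together with its reciprocal are uniformly bounded. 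For $\mathfrak u\in H^1\cap C^\infty$ the chain rule and the change-of-variables formula give $\|\mathfrak u\circ\Phi\|_{H^1}^2\le C\|\mathfrak u\|_{H^1}^2$ with $C$ depending only on $\sup(1+y_X^2+y_Y^2)$ and $\inf|y_Y|$, and by density of $H^1\cap C^\infty$ in $H^1$ this extends to all of $H^1$. Taking $\mathfrak u$ to be the $(x,y)$-representatives of $U^\eps$ and $V^\eps$, which we just showed lie in $H^1$, yields $U^\eps=\mathfrak u\circ\Phi\in H^1((-M,M)\times(0,1))$ and likewise for $V^\eps$.

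The only step requiring real care is the bound on the composition operator, and this rests entirely on controlling $D\Phi$ and its inverse, which is where the explicit formulas \eqref{eqn:yderivatives} and the smallness of $\eta^\eps$ from Proposition~\ref{thm:uvetaapprox} come in; with those in hand the constant $C$ is harmless (one may take $C\le 5$). The product rule for an $H^1$ function times a bounded smooth multiplier, and the density argument justifying the chain rule, are standard and I would only state them.
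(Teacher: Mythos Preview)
Your proposal is correct and follows essentially the same route as the paper: first observe that the $(x,y)$-representatives of $U^\eps$ and $V^\eps$ lie in $H^1$ (via Lemma~\ref{thm:uvareH1} and the smooth multiplier $Y_y^{-1}$), then show that precomposition with the coordinate change is a bounded operator on $H^1$ by working on $H^1\cap C^\infty$ with the bound $\lVert U\rVert_{H^1}^2\le \dfrac{2\sup(1+y_X^2+y_Y^2)}{\inf|y_Y|}\lVert\mathfrak u\rVert_{H^1}^2$ and extending by density. Your write-up in fact spells out a couple of points (the $W^{1,\infty}$ multiplier argument, the observation that $V^\eps$'s representative is simply $v^\eps$) that the paper leaves implicit.
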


We are now ready to apply elliptic regularity to show $U^\eps$ and $V^\eps$ are analytic.

\begin{prop}
    The velocity components $U^\eps$ and $V^\eps$ are analytic on $\Omega_0 \cup \Omega_1$.
\begin{proof}

Notice that $U^\eps$ and $V^\eps$ are weakly harmonic in $\Omega_0$ and in $\Omega_1$, as for all $\phi \in C^\infty_c(\Omega_i)$, we have
\begin{align*}
    \int_{\Omega_i} \nabla U^\eps  \cdot \nabla \phi \; dX \; dY &= \int_{\Omega_i} U^\eps_X \phi_X + U^\eps_Y \phi_Y \; dX \; dY\\
     &= \int_{\Omega_i} -V^\eps_Y \phi_X + (\omega +V^\eps_X) \phi_Y \; dX \; dY\\
     &= \int_{\Omega_i} V^\eps \phi_{XY} - V^\eps \phi_{XY} \; dX \; dY=0,
\end{align*}  
and similarly for $V^\eps$. By standard elliptic regularity arguments, $U^\eps$ and $V^\eps$ are therefore smooth harmonic functions, and hence real-analytic.
\end{proof}
\end{prop}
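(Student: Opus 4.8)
The plan is to combine the local $H^1$ regularity already established with interior elliptic regularity. The point is that, although \eqref{eqn:stream:divfree}--\eqref{eqn:stream:lap} only control first derivatives of $U^\eps$ and $V^\eps$, cross-differentiating them formally yields $\Delta U^\eps = \partial_Y \omega = 0$ and $\Delta V^\eps = -\partial_X \omega = 0$ inside each $\Omega_i$, since $\omega$ equals the constant $\omega_i$ there. So the first step is to make this precise distributionally. By Lemma~\ref{thm:UVareH1} we have $U^\eps, V^\eps \in H^1((-M,M)\times(0,1))$ for every $M>0$, hence $U^\eps,V^\eps \in H^1_{\mathrm{loc}}(\Omega_0\cup\Omega_1)$. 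Given a test function $\phi \in C^\infty_c(\Omega_i)$, write $\int_{\Omega_i}\nabla U^\eps\cdot\nabla\phi = \int_{\Omega_i}(U^\eps_X\phi_X + U^\eps_Y\phi_Y)$, use \eqref{eqn:stream:divfree} to replace $U^\eps_X$ by $-V^\eps_Y$ and \eqref{eqn:stream:lap} to replace $U^\eps_Y$ by $\omega_i + V^\eps_X$, and integrate by parts once more; this is legitimate because $V^\eps\in H^1_{\mathrm{loc}}$ and $\phi$ has compact support, so there are no boundary terms and $\int_{\Omega_i}\omega_i\phi_Y = 0$. The result is $\int_{\Omega_i}(V^\eps\phi_{XY}-V^\eps\phi_{XY}) = 0$, i.e.\ $U^\eps$ is weakly harmonic in $\Omega_i$. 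Running the same computation with the roles of $U^\eps$ and $V^\eps$ interchanged (now using $V^\eps_X = U^\eps_Y - \omega_i$ and $V^\eps_Y = -U^\eps_X$) shows $V^\eps$ is weakly harmonic in $\Omega_i$ as well.

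Once weak harmonicity is in hand, the conclusion is classical: by Weyl's lemma a distributionally harmonic $L^1_{\mathrm{loc}}$ function agrees almost everywhere with a smooth harmonic function, and smooth harmonic functions are real-analytic (via the mean value property together with Cauchy-type derivative estimates, or because the Laplace operator has analytic coefficients so one may invoke the analyticity theorem for solutions of elliptic equations with analytic data). Applying this separately on the open sets $\Omega_0$ and $\Omega_1$ gives real-analyticity of $U^\eps$ and $V^\eps$ on $\Omega_0 \cup \Omega_1$. (Alternatively one could bootstrap directly on the first-order div--curl system with constant right-hand side, but routing through the Laplacian is cleaner.)

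The only point requiring genuine care is the first step: the integrations by parts must be valid with merely $H^1_{\mathrm{loc}}$ regularity, which is exactly why Lemmas~\ref{thm:uvareH1} and~\ref{thm:UVareH1} were proved beforehand. There is no other obstacle. In particular, because the statement concerns only the open interior $\Omega_0 \cup \Omega_1$ and makes no claim at the free interface $Y = h + \eta^\eps(X)$, no boundary regularity theory is needed and purely interior estimates suffice; the limited ($C^N$) regularity of $\eta^\eps$ plays no role here.
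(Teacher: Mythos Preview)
Your argument is correct and follows essentially the same route as the paper: show weak harmonicity of $U^\eps$ and $V^\eps$ in each $\Omega_i$ via the same integration-by-parts computation using \eqref{eqn:stream:divfree}--\eqref{eqn:stream:lap}, then invoke interior elliptic regularity (Weyl's lemma) to conclude real-analyticity. Your additional remarks on why the $H^1_{\mathrm{loc}}$ regularity from Lemmas~\ref{thm:uvareH1}--\ref{thm:UVareH1} is needed, and why only interior estimates are required, simply make explicit what the paper leaves implicit.
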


We are now ready to prove results on $U^\eps$ and $V^\eps$ of a similar form to those in Proposition~\ref{thm:uvetaapprox}.
\begin{cor}\label{thm:cors about UVeta}
    \hfill 
    \begin{enumerate}[label=\rm(\alph*)]
        \item \label{thm:cors about UVeta:parity}  $U^\eps,\eta^\eps$ are even in $x$, and $V^\eps$ is odd in $x$.
        \item\label{thm:cors about UVeta:homoclinic}  $(U^\eps,V^\eps,\eta^\eps)$ is homoclinic to $(\omega(Y-h)+h(1-h)+\eps,0,0)$ with respect to the $C^0$ norm. 
        \item\label{thm:cors about UVeta:small} $U^\eps_X $ and $ \eta^\eps_X$ are $\mathcal{O}(\eps^{\frac 32})$, and $V^\eps_X$ is $\mathcal{O}(\eps^{2})$ with respect to the $C^0$ norm.
    \end{enumerate}    
    \begin{proof}
        Conclusion~\ref{thm:cors about UVeta:parity} is immediate. Conclusion~\ref{thm:cors about UVeta:homoclinic} follows by arguing similarly to in Lemma~\ref{thm:UVareH1}. We see that the coordinate change \eqref{eqn:coords} gives a bounded linear map from $H^1((0,1))$ to itself. Therefore \eqref{eqn:uvetahomoclinic} implies     
        \begin{align*}
        \lim_{X\to \pm \infty} \lVert U^\eps(X, \, \cdot \, )-\omega( \, \cdot \, -h)-h(1-h)-\eps\rVert_{H^1} =0,
        \qquad
        \lim_{X\to \pm \infty} \lVert V^\eps(X, \, \cdot \, )\rVert_{H^1} =0,
        \end{align*}
        as required, and we already know $\eta^\eps(X)$ is homoclinic to 0.

        For conclusion~\ref{thm:cors about UVeta:small}, the estimate on $\eta^\eps_X$ comes immediately from \eqref{eqn:SizeOfuxvxetax:x}. As for the other two, again, we argue similarly to in Lemma~\ref{thm:UVareH1}, and consider \eqref{eqn:UVderivatives} to see 
        \begin{align*}
        U^\eps_X&=Y_{XY}\mathcal{O}(1) + y_Y u^\eps_x 
        =(\eta^\eps_X+u^\eps_x)\mathcal{O}(1)
        =\mathcal{O}(\eps^{\frac 32})\\
        V^\eps_X&=v_x + Y_X v_y=\mathcal{O}(\eps^{2}),
        \end{align*}
        as required.
    \end{proof}
\end{cor}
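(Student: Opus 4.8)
The plan is to deduce all three conclusions from Proposition~\ref{thm:uvetaapprox} together with the explicit form of the change of variables \eqref{eqn:coords}, so that essentially no new analysis is needed beyond bookkeeping with the estimates \eqref{eqn:SizeOfuxvxetax} and the limit \eqref{eqn:uvetahomoclinic}. For conclusion~\ref{thm:cors about UVeta:parity}, I would observe that the reflection $X\mapsto -X$ is compatible with \eqref{eqn:coords}: since $x=X$ and the formula \eqref{eqn:coords:y} for $y$ depends on $X$ only through $\eta(X)$, the evenness of $\eta^\eps$ (Proposition~\ref{thm:uvetaapprox}) makes $y$, and hence $Y_y$, even in $X$. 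Solving \eqref{eqn:coords:u}--\eqref{eqn:coords:v} for $U$ and $V$ expresses $U^\eps(X,Y)=Y_y^{-1}\big(u^\eps+\omega(y-h)+c\big)$ and $V^\eps(X,Y)=v^\eps$ in terms of $u^\eps$, $v^\eps$, $y$ and $Y_y$; feeding in the parities from Proposition~\ref{thm:uvetaapprox} ($u^\eps,\eta^\eps$ even, $v^\eps$ odd in $x$) then gives that $U^\eps,\eta^\eps$ are even and $V^\eps$ is odd in $X$. This is, of course, just the reversibility \eqref{eqn:UVreversibility}--\eqref{eqn:reversibility} applied to a solution invariant under the symmetry.

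For conclusion~\ref{thm:cors about UVeta:homoclinic}, I would start from the $H^1$-homoclinicity \eqref{eqn:uvetahomoclinic} and reuse the estimate from the proof of Lemma~\ref{thm:UVareH1}: composition with \eqref{eqn:coords} is a bounded linear map on $H^1((0,1))$ with operator norm bounded \emph{uniformly in} $X$, the bound depending only on the uniform-in-$x$ smallness of $\eta^\eps$ and $\eta^\eps_x$ from \eqref{eqn:SizeOfuxvxetax:x}. Hence \eqref{eqn:uvetahomoclinic} transfers to
\[
\lim_{X\to\pm\infty}\lVert U^\eps(X,\cdot)-\omega(\cdot-h)-h(1-h)-\eps\rVert_{H^1((0,1))}=0, \qquad \lim_{X\to\pm\infty}\lVert V^\eps(X,\cdot)\rVert_{H^1((0,1))}=0,
\]
where one also uses that $\eta^\eps(X)\to 0$ forces $y\to Y$ and $Y_y\to 1$ in these limits. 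The one-dimensional Sobolev embedding $H^1((0,1))\hookrightarrow C^0([0,1])$ then upgrades this to convergence in $C^0$, and $\eta^\eps(X)\to 0$ is already known.

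For conclusion~\ref{thm:cors about UVeta:small}, I would differentiate the composition directly via \eqref{eqn:UVderivatives}, so that
\[
U^\eps_X=y_{XY}\big(u^\eps+\omega(y-h)+c\big)+y_Y u^\eps_x+y_X y_Y\big(u^\eps_y+\omega\big), \qquad V^\eps_X=v^\eps_x+y_X v^\eps_y.
\]
From \eqref{eqn:yderivatives} both $y_X$ and $y_{XY}$ carry a factor $\eta^\eps_x=\mathcal{O}(\eps^{3/2})$, while $y_Y=\mathcal{O}(1)$ and $u^\eps+\omega(y-h)+c=\mathcal{O}(1)$ by \eqref{eqn:leadingOrderSolnuveta} and the smallness of $\eta^\eps$. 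Combining these with $\sup_{x,y}|u^\eps_x|=\mathcal{O}(\eps^{3/2})$ and $\sup_{x,y}|v^\eps_x|=\mathcal{O}(\eps^2)$ (from \eqref{eqn:SizeOfuxvxetax:x} and the Sobolev embedding) and $\sup_{x,y}|u^\eps_y|,\sup_{x,y}|v^\eps_y|=\mathcal{O}(\eps)$ (from \eqref{eqn:SizeOfuyvy}), the products multiply out to $U^\eps_X=\mathcal{O}(\eps^{3/2})$ and $V^\eps_X=\mathcal{O}(\eps^{5/2})+\mathcal{O}(\eps^{2})=\mathcal{O}(\eps^2)$; the bound $\eta^\eps_X=\mathcal{O}(\eps^{3/2})$ is immediate from \eqref{eqn:SizeOfuxvxetax:x}.

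The main obstacle is conclusion~\ref{thm:cors about UVeta:homoclinic}: one must check that the bound on the norm of the coordinate-change operator is genuinely uniform in $X$ and that this operator converges to the identity as $X\to\pm\infty$, both of which hinge on the uniform-in-$x$ decay of $\eta^\eps$ and $\eta^\eps_x$ supplied by Proposition~\ref{thm:uvetaapprox}. Conclusions~\ref{thm:cors about UVeta:parity} and~\ref{thm:cors about UVeta:small} are then routine bookkeeping with those same estimates.
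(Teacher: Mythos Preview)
Your proposal is correct and follows the same approach as the paper: parity comes from the reversibility of the coordinate change combined with the parities of $u^\eps,v^\eps,\eta^\eps$; homoclinicity is transferred from \eqref{eqn:uvetahomoclinic} via the uniform-in-$X$ boundedness of the coordinate change on $H^1((0,1))$ (as in Lemma~\ref{thm:UVareH1}) and Sobolev embedding; and the derivative bounds come from \eqref{eqn:UVderivatives} together with the sizes in \eqref{eqn:SizeOfuxvxetax}. Your treatment is in fact slightly more careful than the paper's, in that you retain all three terms of $U^\eps_X$ in \eqref{eqn:UVderivatives} (the paper's displayed computation suppresses the $y_X y_Y(u^\eps_y+\omega)$ term, which is also $\mathcal O(\eps^{3/2})$) and you make explicit that the coordinate-change operator tends to the identity as $X\to\pm\infty$, which is needed to identify the limiting shear flow.
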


We now have everything we need to show that $\overline{U}^\eps,\overline{V}^\eps,\overline{\eta}^\eps$ satisfy the estimates in Theorem~\ref{thm:main}.
\begin{thm}\label{thm:UVetaapprox}
    The estimates on $\overline{U}^\eps,\overline{V}^\eps,\overline{\eta}^\eps$ in Theorem~\ref{thm:main} are true.
\begin{proof}
    Recall \eqref{eqn:coords}, and the definition of $\overline{\eta}^\eps$ from Theorem~\ref{thm:main}. It will be useful here to define the quantities
    \begin{align*}
        H&= \begin{cases}
            -h^{-1} & \quad \text{for} \quad 0 \leq y \leq h\\
            (1-h)^{-1} & \quad \text{for} \quad h \leq y \leq 1
            \end{cases}\\
        h^+&=\sup_{X \in \R} y(X,h+\overline{\eta}^\eps(X))\\
        h^-&=\inf_{X \in \R} y(X,h+\overline{\eta}^\eps(X)).
    \end{align*}
    Notice that $h^+=h+\mathcal{O}(\eps^2)$, and similarly for $h^-$.
    We have just shown the required result for $\eta$ in Conclusion~\ref{thm:uvetaapprox:leadingTerms} of Proposition~\ref{thm:uvetaapprox}. We deal with $U$ first.
    In what follows, $X$ and $Y$ should be understood to mean $X(x,y)$ and $Y(x,y)$ respectively. Using \eqref{eqn:coords:u} and \eqref{eqn:leadingOrderSolnuveta}, we see that
    \begin{align}
    U^\eps(X,Y) &= y_Y( \omega(y-h) + h(1-h)) +\eps +\eta^\eps(x)u_*(y) + \mathcal{O}(\eps^2) \nonumber
    \\
    \label{eqn:an equation with H inside}
    &=  \omega(y-h) + h(1-h) +\eps  + \frac{\eta^\eps(x)}{h(1-h)} \left(\omega y(1-y)+h^2(1-h)^2H \right) + \mathcal{O}(\eps^2) .
    \end{align}  
    Notice that despite the discontinuities in $\omega$ and $H$, we have that $\omega y(1-y)+h^2(1-h)^2H$ is continuous at $y=h$, and it is in fact Lipschitz continuous. 
    Therefore, for $y$ between $0$ and $h^+$, replacing $\omega$ with $\omega_0$ and $H$ with $-1/h$ in \eqref{eqn:an equation with H inside} introduces errors of $\mathcal{O}(\eps^2)$. More precisely, for $(x,y) \in \R \times [0,h^+]$, we have that
    \begin{align*}
    U^\eps(X,Y)&=  \omega_0(y-h) + h(1-h) +\eps  + \frac{\eta^\eps(x)}{h(1-h)} \left(\omega_0 Y(1-Y)-h(1-h)^2 \right) + \mathcal{O}(\eps^2),
    \end{align*}
    where as usual, the error term is uniform in $(x,y)$. 
    Notice that if $(X,Y)$ satisfies $Y \leq h+\overline{\eta}^\eps(X)$,  then it also satisfies $Y \leq Y(x,h^+)$. This means that when we apply the coordinate transformation \eqref{eqn:coords:y}, we see that for all $X,Y$ satisfying $Y \leq  h+\overline{\eta}^\eps(X)$, we have 
    \begin{align*}
    U^\eps(X,Y)&=  \omega_0(Y-h) + h(1-h) +\eps  -(1-h)\eta^\eps(x)+ \mathcal{O}(\eps^2) \\
    &=  \overline{U}^\eps(X,Y)+ \mathcal{O}(\eps^2).
    \end{align*}
    We argue similarly to conclude that for all $X,Y$ satisfying $Y \geq  h+\overline{\eta}^\eps(X)$, we have 
    \begin{align*}
    U^\eps(X,Y)&=\omega_1(Y-h) + h(1-h) +\eps  +h\overline{\eta}^\eps(x)+ \mathcal{O}(\eps^2) 
    =  \overline{U}^\eps(X,Y) + \mathcal{O}(\eps^2).
    \end{align*} 
    Showing the result for $V$ follows a similar but more straightforward argument. We see for $(X,Y)$ with $Y\leq h+\overline{\eta}^\eps(X)$, we have
    \begin{align*}
        V^\eps(X,Y)&= \frac{3 \sqrt 3 \eps^{\frac{3}{2}}}{\theta} \tanh \Big(\frac{\sqrt{3\eps}}{2h(1-h)} x\Big) \sech^2 \Big(\frac{\sqrt{3\eps}}{2h(1-h)} x\Big)(1-H(y-h)) + \mathcal{O}( \eps^{\frac 52})\\
        &= -\frac{3 \sqrt 3 \eps^{\frac{3}{2}}}{\theta h} \tanh \Big(\frac{\sqrt{3\eps}}{2h(1-h)} x\Big) \sech^2 \Big(\frac{\sqrt{3\eps}}{2h(1-h)} x\Big)y + \mathcal{O}( \eps^{\frac 52})\\
        &= -\frac{3 \sqrt 3 \eps^{\frac{3}{2}}}{\theta h} \tanh \Big(\frac{\sqrt{3\eps}}{2h(1-h)} X\Big) \sech^2 \Big(\frac{\sqrt{3\eps}}{2h(1-h)} X\Big)Y + \mathcal{O}( \eps^{\frac 52})\\
        &= \overline{V}^\eps(X,Y)+ \mathcal{O}( \eps^{\frac 52}),
    \end{align*}
    and similarly, for $(X,Y)$ with $Y\geq h+\overline{\eta}^\eps(X)$, we have
    \begin{align*}
        V^\eps(X,Y)&= \frac{3 \sqrt 3 \eps^{\frac{3}{2}}}{\theta (1-h)} \tanh \Big(\frac{\sqrt{3\eps}}{2h(1-h)} X\Big) \sech^2 \Big(\frac{\sqrt{3\eps}}{2h(1-h)} X\Big)(1-Y) + \mathcal{O}( \eps^{\frac 52})\\
        &= \overline{V}^\eps(X,Y)+ \mathcal{O}( \eps^{\frac 52}).
        \qedhere
    \end{align*}
\end{proof}
\end{thm}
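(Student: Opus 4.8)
The plan is to leverage Proposition~\ref{thm:uvetaapprox}, which already pins down the leading-order behaviour of $(u^\eps,v^\eps,\eta^\eps)$ in the flattened coordinates, and to transport those estimates through the coordinate change \eqref{eqn:coords} back to the physical variables. The estimate $\lVert \eta^\eps-\overline\eta^\eps\rVert_{L^\infty}=\mathcal O(\eps^2)$ requires nothing new: $\overline\eta^\eps$ is precisely the leading term in \eqref{eqn:leadingOrderSolnuveta}, and the remainder there, being $\mathcal O(\eps^2)$ in $C^0(\R,H^1)$, restricts to the sup norm on the scalar component $\eta^\eps$. For $U^\eps$ and $V^\eps$ one should first upgrade the $C^0(\R,H^1((0,1)))$ bounds of Proposition~\ref{thm:uvetaapprox} to $L^\infty(\R\times(0,1))$ bounds via the one-dimensional Sobolev embedding $H^1((0,1))\hookrightarrow C^0([0,1])$, whose constant is fixed, so that the remainders in \eqref{eqn:leadingOrderSolnuveta} are uniformly $\mathcal O(\eps^2)$ and $\mathcal O(\eps^{5/2})$ pointwise.

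For $U^\eps$ I would invert \eqref{eqn:coords:u} to get $U^\eps(X,Y)=y_Y\big(u^\eps(x,y)+\omega(y-h)+c\big)$ with $c=h(1-h)+\eps$, substitute $u^\eps(x,y)=\eta^\eps(x)u_*(y)+\mathcal O(\eps^2)$ (rewriting \eqref{eqn:leadingOrderSolnuveta} using the explicit formula for $\eta^\eps$), and Taylor-expand the prefactor $y_Y$ and the map $Y\mapsto y$ about $\eta=0$, using $\eta^\eps=\mathcal O(\eps)$ together with \eqref{eqn:SizeOfuxvxetax}. Since $\eps$ and $\eta^\eps u_*$ are both $\mathcal O(\eps)$ while $y_Y=1+\mathcal O(\eps)$, collecting terms produces $U^\eps=U^*(Y)+\eps+\tfrac{\eta^\eps(x)}{h(1-h)}\big(\omega\,y(1-y)+h^2(1-h)^2 H\big)+\mathcal O(\eps^2)$, where $H=-h^{-1}$ in the lower layer and $H=(1-h)^{-1}$ in the upper, and I would compare this with the piecewise formula for $\overline U^\eps$. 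For $V^\eps$ the computation is shorter, since \eqref{eqn:coords:v} gives $V^\eps(X,Y)=v^\eps(x,y)$ outright; substituting the leading term from \eqref{eqn:leadingOrderSolnuveta}, whose prefactor is $\mathcal O(\eps^{3/2})$, and then replacing $y$ by $Y$ (which costs $\mathcal O(\eps^{3/2})\cdot\mathcal O(\eps)=\mathcal O(\eps^{5/2})$) yields $V^\eps=\overline V^\eps+\mathcal O(\eps^{5/2})$.

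The hard part is the interface. The approximant $\overline U^\eps$ places the interface at $Y=h+\overline\eta^\eps(X)$, whereas the true solution and the coordinate $y$ use $Y=h+\eta^\eps(X)$; these locations differ by $\mathcal O(\eps^2)$, but $\omega$ and $H$ each jump across them, so one cannot simply substitute their one-sided values. The key observation I would exploit is that, although $\omega$ and $H$ are individually discontinuous, the combination $\omega\,y(1-y)+h^2(1-h)^2 H$ is \emph{continuous} at $y=h$ — a one-line check using $\omega_0-\omega_1=1$ — and, being piecewise polynomial, it is Lipschitz on $[0,1]$. Hence replacing $\omega$ by $\omega_0$ and $H$ by $-h^{-1}$ in that combination costs only $\mathcal O(|y-h|)$, uniformly. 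For any $(X,Y)$ with $Y\le h+\overline\eta^\eps(X)$ one has $y\le h^+:=\sup_X y(X,h+\overline\eta^\eps(X))$, and since $y(X,h+\eta^\eps(X))=h$ exactly, $h^+=h+\mathcal O(\eps^2)$; thus the substitution error, already multiplied by the prefactor $\eta^\eps=\mathcal O(\eps)$, is $\mathcal O(\eps^2)$. Finally $y=Y+\mathcal O(\eps)$ uniformly, so replacing $y$ by $Y$ inside $\omega_0\,Y(1-Y)$ costs $\mathcal O(\eps)$, which against the prefactor $\eta^\eps$ is again $\mathcal O(\eps^2)$. Assembling these gives $U^\eps=\overline U^\eps+\mathcal O(\eps^2)$ on $\{Y\le h+\overline\eta^\eps\}$, with the upper-layer region handled symmetrically, and the corresponding bookkeeping for $V^\eps$ is easier since no jump-continuity trick is needed, only the $y\mapsto Y$ replacement. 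The one remaining point to be careful about is that every $\mathcal O$ is uniform in $(x,y)$: this holds because each function being expanded is smooth in the arguments $(\eta^\eps,\eta^\eps_x,u^\eps,u^\eps_y,v^\eps,v^\eps_y,\dots)$ and all of these are $\mathcal O(\eps)$ uniformly by Proposition~\ref{thm:uvetaapprox}.
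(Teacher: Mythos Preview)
Your proposal is correct and follows essentially the same route as the paper: invert \eqref{eqn:coords:u}--\eqref{eqn:coords:v}, substitute the leading-order expressions from Proposition~\ref{thm:uvetaapprox}, and handle the interface mismatch by exploiting the Lipschitz continuity of the combination $\omega\,y(1-y)+h^2(1-h)^2 H$ together with the bound $h^+=h+\mathcal O(\eps^2)$. The only addition in your outline is the explicit mention of the Sobolev embedding $H^1((0,1))\hookrightarrow C^0([0,1])$ to pass from the $C^0(\R,H^1)$ remainders to pointwise $L^\infty$ bounds, which the paper leaves implicit.
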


In a sense, \eqref{eqn:leadingOrderSolnuveta} is a more precise approximation of the solution than $\overline{U}^\eps, \overline{V}^\eps, \overline{\eta}^\eps$. This is because in \eqref{eqn:leadingOrderSolnuveta}, the discontinuity in the derivatives happens at $y=h$, which is where it occurs in the exact solution, but the discontinuity in the derivatives of $\overline{U}^\eps$, $\overline{V}^\eps$, happens at $Y=h+\ \overline{\eta}^\eps$. 

\section{Streamline patterns}\label{sec:SignOfEta}

In this section we will prove Theorem~\ref{thm:main2}. Both the qualitative nature of the results we show, and the general method used are similar in spirit to those in, for example \cite{Wahlen:critLayer}.
From now onward we assume without loss of generality that $\omega_0 \leq 1-h$.
We lose no generality from this, as if it is not the case, then performing the vertical reflection
\begin{equation}\label{eqn:reflecting in Y}
    y \mapsto 1-y, \quad h \mapsto 1-h, \quad V \mapsto -V
\end{equation}
and relabelling $\Omega_0$, $\Omega_1$, puts us in the regime where once again $\omega_0 \leq 1-h$ holds. After performing this reflection, we still have the vorticity of lower layer is one greater than the vorticity of the upper layer.

\subsection{Signs of components of the flow}\label{sec:signsofcomponents}

One thing we can deduce quite straightforwardly is a sign for $U^\eps$ on the interface, as
\begin{align}\label{eqn:signUatinterface}
    U^\eps(X,h+\eta^\eps(X))&=h(1-h)+\mathcal O (\eps)
    >0.
\end{align}
We will next show that the interface is monotone, that is that $\eta^\eps_x(x)$ has one sign for $x>0$ and the opposite sign for $x<0$.
\begin{prop}\label{thm:SignOfetax}
    For all $x$, $\eta^\eps_x(x)$ has the same sign as $\theta x$.
\end{prop}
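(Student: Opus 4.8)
The plan is to recall the rescaled reduced equation. From \eqref{eqn:RescaledCoords} we have $\eta^\eps(x) = a^\eps(x) + e_3 \cdot \psi(a^\eps(x),b^\eps(x),\eps)(y)$ evaluated appropriately; in fact $\eta^\eps$ is exactly the third component, and from \eqref{eqn:formOfSolnOnCMTbig} we see $\eta^\eps(x) = a^\eps(x) + e_3\cdot\psi(a^\eps(x),b^\eps(x),\eps)$, which by \eqref{eqn:orderOfpsi} satisfies $\eta^\eps = a^\eps(1+\mathcal{O}(\eps))$ up to higher-order corrections (using also that $e_3\cdot\psi$ is even in $b$). More to the point, differentiating, $\eta^\eps_x = a^\eps_x + (\text{derivatives of }\psi)\cdot(a^\eps_x,b^\eps_x)$, and by \eqref{eqn:axandbx:ax}, $a^\eps_x = e_1\cdot F = b^\eps(1+\mathcal{O}(|a^\eps|+|b^\eps|+|\eps|))$. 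So the sign of $\eta^\eps_x$ should agree with the sign of $b^\eps$ for $\eps$ small, provided the correction factor stays positive — which it does since $a^\eps,b^\eps,\eps$ are all $\mathcal{O}(\eps)$ (in fact $b^\eps = \mathcal{O}(\eps^{3/2})$) uniformly in $x$ by Proposition~\ref{thm:uvetaapprox}.

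Next I would pin down the sign of $b^\eps$. After the rescaling \eqref{eqn:RescaledCoords} we have $b(x) = \frac{2\sqrt3\,\eps^{3/2}}{h(1-h)\theta}\tilde b(\tilde x)$ with $\tilde x = \frac{\sqrt{3\eps}}{h(1-h)}x$, so $\operatorname{sign} b^\eps(x) = \operatorname{sign}(\theta)\cdot\operatorname{sign}(\tilde b^\eps(\tilde x))\cdot\operatorname{sign}(x)$ — wait, more carefully, $\tilde x$ has the same sign as $x$ since the prefactor is positive, so $\operatorname{sign} b^\eps(x) = \operatorname{sign}(\tfrac1\theta)\operatorname{sign}(\tilde b^\eps(\tilde x))$. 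By Proposition~\ref{thm:aepsbepsExist}(d) and \ref{thm:atildebtildeLipschitz}, $\tilde b^\eps = \tilde b^0 + \mathcal{O}(\eps)$ in $C^1(\R)$ where $\tilde b^0(\tilde x) = \tfrac32\tanh(\tfrac12\tilde x)\sech^2(\tfrac12\tilde x)$, which is strictly positive for $\tilde x>0$, strictly negative for $\tilde x<0$, and vanishes only at $\tilde x = 0$. The $C^1$-smallness together with $\tilde b^{0\prime}(0) = \tfrac38 \ne 0$ handles a neighbourhood of $\tilde x=0$, while away from zero $|\tilde b^0|$ is bounded below on each compact piece and $\tilde b^0\to0$ at $\pm\infty$ forces one to argue on $[-T,T]$ and then use that $\tilde b^\eps$ has no zeros besides $0$ by Remark~\ref{thm:uniqe0oftildeb}. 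Concretely: Remark~\ref{thm:uniqe0oftildeb} already tells us $\tilde b^\eps(\tilde x) = 0$ only at $\tilde x=0$, so $\tilde b^\eps$ has a constant sign on $\tilde x>0$ and on $\tilde x<0$; by $C^0$-closeness to $\tilde b^0$ at any single point $\tilde x_0>0$, that sign is $+$ on $\tilde x>0$, and by oddness it is $-$ on $\tilde x<0$. Hence $\operatorname{sign}\tilde b^\eps(\tilde x) = \operatorname{sign}(\tilde x)$, so $\operatorname{sign} b^\eps(x) = \operatorname{sign}(x/\theta)$, i.e.\ $\operatorname{sign}(\theta x)$.

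Combining: $\eta^\eps_x(x) = b^\eps(x)\bigl(1+\mathcal{O}(\eps)\bigr) + (\text{terms that are } \mathcal{O}(\eps)\cdot b^\eps_x)$ — I need to be slightly careful that the $\psi$-correction to $\eta^\eps_x$ does not swamp $b^\eps$. Since $e_3\cdot\psi$ is even in $b$, $\partial_b(e_3\cdot\psi) = \mathcal{O}(|b|(|a|+|b|+|\eps|))$, and $\partial_a(e_3\cdot\psi)\cdot a^\eps_x$ contributes $\mathcal{O}((|a|+|\eps|))\cdot b^\eps(1+\cdots)$, all of which are $b^\eps$ times an $\mathcal{O}(\eps)$ factor; the term $\partial_b(e_3\cdot\psi)\cdot b^\eps_x$ is $\mathcal{O}(|b^\eps| \eps)\cdot b^\eps_x$, and since $b^\eps_x = a^\eps + \mathcal{O}(\cdots) = \mathcal{O}(\eps)$ this is genuinely lower order than $b^\eps = \mathcal{O}(\eps^{3/2})$... actually $\mathcal{O}(\eps^{5/2})$ vs $\mathcal{O}(\eps^{3/2})$, fine. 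So $\eta^\eps_x(x) = b^\eps(x)(1+\mathcal{O}(\eps))$ with the $\mathcal{O}(\eps)$ uniform in $x$, hence for $\eps$ small enough $\operatorname{sign}\eta^\eps_x(x) = \operatorname{sign} b^\eps(x) = \operatorname{sign}(\theta x)$, as claimed.

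The main obstacle is the bookkeeping in the last paragraph: making sure that all the $\psi$-induced corrections to $\eta^\eps_x$ are genuinely of smaller order than $b^\eps$ itself (which is only $\mathcal{O}(\eps^{3/2})$, not $\mathcal{O}(\eps)$), uniformly in $x$ on the whole line. This rests on the parity of $e_3\cdot\psi$ in $b$ (so the leading $\psi$-contribution to $\eta$ is $b$-independent and contributes to $\eta^\eps_x$ only through $a^\eps_x$, which is itself proportional to $b^\eps$) and on the uniform $C^1$ bounds from Proposition~\ref{thm:aepsbepsExist}. The sign analysis of $\tilde b^0$ and the no-extra-zeros statement of Remark~\ref{thm:uniqe0oftildeb} then finish it cleanly.
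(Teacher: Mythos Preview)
Your proposal is correct and follows essentially the same route as the paper: establish $\operatorname{sign} b^\eps(x)=\operatorname{sign}(\theta x)$ via the rescaling and the unique-zero observation for $\tilde b^\eps$, then use the reduced equation $a^\eps_x = b^\eps(1+\mathcal O(\eps))$ together with the parity of $e_3\cdot\psi$ in $b$ to conclude $\eta^\eps_x = b^\eps(1+\mathcal O(\eps))$.

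One small imprecision worth tightening: from evenness of $e_3\cdot\psi$ in $b$ you only get $\partial_b(e_3\cdot\psi)=b\cdot\mathcal O(1)$, not $\mathcal O(|b|(|a|+|b|+|\eps|))$ as you wrote (the coefficient of $b^2$ in $e_3\cdot\psi$ need not vanish). This does not harm the argument, because the factor $b^\eps_x=\mathcal O(\eps^2)$ supplies the smallness you need for that term; the paper handles it in the same way. Just make sure your final write-up tracks the orders correctly at that step.
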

\begin{proof}
At $x=0$ we have that $a^\eps_x(0)=\eta^\eps_x(0)=0$ by evenness of $a^\eps$ and $\eta^\eps$.  
We now show the $x \neq 0 $ case. A corollary of Proposition~\ref{thm:aepsbepsExist} is that for all $x$, $\tilde b^\eps(x)$ shares a sign with $x$. Therefore, for $x \neq 0$,
\begin{equation}\label{eqn:signOnb}
    \frac{b^\eps(x)}{\theta x}>0.
\end{equation}
In particular, by Remark~\ref{thm:uniqe0oftildeb}, $b^\eps(x)=0$ if and only if $x=0$.
We also know using \eqref{eqn:axandbx:ax}, that for $x \neq 0$, 
\begin{equation}\label{eqn:axandbagree}
    \frac{a^\eps_x(x)}{b^\eps(x)}= 1+\mathcal O (|a^\eps(x)|+|b^\eps(x)|+|\eps|)>0.
\end{equation}
Therefore, when $x \neq 0$, we know that $a^\eps_x(x) \neq 0$.

Consider \eqref{eqn:formOfSolnOnCMTbigDiffed}. We are interested in $\eta^\eps_x$ so we take the third component of each vector, then divide the resulting equation by $a^\eps_x$. 
Since $\psi_a$ and $\psi_b$ are 3-component vectors, we continue our notational convention, and denote their third components by $e_3 \cdot \psi_a$ and $e_3 \cdot \psi_b$ respectively. 
We also know from \eqref{eqn:reversibilityOfPsi} that $e_3 \cdot \psi$ is even in $b$ and $C^N$, hence $e_3 \cdot \psi_b$ is odd in $b$ and $C^{N-1}$, therefore 
\[e_3 \cdot \psi_b(a^\eps, b^\eps, \eps )= b^\eps(1+\mathcal O (|a^\eps|+|b^\eps|+|\eps|))= b^\eps(1+\mathcal O (\eps)).\]
Putting all this together, we get
\begin{align}\label{eqn:compareEtaxAndax}
    \frac{\eta^\eps_x(x)}{a^\eps_x(x)}&= 1 +  e_3 \cdot \psi_a(a^\eps,b^\eps,\eps) + \frac{b^\eps_x(x)b^\eps(x)}{a^\eps_x(x)} (1+\mathcal O (\eps)).
\end{align}
We know from differentiability of $\psi_a$ that $e_3 \cdot \psi_a=\mathcal O (|a^\eps|+|b^\eps|+|\eps|)=\mathcal O (\eps)$, hence, multiplying \eqref{eqn:compareEtaxAndax} by $a^\eps_x / b^\eps$, we see
\begin{align*}
    \frac{\eta^\eps_x(x)}{b^\eps(x)}&=(1+\mathcal O (\eps))\frac{a^\eps_x(x) }{ b^\eps(x)} + b^\eps_x(x) (1+\mathcal O (\eps))\\
    &=1+\mathcal O (\eps) + b^\eps_x(x) \big(1+\mathcal O (|a^\eps(x)|+|b^\eps(x)|+|\eps|\big))
    =1+\mathcal O (\eps)
    >0,
\end{align*}
where we have used the fact that $ b^\eps_x(x) = \mathcal{O}(\eps^2)$, which can be deduced from the form of $F$ in \eqref{eqn:directODE}, the rescaling \eqref{eqn:RescaledCoords} and the boundedness of $\tilde a^\eps$ and $\tilde b^\eps$. Therefore, we see that $\eta^\eps_x$, $b^\eps$, and $\theta x$ share a sign.
\end{proof}
\begin{cor}\label{thm:SignOfeta}
    For all $x$, we have $\theta \eta^\eps(x)<0$.
    \begin{proof}
        We know by conclusion~\ref{thm:cors about UVeta:homoclinic} of Corollary~ \ref{thm:cors about UVeta} that $\eta^\eps$ tends to $0$ as $x$ tends to $\pm \infty$. This and Proposition~\ref{thm:SignOfetax} give us the result.
    \end{proof}
\end{cor}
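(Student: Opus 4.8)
The plan is to combine the monotonicity statement of Proposition~\ref{thm:SignOfetax} with the fact that $\eta^\eps$ is homoclinic to $0$. First I would recall that conclusion~\ref{thm:cors about UVeta:homoclinic} of Corollary~\ref{thm:cors about UVeta} gives $\lim_{x\to\pm\infty}\eta^\eps(x)=0$, and that $\eta^\eps\in C^N(\R)$ is in particular continuous.

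Suppose for concreteness that $\theta>0$; the case $\theta<0$ is identical with all inequalities reversed, or follows from it by the reflection $x\mapsto -x$ together with reversibility. By Proposition~\ref{thm:SignOfetax}, $\eta^\eps_x(x)>0$ for $x>0$, so $\eta^\eps$ is strictly increasing on $(0,\infty)$. A strictly increasing function on $(0,\infty)$ with limit $0$ at $+\infty$ is necessarily negative there, so $\eta^\eps(x)<0$ for all $x>0$. Symmetrically, $\eta^\eps_x(x)<0$ for $x<0$ makes $\eta^\eps$ strictly decreasing on $(-\infty,0)$, and comparison with the limit $0$ at $-\infty$ gives $\eta^\eps(x)<0$ for all $x<0$. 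Finally, strict monotonicity together with continuity yields $\eta^\eps(0)<\eta^\eps(1)<0$. Hence $\eta^\eps(x)<0$ for every $x$, that is, $\theta\eta^\eps(x)<0$.

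Since Proposition~\ref{thm:SignOfetax} already carries out all the analytical work, there is no genuine obstacle here; the only mild subtlety is extracting the \emph{strict} sign at the origin, which comes from strict monotonicity on a half-line rather than from the value $\eta^\eps_x(0)$, which vanishes by evenness.
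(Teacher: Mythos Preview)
Your proof is correct and follows the same approach as the paper: combine the limits $\eta^\eps(x)\to 0$ as $x\to\pm\infty$ with the monotonicity from Proposition~\ref{thm:SignOfetax}. You simply spell out the details the paper leaves implicit, including the treatment of $x=0$; one small remark is that the alternative ``follows by the reflection $x\mapsto -x$ together with reversibility'' does not actually reduce the $\theta<0$ case to the $\theta>0$ case (that reflection leaves $\theta$ unchanged), but your primary argument ``all inequalities reversed'' is fine.
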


We now show that $V^\eps$ also shares a sign with $\theta X$, except on the upper and lower boundaries, where we know $V^\eps=0$. We use a maximum principle argument, one possible reference for which would be \cite[Section 6.4, Theorem 3]{Evans:pde}.

\begin{prop}\label{thm:signOnV}
    For all $(X,Y) \in \R \times (0,1)$, we have that $V^\eps(X,Y)$ and $\theta X$ share a sign.
\begin{proof}
Notice that by oddness, we have immediately that $V^\eps(0,Y)=0$.
We now consider the case where $X>0$ and $\theta>0$, but the other cases follow an almost identical argument.
Let $\Omega_i^{M}= \{ (X,Y) \in \Omega_i \mid 0<X<M\}$. Since $V^\eps$ is harmonic on $\Omega_i^{M}$, we can apply the strong maximum principle, which says $V^\eps$ attains a minimum on $\overline{\Omega_i^{M}}$, and does so only on $\partial \Omega_i^{M}$.
We know that $V^\eps(X,0)=0$ from \eqref{eqn:stream:kintop}, and that $V^\eps(0,Y)=0$ by oddness. We also know from \eqref{eqn:stream:kinint} that
\begin{align*}
    V^\eps(X,h+\eta^\eps(X))=\eta^\eps_X(X)U^\eps(X,h+\eta^\eps(X)),
\end{align*}
therefore,
\[  V^\eps(X,h+\eta^\eps(X))) >0. \]
We know from Corollary~\ref{thm:cors about UVeta} that $\lVert V^\eps(M,  \, \cdot \, ) \rVert_{H^1((0,1))}$ tends to 0 as $M$ tends to infinity. This means we can use an argument by contradiction to show that $V^\eps>0$ on $\Omega_i^\infty$.

Suppose there exists $(\hat{X},\hat{Y})$ with $\hat{X}>0$ such that $V^\eps(\hat{X},\hat{Y})<0$. Since 
\[\lVert V^\eps(X,  \, \cdot \, ) \rVert_{H^1((0,1))} \to 0,\]
there exists $M>\hat{X}$  such that for all $Y \in [0,1]$, we have \[|V^\eps(M,Y)|<\tfrac 12 |V^\eps(\hat{X},\hat{Y})|.\] 
Hence, the minimum $V^\eps$ attains on $\partial \Omega_i^{M}$ is no less than $\frac 12 V^\eps(\hat{X},\hat{Y})$, and so $V^\eps$ does not attain its minimum on the boundary, contradicting the maximum principle. 
Thus, for all $M>0$, we have that $V^\eps \geq 0$ on $\overline{\Omega_i^{M}}$. 
However, we know that $V^\eps$ does not attain its minimum in $\Omega_i^{M}$, therefore $V^\eps>0$ on $\Omega_i^{M}$. Therefore, for all $X>0,Y\in(0,1)$, we have $V^\eps(X,Y)>0$.

The cases where $\theta<0$ or $X<0$ follow very similarly, and we conclude that for all $(X,Y)\in \R \times (0,1)$, we have that $V^\eps(X,Y)$ and $\theta X$ share a sign.
\end{proof}
\end{prop}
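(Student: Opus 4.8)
The plan is a maximum principle argument on truncated strips, using that $V^\eps$ is harmonic away from the interface together with the sign of $\eta^\eps_X$ from Proposition~\ref{thm:SignOfetax}. By the reversibility $V^\eps(-X,Y) = -V^\eps(X,Y)$ and the vertical reflection \eqref{eqn:reflecting in Y} (which reverses the sign of $\theta$), it suffices to treat $\theta > 0$ and $X > 0$ and to show $V^\eps > 0$ on $\Omega_0 \cap \{X > 0\}$ and on $\Omega_1 \cap \{X > 0\}$; the identity $V^\eps(0,Y) = 0$ is immediate from oddness in $X$.

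First I would record the boundary data on the domains $\Omega_i^M \colonequals \Omega_i \cap \{0 < X < M\}$, on each of which $V^\eps$ is harmonic. On the rigid walls $Y = 0,1$ and on the face $X = 0$ we have $V^\eps = 0$; on the interface $Y = h + \eta^\eps(X)$ the kinematic condition \eqref{eqn:stream:kinint} gives $V^\eps = \eta^\eps_X\, U^\eps$, which is strictly positive for $X > 0$ because $U^\eps(X, h+\eta^\eps(X)) = h(1-h) + \mathcal{O}(\eps) > 0$ by \eqref{eqn:signUatinterface} while $\eta^\eps_X > 0$ there by Proposition~\ref{thm:SignOfetax}. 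The one uncontrolled face is the artificial truncation $X = M$; there I would instead invoke that $\lVert V^\eps(X,\cdot)\rVert_{H^1((0,1))} \to 0$ as $X \to \infty$ (Corollary~\ref{thm:cors about UVeta}), which together with the embedding $H^1((0,1)) \hookrightarrow C^0([0,1])$ makes $\sup_Y |V^\eps(M,Y)|$ arbitrarily small for $M$ large.

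The core step is then a contradiction argument. Suppose $V^\eps(\hat X,\hat Y) < 0$ at some point with $\hat X > 0$, say in $\Omega_0$. Choose $M > \hat X$ so large that $\sup_Y |V^\eps(M,Y)| < \tfrac12|V^\eps(\hat X,\hat Y)|$. Then on every part of $\partial\Omega_0^M$ the value of $V^\eps$ exceeds $V^\eps(\hat X,\hat Y)$, so the minimum of $V^\eps$ over $\overline{\Omega_0^M}$ is attained at an interior point, contradicting the strong maximum principle for harmonic functions. Hence $V^\eps \ge 0$ on $\overline{\Omega_0^M}$, and since $V^\eps$ is harmonic and nonconstant the strong maximum principle upgrades this to $V^\eps > 0$ in $\Omega_0^M$; as $M$ is arbitrary, $V^\eps > 0$ on $\Omega_0 \cap \{X > 0\}$, and likewise on $\Omega_1 \cap \{X > 0\}$. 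The remaining sign combinations follow from the two symmetries noted above, giving that $V^\eps(X,Y)$ and $\theta X$ share a sign on $\R \times (0,1)$. The main subtlety is the truncation face $X = M$: no crude sign bound is available there, so one must use the homoclinic $H^1$-decay instead; and one must apply the maximum principle separately on $\Omega_0^M$ and $\Omega_1^M$ — where $V^\eps$ is genuinely harmonic — rather than on the whole strip, across whose interface $V^\eps$ is only continuous.
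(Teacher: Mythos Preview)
Your proposal is correct and essentially identical to the paper's proof: both truncate to $\Omega_i^M$, apply the strong maximum principle to the harmonic $V^\eps$, use the kinematic boundary condition together with Proposition~\ref{thm:SignOfetax} and \eqref{eqn:signUatinterface} for the sign on the interface, and handle the artificial face $X=M$ by a contradiction argument exploiting the homoclinic $H^1$-decay from Corollary~\ref{thm:cors about UVeta}. Your reductions via reversibility and the vertical reflection are valid and simply make explicit what the paper abbreviates as ``the other cases follow very similarly.''
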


We are now in a position to start describing the critical layers and stagnation points of the flow. Recall the critical layer of the solution is the set on which $U^\eps=0$.
We start by identifying the stagnation points in the trivial solutions. If $\omega_0=1-h$, then the trivial solution stagnates at $y=0$ and at $y=1$. 
Otherwise the trivial solution stagnates in the upper layer at $y=h-h(1-h)/\omega_1$ , since recall we are now assuming $\omega_0 \leq 1-h$.

We now investigate the stagnation points of the non-trivial solutions. We separate into two cases, which occupy different regions of parameter space. It  turns out that the critical layers are qualitatively very different; in the first case they are unbounded, and in the second, bounded.

\subsection{Unbounded critical layer}\label{sec:Unbddstagnation}

We first assume that $1-h > \omega_0$, i.e., that stagnation occurs in $\Omega_1$, rather than on the upper or lower boundaries. 
Notice that if we also have $1-2h>\omega_0$, then we are outside the $\omega = \gamma(\Psi)$ regime.

We now show there is a critical layer, and investigate some of its properties.
\begin{lem}
    Suppose $1-h>\omega_0$. Given $\eps$, there exists a unique function $Y_*(X)$ such that $U^\eps(X,Y_*(X))=0$. This function $Y_*$ is analytic.
\begin{proof}
First notice that in $\Omega_0$, we have
\begin{align*}
    U^\eps(X,Y)&=\omega_0(Y-h)+h(1-h)+\mathcal{O}(\eps)
    >h\min(1-h,1-h-\omega_0)+\mathcal{O}(\eps)
    >0,
\end{align*}
so any zeros of $U^\eps$ must be in $\Omega_1$. 
Notice for all $X$, we have 
\begin{align*}
    U^\eps(X,1)&=\omega_1(1-h)+\mathcal{O}(\eps)<0,
\end{align*}
and \eqref{eqn:signUatinterface} tell us that
\[ U^\eps(X,h+\eta^\eps(X))>0.\]
Therefore, by the intermediate value theorem, for all $X$, there exists $Y_* \in (h+\eta^\eps(X),1)$ such that $U^\eps(X,Y_*)=0$.

We now show this $Y_*$ is unique. By \eqref{eqn:stream:lap}, in $\Omega_1$, 
\begin{align*}
    U^\eps_Y&=\omega_1+V^\eps_X
    \omega_1+\mathcal{O}(\eps^2)
    <0.
\end{align*}
This implies that for each $X$, $U^\eps$ is a strictly decreasing function of $Y$, so in fact, for every $X$, there exists a unique $Y_*(X) \in (h+\eta^\eps(X),1)$ such that $U^\eps(X,Y_*)=0$. Since $U^\eps$ is an analytic function, and $U^\eps_Y(X,Y_*(X)) \neq 0$, we can apply the analytic implicit function theorem to see that $Y_*$ is an analytic function of $X$. 
\end{proof}
\end{lem}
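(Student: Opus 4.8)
The plan is to localise everything in the upper layer $\Omega_1$ and reduce to the leading-order description of $U^\eps$ supplied by Theorem~\ref{thm:main} (equivalently Proposition~\ref{thm:uvetaapprox}), which gives $U^\eps(X,Y)=\omega(Y-h)+h(1-h)+\mathcal{O}(\eps)$ uniformly in $(X,Y)$, with $\omega=\omega_0$ in $\Omega_0$ and $\omega=\omega_1$ in $\Omega_1$. First I would rule out zeros in the lower layer: for $0\le Y\le h$ one has $\omega_0(Y-h)+h(1-h)\ge h\min\{1-h,\,1-h-\omega_0\}>0$ since $\omega_0<1-h$, so for $\eps$ small $U^\eps>0$ throughout $\overline{\Omega_0}$ and every zero of $U^\eps$ must lie in $\Omega_1$. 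It is worth recording at this point that $\omega_0<1-h$ forces $\omega_1=\omega_0-1<-h<0$, which is used in the remaining steps.

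Next I would establish existence and uniqueness of $Y_*(X)$ by a monotonicity-plus-intermediate-value argument on each vertical slice of $\Omega_1$. Strict monotonicity comes from \eqref{eqn:stream:lap}: in $\Omega_1$ we have $U^\eps_Y=\omega_1+V^\eps_X=\omega_1+\mathcal{O}(\eps^2)<0$ for $\eps$ small, using the estimate $V^\eps_X=\mathcal{O}(\eps^2)$ from Corollary~\ref{thm:cors about UVeta}. The boundary values of the slice have opposite signs: $U^\eps(X,h+\eta^\eps(X))=h(1-h)+\mathcal{O}(\eps)>0$ by \eqref{eqn:signUatinterface}, while $U^\eps(X,1)=(1-h)(\omega_1+h)+\mathcal{O}(\eps)<0$ because $\omega_1<-h$. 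The intermediate value theorem then produces a zero in $(h+\eta^\eps(X),1)$, and strict monotonicity in $Y$ makes it unique; call it $Y_*(X)$.

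For analyticity I would invoke the analytic implicit function theorem: $U^\eps$ is real-analytic on $\Omega_1$ (established earlier from elliptic regularity), it vanishes along the graph $Y=Y_*(X)$, and $\partial_Y U^\eps(X,Y_*(X))=\omega_1+\mathcal{O}(\eps^2)\ne 0$, so $Y_*$ is analytic in $X$.

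I do not expect a genuine obstacle here; the argument is entirely driven by the uniform $\mathcal{O}(\eps)$ control from Theorem~\ref{thm:main} together with the hypothesis $\omega_0<1-h$. The only points needing care are ensuring the error terms in the expansion of $U^\eps$ and of $V^\eps_X$ are uniform in $(X,Y)$, so that the threshold on $\eps$ can be taken independent of $X$, and checking that the sign information persists on the $X$-dependent upper boundary $Y=h+\eta^\eps(X)$ of $\Omega_1$; both are immediate from the $C^0$ estimates already proved.
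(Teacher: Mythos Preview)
Your proposal is correct and follows essentially the same approach as the paper: rule out zeros in $\Omega_0$ via the leading-order expansion, use the sign change between the interface and the top wall together with the intermediate value theorem for existence, invoke $U^\eps_Y=\omega_1+V^\eps_X=\omega_1+\mathcal O(\eps^2)<0$ for uniqueness, and conclude analyticity from the analytic implicit function theorem. The only cosmetic difference is that you establish monotonicity before existence and write the top-wall value as $(1-h)(\omega_1+h)$ rather than $\omega_1(1-h)$; your expression is in fact the more accurate one, since it retains the $h(1-h)$ contribution from $U^*$.
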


\begin{rk}
    Solving \eqref{eqn:coords:u}, we see that if  $U^\eps(X,Y)=0$ for some $(X,Y) \in \Omega_1$, then $y^\eps(X,Y)=h-h(1-h)/\omega_1+\mathcal{O}(\eps)$. 
    By smoothness of the coordinate transformations, we see that $Y=h-h(1-h)/\omega_1+\mathcal{O}(\eps)$, or in other words, the distance between the critical layer of our solution, and of the background shear flow is of order $\eps$, uniformly in $X$. In particular, for $\omega_0<1-h$, the critical layer $Y_*$ does not touch the boundary $Y=1$ or the interface.
\end{rk}

We can deduce from this lemma that we have a stagnation point at $(0,Y_*(0))$, and will now examine its behaviour.

\begin{thm}\label{thm:unbddStagnation}
    Suppose $1-h>\omega_0$. The flow given by $(U^\eps,V^\eps)$ has a unique stagnation point at $(0,Y_*(0))$, which is a centre if $\theta>0$, and a saddle if $\theta <0$.
\begin{proof}
We know that $U^\eps(0,Y)= 0$ if and only if $Y = Y_*(0)$, and that for $Y\neq 0,1$, the vertical velocity $V^\eps(X,Y)=0$ if and only if $X = 0$ by Propostition~\ref{thm:signOnV}. Therefore
we have a unique stagnation point at $(0,Y_*(0))$. We now consider the nature of this stagnation point.
Let $s,s'$ be the eigenvalues of the derivative of $(U^\eps,V^\eps)$ at a stagnation point. 
We know from incompressibility, and the fact we have real solutions that $s+s'=0$, and $ss'\in \R$. 
Hence, if the determinant of the derivative of $(U^\eps,V^\eps)$ is positive, $s$ and $s'$ are both purely imaginary, and we have a centre.
If the determinant is negative, they are both real, and we have a saddle. 
Notice that by evenness of $U^\eps$,  we have $U^\eps_X(0,Y) =0$ for all $Y$.
Using this and \eqref{eqn:stream:lap}, we have
\begin{align*}
    \det(D(U^\eps,V^\eps)(0,Y)) &= U^\eps_X V^\eps_Y - U^\eps_Y V^\eps_X
    =-V^\eps_X(\omega_1 + V^\eps_X).
\end{align*}

We know that in $(0,\infty)\times(0,1)$, we have $\theta V^\eps>0$, that $V^\eps(0,Y)=0$, and that $V^\eps$ is harmonic, so we can apply the Hopf lemma to deduce that for all $Y \in (0,1)$, we have $\theta V^\eps_X(0,Y)>0$. We also see from Conclusion~\ref{thm:cors about UVeta:small} of Corollary~\ref{thm:cors about UVeta} that
\[\omega_1 + V^\eps_X=\omega_1+\mathcal O (\eps^2)<0,\]
so therefore
\begin{align*}
    \theta \det(D(U,V)(0,Y)))>0,
\end{align*} 
and in particular, this is true at $Y=Y_*(0)$, the stagnation point.
In other words, if $\theta>0$, we have a centre, if $\theta<0$, we have a saddle.
\end{proof}
\end{thm}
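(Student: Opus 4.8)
The plan is to handle uniqueness of the stagnation point and its local type separately. A stagnation point is a point where $U^\eps$ and $V^\eps$ vanish simultaneously, so I would first intersect their zero sets. The preceding lemma already gives $U^\eps>0$ on all of $\overline{\Omega_0}$, gives $U^\eps(X,1)<0$, and produces for every $X$ a unique $Y_*(X)\in(h+\eta^\eps(X),1)$ with $U^\eps(X,Y_*(X))=0$; Proposition~\ref{thm:signOnV} gives that $V^\eps(X,Y)\neq0$ for $Y\in(0,1)$ as soon as $X\neq0$, while $V^\eps$ vanishes on the walls $Y=0,1$, where $U^\eps\neq0$. Intersecting these, the only stagnation point is $(0,Y_*(0))$.

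For the nature of this equilibrium I would examine the eigenvalues $s,-s$ of the Jacobian $D(U^\eps,V^\eps)$ at $(0,Y_*(0))$: the trace vanishes by incompressibility \eqref{eqn:stream:divfree}, and since the matrix is real, $\det D(U^\eps,V^\eps)=-s^2$ is real. Thus $\det>0$ means $s$ is purely imaginary (a centre) and $\det<0$ means $s$ is real and nonzero (a saddle), so the theorem reduces to showing $\theta\,\det D(U^\eps,V^\eps)(0,Y_*(0))>0$.

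To compute the determinant on $\{X=0\}$ I would combine two facts: $U^\eps$ is even in $X$ by Corollary~\ref{thm:cors about UVeta}, so $U^\eps_X(0,Y)=0$; and the vorticity equation \eqref{eqn:stream:lap} in $\Omega_1$ reads $U^\eps_Y=\omega_1+V^\eps_X$. Since $Y_*(0)$ is interior to $\Omega_1$ (by the remark after the critical-layer lemma, $Y_*$ avoids both the interface and $Y=1$ when $\omega_0<1-h$, which is what lets me use the single value $\omega=\omega_1$), these give
\[
\det D(U^\eps,V^\eps)(0,Y_*(0))=U^\eps_X V^\eps_Y-U^\eps_Y V^\eps_X=-V^\eps_X\,(\omega_1+V^\eps_X).
\]
As $V^\eps_X=\mathcal{O}(\eps^2)$ by conclusion~\ref{thm:cors about UVeta:small} of Corollary~\ref{thm:cors about UVeta} and $\omega_1=\omega_0-1<-h<0$, the factor $\omega_1+V^\eps_X$ is strictly negative for $\eps$ small, so the determinant has the same sign as $V^\eps_X(0,Y_*(0))$.

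The step that will need the most care is pinning down the sign of $V^\eps_X$ on $\{X=0\}$, and here I would invoke the Hopf boundary point lemma: $V^\eps$ is harmonic in each $\Omega_i$, vanishes identically on $\{X=0\}$ by oddness, and satisfies $\theta V^\eps>0$ on $(0,\infty)\times(0,1)$ by Proposition~\ref{thm:signOnV}; applying Hopf at a boundary point $(0,Y)$ of the half-strip $\Omega_i\cap\{X>0\}$ then forces $\theta V^\eps_X(0,Y)>0$ for $Y\in(0,1)$. Feeding this in, $\theta\,\det D(U^\eps,V^\eps)(0,Y_*(0))$ is a product of the negative quantities $-\theta V^\eps_X$ and $\omega_1+V^\eps_X$, hence positive, which is exactly the claim: a centre for $\theta>0$ and a saddle for $\theta<0$. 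Beyond the Hopf argument the only obstacles are bookkeeping ones — confirming the interior-ball condition at the relevant boundary points (the junction of $\{X=0\}$ with the interface being the awkward one, but it is not the stagnation point, so it can be sidestepped) and confirming $(0,Y_*(0))\in\Omega_1$ so that \eqref{eqn:stream:lap} may legitimately be used with $\omega=\omega_1$.
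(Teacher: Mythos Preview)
Your proposal is correct and follows essentially the same route as the paper: uniqueness via the preceding lemma on $U^\eps$ together with Proposition~\ref{thm:signOnV} on $V^\eps$, then the type of the stagnation point via the sign of the Jacobian determinant, computed as $-V^\eps_X(\omega_1+V^\eps_X)$ using evenness of $U^\eps$ and \eqref{eqn:stream:lap}, with the sign of $V^\eps_X(0,Y)$ obtained from Hopf's lemma and the smallness of $V^\eps_X$ from Corollary~\ref{thm:cors about UVeta}. Your additional bookkeeping remarks (checking $(0,Y_*(0))\in\Omega_1$, the interior-ball condition, and that $U^\eps\neq0$ on the walls) are all valid and slightly more explicit than the paper's version.
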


\subsection{Bounded critical layer}\label{sec:bddstagnation}

We now consider the case $\omega_0=1-h$, $\omega_0-\omega_1=1$. This implies $\theta=2h-1$.
Notice that it is sufficient to consider cases when $h\neq \frac 12$, as $\theta=0 \text{ if and only if } h=\frac 12$.
Notice also, that we can reflect in $Y$, as we did in \eqref{eqn:reflecting in Y}, to insist that $h> \frac 12$. This will imply that $\theta>0$.

We will now show that the critical layer is bounded, and ends where it intersects with the upper boundary.

\begin{thm}\label{thm:bddStagnation}
    Suppose $\omega_0=1-h$, $\omega_1=-h$, $\frac 12 < h < 1$. There exists $X_*>0$ such that the set on which $U^\eps=0$ is given by a the graph of an analytic function $Y_* \colon [-X_*,X_*] \to [0,1]$, satisfying $Y_*(X_*)=Y_*(-X_*)=1$. Let $R$ be the region
    \begin{equation}\label{eqn:define R}
        R=\{(X,Y) \mid |X|<X_*, \ Y_*(X)<Y<1\}.
    \end{equation}
    Then $U^\eps<0$ in $R$, and $U^\eps>0$ outside $\overline{R}$.
    Furthermore, the flow given by $(U^\eps,V^\eps)$ has three stagnation points: a centre located at $(0,Y_*(0))$, and saddle points at $(\pm X_*,1)$.
\begin{proof}

We first see that $U^\eps(0,1)<0$, and $U^\eps(0,0)>0$, by observing that 
\begin{align*}
    U^\eps(0,1)&=y_Y(0,1)(u^\eps(0,1)+\omega_1(1-h)+h(1-h)+\eps)\\
    &=y_Y(0,1)(u^\eps(0,1)+\eps).
\end{align*}
Using \eqref{eqn:yderivatives} and \eqref{eqn:formOfSolnOnCMTbig}, we see that
\begin{align*}
    U^\eps(0,1) &=(1+\mathcal{O}(\eps))(a^\eps(0)u_*(1)+\eps+\mathcal O(\eps^2)),
\end{align*}
and from \eqref{eqn:ustar} and \eqref{eqn:RescaledCoords}, we have
\begin{align*}
    U^\eps(0,1)&=(1+\mathcal{O}(\eps))\left(\frac{2h\eps}{\theta}\tilde{a}^\eps(0)+\eps+\mathcal O(\eps^2)\right)\\    &
    =\frac{-3h\eps}{\theta}+\eps+\mathcal O(\eps^2)
    =\frac{-1-h}{2h-1}\eps+\mathcal O(\eps^2)
    <0.
\end{align*}
A very similar argument shows that $U^\eps(0,0)>0$.

Given \eqref{eqn:coords:u} and the limiting behaviour of $u^\eps$ in Proposition~\ref{thm:uvetaapprox}, we see that if $|X|$ is sufficiently large, we have for all $Y$ that
\begin{equation}\label{eqn:bddCaseUeventuallyPositive}
\begin{aligned}
    U^\eps(X,Y)&=y_Y(\omega(y-h)+h(1-h)+\eps + u^\eps(x,y))
    \geq y_Y(\eps + u^\eps(x,y))
    >0.
\end{aligned}    
\end{equation}
Note, this is not true in the unbounded critical layer case. 
We conclude that by the intermediate value theorem, and evenness of $U^\eps$, there exists $X_*>0$ such that 
\[U^\eps(X_*,1)=U^\eps(-X_*,1)=0.\]
Using  \eqref{eqn:stream:kintop}, Proposition~\ref{thm:signOnV}, and the Hopf lemma, we see that for $X \neq 0$,
\[ X V^\eps_Y(X,1)<0, \]
therefore applying \eqref{eqn:stream:divfree} gives us that $XU^\eps_X(X,1)>0$, so $X_*$ is the only positive solution of $U^\eps(X,1)=0$. We also see that if $|X|>X_*$, we have $U^\eps(X,1)>0$, and if $|X|<X_*$, we have $U^\eps(X,1)<0$.

We now note that for all $X \in (-X_*,X_*)$, we have $U^\eps(X,1)<0<U^\eps(X,h+\eta(X))$, therefore there exists $Y_*(X) \in (h+\eta^\eps(X),1)$ such that
$U^\eps(X,Y_*(X))=0$.
We see from \eqref{eqn:stream:lap} and the fact that $\omega_1<0$ that in $\Omega_1$,
\[U^\eps_Y(X,Y)=\omega_1+V^\eps_X(X,Y)=\omega_1+\mathcal O (\eps^2)<0, \]
and similarly in $\Omega_0$, we have $U^\eps_Y(X,Y)>0$. Therefore, since $U^\eps_Y(X,0)>0$, for each $X$ this $Y_*(X)$ is unique. 
We can apply the analytic implicit function theorem to conclude that $Y_*(X)$ is an analytic function.
Next, we seek to show that $Y_*(X) \to 1$ as $X \to X_*$ and as $X \to -X_*$. 
Notice that
\begin{align*}
    \begin{pmatrix}
      -1\\
      -1
    \end{pmatrix} \cdot \nabla U^\eps(X_*,1) &=-U^\eps_X(X_*,1)-U^\eps_Y(X_*,1)
    =-\omega_1+\mathcal O (\eps^{\frac 32})
    >0.
\end{align*}
Thus, there exists $\delta>0$ such that for all $t \in (0,\delta)$, we have $U^\eps(X_*-t,1)<0<U^\eps(X_*-t,1-t)$, therefore
$1-t<Y_*(X_*-t)<1$, so we are done.

Define the bounded region $R$ as in \eqref{eqn:define R}.
We know that $U^\eps(X,Y_*(X))=0$, and that $U(X,1)<0$ for $|X|<X_*$.
Therefore we can apply the maximum principle to show that in $R$, we have $U^\eps<0$.
We can also show that $U^\eps>0$ outside $\overline{R}$. 
To do this we first recall that we deduced in \eqref{eqn:bddCaseUeventuallyPositive} that there exists $M>X_*$ such that for all $X$ with $|X|\geq M$, and all $Y$, we have $U^\eps(X,Y)>0$.

For $X \in (-M,M)$, we can apply the Hopf lemma to $V^\eps$ at $(X,0)$, and deduce with \eqref{eqn:stream:divfree} that $XU^\eps_X(X,0)>0$. We already showed at the very start of this proof that $U^\eps(0,0)>0$, and so we see that for all $X$, $U^\eps(X,0)>0$. Therefore we apply the maximum principle to $U^\eps$ on $((-M,M) \times (0,1)) \setminus \overline{R}$, and conclude that on this set too, $U^\eps>0$.

Finally, we discuss the stagnation point. We deduce from what we have just discussed that there is a stagnation point at $(0,Y_*(0))$, and that there are no others away from the boundaries, and apply the same arguments as in Theorem~\ref{thm:unbddStagnation} to conclude that since the stagnation happens in $\Omega_1$, and $\theta>0$, we have a centre.
\end{proof}
\end{thm}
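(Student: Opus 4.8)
The plan is the following. This is the degenerate case $\omega_0=1-h$, $\omega_1=-h$, and after the reflection \eqref{eqn:reflecting in Y}, which we are free to perform, we may assume $h>\tfrac12$, so that $\theta=2h-1>0$. What distinguishes it from Section~\ref{sec:Unbddstagnation} is that the background shear $U^*$ stagnates on the wall $Y=1$ rather than strictly inside $\Omega_1$, so the critical layer of the perturbed flow will be bounded and terminate at $Y=1$. I would split the argument into three parts: (i) determine the sign of $U^\eps$ at a handful of strategic points, thereby locating the wall stagnation points $(\pm X_*,1)$; (ii) use the monotonicity of $U^\eps$ in $Y$ together with scalar maximum principles to describe the zero set of $U^\eps$ precisely, giving the analytic critical curve $Y_*$ and the sign of $U^\eps$ inside and outside $R$; (iii) enumerate the stagnation points and linearise at each.

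For part (i), evaluate the change of variables \eqref{eqn:coords:u} against the leading-order expansions of Proposition~\ref{thm:uvetaapprox}. Since $\omega_1=-h$, the affine part $\omega_1(y-h)+h(1-h)$ vanishes at $y=1$, leaving $U^\eps(0,1)=y_Y(0,1)\bigl(\eps+u^\eps(0,1)\bigr)+\mathcal O(\eps^2)$; inserting $a^\eps(0)=-3\eps/\theta+\mathcal O(\eps^2)$ and $u_*(1)$ from \eqref{eqn:ustar} yields $U^\eps(0,1)=-\tfrac{1+h}{2h-1}\eps+\mathcal O(\eps^2)<0$, and the same computation at $y=0$ gives $U^\eps(0,0)>0$. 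On the other hand, for $|X|$ large the remainder $u^\eps$ is uniformly small and $\omega(y-h)+h(1-h)\ge0$ in \emph{both} layers, so $U^\eps\ge y_Y(\eps+u^\eps)>0$ on $\{|X|\ge M\}$ for some $M>0$; this is the contrast with the unbounded case. By evenness in $X$ and the intermediate value theorem there is a smallest $X_*\in(0,M)$ with $U^\eps(\pm X_*,1)=0$. In $\Omega_1$ we have $U^\eps_Y=\omega_1+V^\eps_X=-h+\mathcal O(\eps^2)<0$ (and $U^\eps_Y>0$ in $\Omega_0$), and applying the Hopf lemma to the harmonic function $V^\eps$ along $Y=1$, using Proposition~\ref{thm:signOnV} and $V^\eps\equiv0$ there, together with incompressibility \eqref{eqn:stream:divfree}, gives $X\,U^\eps_X(X,1)>0$; hence $X_*$ is the unique positive zero of $U^\eps(\cdot,1)$, with $U^\eps(X,1)<0$ for $|X|<X_*$ and $>0$ for $|X|>X_*$.

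For part (ii), fix $|X|<X_*$: since $U^\eps$ is positive on the interface by \eqref{eqn:signUatinterface} and negative at $Y=1$, the intermediate value theorem gives $Y_*(X)\in(h+\eta^\eps(X),1)$ with $U^\eps(X,Y_*(X))=0$; uniqueness follows from $U^\eps_Y<0$ throughout $\Omega_1$, and the analytic implicit function theorem (valid since $U^\eps$ is analytic off the interface and $U^\eps_Y(X,Y_*(X))\ne0$) gives analyticity of $Y_*$. That $Y_*(X)\to1$ as $X\to\pm X_*$ I would obtain by computing the directional derivative of $U^\eps$ at the corner $(X_*,1)$ along $(-1,-1)$: it equals $-U^\eps_X(X_*,1)-U^\eps_Y(X_*,1)=-\omega_1+\mathcal O(\eps^{3/2})=h+\mathcal O(\eps^{3/2})>0$, so for small $t>0$, $U^\eps(X_*-t,1-t)>0>U^\eps(X_*-t,1)$, forcing $Y_*(X_*-t)\in(1-t,1)$. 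Since $Y_*(X)>h+\eta^\eps(X)$, the region $R$ of \eqref{eqn:define R} lies in $\Omega_1$, so $U^\eps$ is harmonic there and $\le0$ on $\partial R$ (zero on the graph of $Y_*$, negative on the top segment), whence the strong maximum principle forces $U^\eps<0$ in $R$. Outside $\overline R$ one argues layer by layer: first $U^\eps(X,0)>0$ for all $X$ (from the leading-order form, using $a^\eps\le\mathcal O(\eps^2)$, or via the Hopf lemma for $V^\eps$ on $Y=0$ with $U^\eps(0,0)>0$); then on $\Omega_0\cap\{|X|<M\}$ all boundary data — bottom, the two sides, and the interface — are positive, so $U^\eps>0$ there by the minimum principle; finally on $\bigl(\Omega_1\cap\{|X|<M\}\bigr)\setminus\overline R$ the boundary data are the interface and sides (positive), the top for $|X|\ge X_*$ (nonnegative), and the critical curve (zero), so the strong minimum principle gives $U^\eps>0$ there as well.

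For part (iii), Proposition~\ref{thm:signOnV} shows $V^\eps$ vanishes only on $\{X=0\}\cup\{Y\in\{0,1\}\}$, and combining this with part (ii) — where $U^\eps(0,\cdot)$ vanishes only at $Y_*(0)$, $U^\eps>0$ on $Y=0$, and $U^\eps(\cdot,1)$ vanishes only at $\pm X_*$ — the stagnation points are exactly $(0,Y_*(0))$ and $(\pm X_*,1)$. At $(0,Y_*(0))\in\Omega_1$ the computation from the proof of Theorem~\ref{thm:unbddStagnation} applies unchanged: by evenness $U^\eps_X(0,Y)=0$, so by \eqref{eqn:stream:lap} the determinant of $D(U^\eps,V^\eps)$ there equals $-V^\eps_X(\omega_1+V^\eps_X)$, which is positive since the Hopf lemma gives $V^\eps_X(0,Y_*(0))>0$ (as $\theta>0$) while $\omega_1+V^\eps_X<0$; the trace vanishes, so the eigenvalues are purely imaginary and we have a centre. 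At $(\pm X_*,1)$, differentiating $V^\eps\equiv0$ along $Y=1$ gives $V^\eps_X(\pm X_*,1)=0$, and \eqref{eqn:stream:lap} gives $U^\eps_Y(\pm X_*,1)=\omega_1=-h$, so
\[
D(U^\eps,V^\eps)(\pm X_*,1)=\begin{pmatrix}-V^\eps_Y & -h\\ 0 & V^\eps_Y\end{pmatrix}(\pm X_*,1),
\]
whose determinant is $-(V^\eps_Y)^2<0$, nonzero because $V^\eps_Y=-U^\eps_X\ne0$ there by part (i); the eigenvalues are therefore real and opposite, so these are saddle points. I expect the hardest step to be the global sign of $U^\eps$ outside $\overline R$: because $U^\eps$ is harmonic only \emph{within} each layer — its normal derivative jumps across the interface since $\omega_0\ne\omega_1$ — one cannot run a single maximum principle on all of $\{|X|<M\}\setminus\overline R$, and must instead split into the $\Omega_0$ and $\Omega_1$ pieces and glue them using the positivity \eqref{eqn:signUatinterface} of $U^\eps$ on the interface; a secondary nuisance is pinning down the endpoint behaviour $Y_*(\pm X_*)=1$, which hinges on the sign of the corner directional derivative.
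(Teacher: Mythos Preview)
Your proposal is correct and follows essentially the same route as the paper: the same sign computations at $(0,0)$ and $(0,1)$, the same Hopf--incompressibility argument for monotonicity of $U^\eps(\cdot,1)$ and $U^\eps(\cdot,0)$, the same directional-derivative trick at the corner $(X_*,1)$, and the same reduction to Theorem~\ref{thm:unbddStagnation} for the centre.

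In two places you are in fact more careful than the paper. First, the paper applies the maximum principle to $U^\eps$ on all of $((-M,M)\times(0,1))\setminus\overline R$ in one stroke, without commenting on the interface; your layer-by-layer argument, gluing via \eqref{eqn:signUatinterface}, is the honest version of this step and is exactly the right fix for the concern you flag at the end. Second, the paper's proof never actually verifies that $(\pm X_*,1)$ are saddles---it only treats the interior centre---whereas your Jacobian computation at the wall, using $V^\eps_X=0$ and $U^\eps_Y=\omega_1$ there to get determinant $-(U^\eps_X)^2<0$, supplies the missing piece of the theorem statement.
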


We now show that in this region of parameter space, we have a streamline which is attached to the upper layer, as shown in Figure~\ref{fig:solutions in regions of parameter space}.

\begin{prop}\label{thm:attachedStreamline}
    Suppose $\omega_0=1-h$, $\omega_1=-h$, $\frac 12 < h < 1$. Recall $X_*$ from Theorem~\ref{thm:bddStagnation}, the $X$ coordinate of where the critical layer meets the upper boundary. There exists a streamline with endpoints $(-X_*,1)$ and $(X_*,1)$.
\begin{proof}
     We first recall that streamlines are level curves of the stream function $\Psi^\eps$. Pick the stream function such that $\Psi^\eps(-X_*, 1)=0$. 
     For notational convenience, let $\tilde{X} = -U^\eps_{Y}(-X_*, 1)$, $\tilde{Y} = -U^\eps_{X}(-X_*, 1)$. 
     We have that $\tilde{X}, \tilde{Y}>0$.
     Notice that for all sufficiently small $t$,
     \begin{align*}
         \Psi^\eps(-X_*+t \tilde{X}, 1-\tfrac 32 t \tilde{Y})
         &= -\tfrac 32 t^2 \tilde{X} \tilde{Y} U^\eps_{X}(-X_*, 1) + \tfrac 98 t^2 \tilde{Y}^2 U^\eps_{Y}(-X_*, 1) + \mathcal{O}(t^3)\\
         &= \tfrac 32 t^2 \tilde{X} \tilde{Y}^2 - \tfrac 98 t^2 \tilde{X} \tilde{Y}^2 + \mathcal{O}(t^3) >0,
     \end{align*}
     and 
     \begin{align*}
         \Psi^\eps(-X_*+t \tilde{X}, 1-3 t \tilde{Y})
         &= -3 t^2 \tilde{X} \tilde{Y} U^\eps_{X}(X_*, 1) + \tfrac 92 t^2 \tilde{Y}^2 U^\eps_{Y}(X_*, 1) + \mathcal{O}(t^3)\\
         &= 3 t^2 \tilde{X} \tilde{Y}^2 - \tfrac 92 t^2 \tilde{X} \tilde{Y}^2 + \mathcal{O}(t^3) <0.
     \end{align*}
     Therefore, by the intermediate value theorem, for all $t>0$ sufficiently small, there exists $Y_{**} \in (1-3t \tilde{Y}, 1-\tfrac 32 t \tilde{Y}) $ such that $\Psi^\eps(-X_*+t \tilde{X}, Y_{**})=0$.

     We also have that $\Psi^\eps$ is increasing in $Y$, in that for all $s \in ( \tfrac 32 , 3) $, we have 
     \begin{equation}
      \label{eqn:UposNearCritStreamline}
     \begin{aligned}
         \Psi^\eps_Y(-X_*+t \tilde{X}, 1- st \tilde{Y})&=U^\eps(-X_*+t \tilde{X}, 1-st \tilde{Y})\\
         &=t \tilde{X} U^\eps_X(-X_*,1)-st\tilde{Y}  U^\eps_Y(-X_*,1)\\
         &=-t \tilde{X} \tilde{Y}+st\tilde{Y}  \tilde{X}>0.
     \end{aligned}   
     \end{equation}
     Thus, for each $X$, this $Y_{**}$ is unique in $(1-3(X_*+X) \tilde{Y}\tilde{X}^{-1}, 1-\tfrac 32 (X_*+X) \tilde{Y}\tilde{X}^{-1}) $ . We can now apply the implicit function theorem to deduce that we have some small $\delta>0$, and a smooth function $Y_{**}(X)$, defined for $X\in [-X_*,-X_*+ \delta]$, such that $\Psi^\eps(X,Y_{**}(X))=0$, and $Y_{**}(X) \to 1$ as $X \to -X_*$. 
     In other words, $(X,Y_{**}(X))$ gives part of a streamline which touches the upper boundary at $(-X_*,1)$.   
     
    We now seek the rest of the streamline. Consider the motion of a fluid particle starting from $(-X_*+\delta,Y_{**}(-X_*+\delta))$.
    Notice that \eqref{eqn:UposNearCritStreamline} shows that $U^\eps$ is positive along $Y_{**}$, so $Y_{**}$ cannot lie in $R$, the region defined in \eqref{eqn:define R} where $U^\eps<0$, which lies above $Y_*$. Therefore, as $t \to - \infty$, the particle will approach $(-X_*,1)$.
    
    We now show that the particle reaches the line $X=0$, so by the parity of $U^\eps$ and $V^\eps$, as $t \to  \infty$, the particle will approach $(X_*,1)$.
    For $X<0$, we see $U^\eps(X,Y_*(X))=0$, and $V^\eps(X,Y_*(X))>0$. 
    Hence, for $X<0$, fluid particles cannot enter $R$, only leave it. Thus, since the fluid particle started outside $R$, it cannot enter $R$ before crossing the line $X=0$. 
    Therefore, while the fluid particle  is in $(-\infty,0] \times [0,1]$, it cannot travel upwards (using Proposition~\ref{thm:signOnV}), or to the left. Note that the particle cannot approach the centre at $(0,Y_*(0))$, so must remain at least some minimum distance $\mu$ from it at all times. 

    Consider the compact set $K=[-X_*+\delta,0]\times[0,1] \setminus B_\mu((0,Y_*(0)))$. This region has no stagnation points in it, therefore, the fluid flow in this region has some minimum speed. The fluid particle cannot move up or left,  travels with some minimum speed, and cannot pass through the lower boundary, therefore must leave the right hand edge of $K$. It cannot do so by coming within $\mu$ of the centre, therefore it must leave at some point on the line $X=0$. Therefore by the parity of $U^\eps$ and $V^\eps$ we are done.
\end{proof}
\end{prop}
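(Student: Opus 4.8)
The plan is to realise the desired streamline as a connected component of a level set of the stream function $\Psi^\eps$ from Lemma~\ref{lem:psi}, normalised so that $\Psi^\eps(-X_*,1)=0$, and to show that the branch of $\{\Psi^\eps=0\}$ leaving the saddle $(-X_*,1)$ into the fluid returns to the mirror saddle $(X_*,1)$.

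First I would carry out a local analysis at the saddle. Since $(-X_*,1)$ is a stagnation point, $\Psi^\eps$ has a critical point there, with Hessian entries $\Psi^\eps_{XX}=-V^\eps_X$, $\Psi^\eps_{XY}=U^\eps_X$, $\Psi^\eps_{YY}=U^\eps_Y$; by Theorem~\ref{thm:bddStagnation} this Hessian is indefinite, so the saddle is hyperbolic. Setting $\tilde X=-U^\eps_Y(-X_*,1)>0$ and $\tilde Y=-U^\eps_X(-X_*,1)>0$ — positivity coming from $\omega_1<0$ together with the inequality $XU^\eps_X(X,1)>0$ proved inside Theorem~\ref{thm:bddStagnation} — a second-order Taylor expansion along the rays $X=-X_*+t\tilde X$, $Y=1-st\tilde Y$ gives $\Psi^\eps=\tfrac12 t^2\tilde X\tilde Y^2\, s(2-s)+\mathcal O(t^3)$ and $U^\eps=t\tilde X\tilde Y\,(s-1)+\mathcal O(t^2)$. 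Hence $\Psi^\eps$ is positive on the ray $s=\tfrac32$, negative on $s=3$, and strictly increasing in $Y$ in between, so the intermediate value theorem and the analytic implicit function theorem yield a smooth arc $X\mapsto Y_{**}(X)$ on $[-X_*,-X_*+\delta]$ with $\Psi^\eps(X,Y_{**}(X))=0$, $Y_{**}(X)\to1$ as $X\to-X_*^+$, and $U^\eps>0$ along it. This arc is a piece of the branch of the unstable manifold of the saddle that points into $\Omega_1$.

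Next I would continue it globally by tracking the trajectory of a fluid particle released at $(-X_*+\delta,Y_{**}(-X_*+\delta))$; since $\Psi^\eps$ is constant along trajectories, the orbit coincides with a streamline. Backwards in time the orbit is this unstable-manifold branch, so it converges to $(-X_*,1)$. Forwards in time I would show it reaches $X=0$: on $\{X<0\}\times(0,1)$ one has $V^\eps<0$ by Proposition~\ref{thm:signOnV} and $U^\eps>0$ outside $\overline R$ by Theorem~\ref{thm:bddStagnation}, so the particle moves to the right and downward; it cannot cross the invariant walls $Y=0$, $Y=1$ (where $V^\eps=0$); it cannot enter the region $R$ of \eqref{eqn:define R}, since on $\{Y=Y_*(X),\ X<0\}$ the velocity is nonzero and points out of $R$; and it cannot accumulate at the centre $(0,Y_*(0))$, since that is a nondegenerate extremum of $\Psi^\eps$, so $\{\Psi^\eps=0\}$ meets a small ball about it in at most the single point and the orbit stays a distance $\mu>0$ away. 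On the compact set $K=[-X_*+\delta,0]\times[0,1]\setminus B_\mu((0,Y_*(0)))$ there are no stagnation points, hence the speed is bounded below, so a particle that cannot leave $K$ to the left, top, or bottom and stays $\mu$-away from the centre must exit through $X=0$ in finite time. Finally the reversibility symmetry $(X,Y,t)\mapsto(-X,Y,-t)$ identifies the forward orbit from this crossing point with the reflection of the backward orbit, so it converges to $(X_*,1)$, and the closure of the orbit is the required streamline.

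The main obstacle is this global particle-tracking step: one has to exclude every way the orbit might fail to reach $X=0$ — escaping through a horizontal wall, being trapped by $R$, or spiralling towards the centre — which requires combining the sign information for $U^\eps$ and $V^\eps$ from Proposition~\ref{thm:signOnV} and Theorem~\ref{thm:bddStagnation}, the invariance of the walls, and a compactness/uniform-speed argument. The local saddle analysis is comparatively routine, but still needs care because $\Psi^\eps$ has a degenerate critical point there and one must pick out the correct separatrix.
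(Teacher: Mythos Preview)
Your proposal is correct and follows essentially the same approach as the paper: normalise $\Psi^\eps$ to vanish at $(-X_*,1)$, use a second-order Taylor expansion of $\Psi^\eps$ along rays $X=-X_*+t\tilde X$, $Y=1-st\tilde Y$ with the same endpoints $s=\tfrac32$ and $s=3$ to launch the streamline via the intermediate value and implicit function theorems, then continue it by a particle-tracking argument exploiting the sign information on $U^\eps$ and $V^\eps$, the one-sided impermeability of $\partial R$, exclusion of the centre, and a compactness/minimum-speed step on $K=[-X_*+\delta,0]\times[0,1]\setminus B_\mu((0,Y_*(0)))$, before concluding by reversibility. Your compact formula $\Psi^\eps=\tfrac12 t^2\tilde X\tilde Y^2\,s(2-s)+\mathcal O(t^3)$ is a tidy repackaging of the paper's two explicit evaluations, and your stated sign $V^\eps<0$ for $X<0$ (consistent with Proposition~\ref{thm:signOnV} and $\theta>0$) is the correct one for concluding that the velocity on $Y=Y_*(X)$ points out of $R$.
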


\section{Acknowledgements}
KM received partial support through  The Leverhulme Trust  RPG-2020-107. JS recieved support through EPSRC, EP/T518013/1.

\appendix
\section{Proof of Proposition~\ref{thm:aepsbepsExist}}\label{sec:appendix}
\begin{proof} Recall we have fixed $N \geq 4$, and have that $F \in C^N$. Let the terms of the Taylor polynomial of $e_1 \cdot F$ be given by $\mu_{ijk} a^i b^j \eps ^k$, and of $e_2 \cdot F$  by  $ \lambda_{ijk} a^i b^j \eps ^k$. Throughout the following, we have a family of remainder functions, indexed by $j$, such that $r^i_j(\alpha,\beta,\eps)=\mathcal{O}(|\alpha|^i+|\beta|^i+\eps^i)$, and is even in $\beta$.

We know 
\begin{align*}
    a_x=b(1+\mu_{110}a + \mu_{011}\eps + r^2_1(a,b,\eps)).
\end{align*}
Therefore, by the implicit function theorem and implicit differentiation,
\begin{subequations}\label{eqn:bInTermsOfa}
\begin{align}
    \label{eqn:bInTermsOfa:b}b&=a_x(1-\mu_{110}a - \mu_{011}\eps + r^2_2(a,a_x,\eps))\\
    \label{eqn:bInTermsOfa:bx} b_x &= a_{xx}(1+r^1_3(a,a_x,\eps))-\mu_{110}a_x^2 +r^3_4(a,a_x,\eps).
\end{align}
\end{subequations}
Therefore, since we also know that $b_x = \lambda_{101}a \eps + \lambda_{200} a^2 + r^3_5(a,b,\eps)$, we have
\begin{align*}
    a_{xx}=\lambda_{101}a \eps + \lambda_{200} a^2 + \lambda_{020}b^2+\mu_{110}a_x^2+r^3_6(a,a_x,\eps).
\end{align*}
Applying the scaling, we see
\begin{align*}
    \tilde{a}_{\tilde{x}\tilde{x}}=\tilde{a} + \tilde{a}^2+\eps R(\tilde{a},\tilde{a}_{\tilde{x}},\eps).
\end{align*}
Where $R\in C^{N-1}(\R^3)$, and is even in $\tilde{a}_{\tilde{x}}$.

Now we let $z^\eps=\tilde{a}^\eps - \tilde{a}^0$. 
We see that
\begin{align}\label{eqn:ODEforz}
    z^\eps_{\tilde{x}\tilde{x}}=z^\eps + 2 \tilde{a}^0 z^\eps + (z^\eps)^2 + \eps R(\tilde{a}^0 + z^\eps,\tilde{a}^0_{\tilde{x}}  + z^\eps_{\tilde{x}},\eps).
\end{align}
Let $C^n_\bddeven(\mathcal{U})$ be the set of even functions with domain $\mathcal{U}$, and with finite $C^n$ norm. 
We now use the implicit function theorem to show that for $\eps, \lVert z \rVert_{C^2}$ sufficiently small, we have that for each $\eps$, \eqref{eqn:ODEforz} has a unique solution $z^\eps \in C^2_\bddeven$, such that the map $\eps \mapsto z^\eps$ is in $C^{N-1}(\R,C^2(\R))$.
First, we let
\begin{align*}
    K_1 \colon C^0_\bddeven(\R) &\to C^2_\bddeven(\R),\qquad
    f( \, \cdot \, ) \mapsto \int_{-\infty}^\infty -\frac 12 e^{-| \, \cdot \,  - t|} f(t) \; dt.
\end{align*}
This satisfies $K_1(f''-f)=f$, and is bounded from $ C^0_\bddeven $ to $C^2_\bddeven$. If we also define
\begin{align*}
    r \colon C^2_\bddeven(\R) \times \R &\to C^0_\bddeven(\R),\qquad
    (f,\eps)\mapsto f^2 + \eps R(\tilde{a}^0 + f,\tilde{a}^0_{\tilde{x}}  + f_{\tilde{x}},\eps).
\end{align*}
Notice, $r$ does indeed have codomain consisting of even functions, as $\tilde{a}^0$ is even, and $R$ is even in its second argument.
Thus, \eqref{eqn:ODEforz} can be written equivalently as
\begin{align*}
    z^\eps&=2K_1(\tilde{a}^0 z^\eps) + K_1(r(z^\eps,\eps)).
\end{align*}
Let $K_2z=2K_1(\tilde{a}^0z)$. An easy extension of Arzel\`a--Ascoli shows that $L$ is compact from $C^0_\bddeven(\R)$ to $C^0_\bddeven(\R)$. 
Therefore its spectrum is made only of eigenvalues and possibly $0$, which means in particular, if $\ker(K_2-I)=\{0\}$, then $K_2-I$ is continuously invertible from $C^0_\bddeven(\R)$ to $C^0_\bddeven(\R)$.
If $(K_2-I)z=0$, we see $2 \tilde{a}^0 z=z_{\tilde x \tilde x}-z$.
Multiplying by $\tilde{a}^0_{\tilde x}$ then integrating by parts gives $(\tilde{a}^0)^2 z=\tilde{a}^0_{\tilde x} z_{\tilde x}-\tilde{a}^0z$, which can be rearranged to $\tilde{a}^0_{\tilde x\tilde x} z = \tilde{a}^0_{\tilde x} z_{\tilde x}$.
Hence $z$ is a multiple of $\tilde{a}^0_{\tilde x}$. But this is an odd function, therefore $\ker(K_2-I)=\{0\}$ as required.
Therefore $K_2-I$ is continuously invertible from $C^0_\bddeven(\R)$ to $C^0_\bddeven(\R)$, and it is easy to then show that $K_2-I$ is continuously invertible from $C^2_\bddeven(\R)$ to $C^2_\bddeven(\R)$.

We can now write \eqref{eqn:ODEforz} as 
$\Phi(z,\eps)=0$, where 
\begin{align*}
    \Phi \colon C^2_\bddeven(\R) \times \R &\to C^2_\bddeven(\R),\qquad
    (z,\eps)\mapsto z-(I-K_2)^{-1}(K_1(r(z,\eps)))
\end{align*}
is a $C^{N-1}$ function. Taking a Fr\'echet derivative with respect to $z$ at $(0,0)$, we have that \begin{align*}
    D_z \Phi (0,0) &= I -(I-K_2)^{-1}(K_1(D_z r(0,0)))
    = I,
\end{align*}
an isomorphism. Therefore, we have by the implicit function theorem that $z^\eps$ exists for $\eps$ sufficiently small, and that the map $\eps \mapsto \tilde{a}^\eps$ is in $C^{N-1}(\R,C^2(\R))$. Note that in fact, we have $z^\eps$ exists for $\eps$ small and negative, but since our rescaling does not make sense for negative $\eps$, we ignore these solutions.
Applying the rescaling \eqref{eqn:RescaledCoords} to \eqref{eqn:bInTermsOfa:b}, we see that $\tilde b^\eps =\tilde{a}^\eps_{\tilde{x}}(1+\mathcal{O}(\eps))$,
where the $\mathcal{O}$ is with respect to the $C^0$ norm. However, considering the second component of \eqref{eqn:rescaledDirectODE}, we see that $\tilde{b}^\eps_{\tilde x}(\tilde x)$ has $C^N$ dependence on $(\tilde{a}^\eps(\tilde x), \tilde{a}^\eps_{\tilde{x}}(\tilde x)$, and $\eps$, so we see that in fact, 
\begin{align*}
    \tilde b^\eps &=\tilde{a}^\eps_{\tilde{x}}+\mathcal{O}(\eps)
    =\tilde{a}^0_{\tilde{x}}+\mathcal{O}(\eps),
\end{align*}
where the $\mathcal{O}$ is with respect to the $C^1$ norm.
\end{proof}
\bibliographystyle{plain} 
\bibliography{refs}

\end{document}